\numberwithin{equation}{section}
\newcommand{\lab}{\label}
\newcommand{\ben}{\begin{enumerate}}
\newcommand{\een}{\end{enumerate}}
\newcommand{\bea}{\begin{eqnarray}}
\newcommand{\ba}{\begin{array}}
\newcommand{\bean}{\begin{eqnarray*}}
\newcommand{\ea}{\end{array}}
\newcommand{\eea}{\end{eqnarray}}
\newcommand{\eean}{\end{eqnarray*}}
\newcommand{\beq}{\begin{equation}}
\newcommand{\eeq}{\end{equation}}
\newcommand{\bthm}{\begin{thm}}
\newcommand{\ethm}{\end{thm}}
\newcommand{\blem}{\begin{lem}}
\newcommand{\elem}{\end{lem}}
\newcommand{\bprop}{\begin{prop}}
\newcommand{\eprop}{\end{prop}}
\newcommand{\bcor}{\begin{cor}}
\newcommand{\ecor}{\end{cor}}
\newcommand{\bdfn}{\begin{dfn}}
\newcommand{\edfn}{\end{dfn}}
\newcommand{\brem}{\begin{rem}}
\newcommand{\erem}{\end{rem}}
\newcommand{\bpf}{\begin{proof}}
\newcommand{\epf}{\end{proof}}
\newcommand{\bfact}{\begin{fact}}
\newcommand{\efact}{\end{fact}}
\newcommand{\nl}{\newline}
\newtheorem{thm}{Theorem}[section]
\newtheorem{prop}[thm]{Proposition}
\newtheorem{lem}[thm]{Lemma}
\newtheorem{cor}[thm]{Corollary}
\newtheorem{dfn}[thm]{Definition}
\newtheorem{rem}[thm]{Remark}
\newtheorem{fact}[thm]{Fact}
\newtheorem{ex}[thm]{Example}
             \def\cF{\mathcal F}
\def\tf{\tilde{f}}
\def\CV{\text{CV}}
\def\endpf{\qed}
\def\N{{\mathbb N}}                  \def\R{{\mathbb R}}
\def\C{{\mathbb C}}                  \def\oc{\hat \C}
\def\1{1\!\!1}
\def\and{\text{ and }}
\def\Comp{\text{{\rm Comp}}}        \def\diam{\text{\rm {diam}}}
\def\dist{\text{{\rm dist}}}  \def\Dist{\text{{\rm Dist}}}
\def\Crit{\text{{\rm Crit}}}
\def\Sing{\text{Sing}}        
\def\F{{\mathcal F}}
\def\H{\text{{\rm H}}}     \def\HD{\text{{\rm HD}}}   
\def\BD{\text{{\rm BD}}}         \def\PD{\text{{\rm PD}}}
     \def\CP{\text{CP}} \def\PCV{{\rm \text{PCV}}}
  \def\Mod{{\rm Mod}}      \def\const{\text{const}}
         \def\P{\text{{\rm P}}}     
             \def\Ba{\mathcal B}       
\def\L{{\mathcal L}}
\def\a{\alpha}                \def\b{\beta}             \def\d{\delta}
\def\De{\Delta}               \def\e{\varepsilon}          \def\f{\phi}
\def\g{\gamma}                \def\Ga{\Gamma}           \def\l{\lambda}
              \def\om{\omega}           
\def\Sg{\Sigma}               \def\sg{\sigma}
               \def\th{\theta}           
\def\ka{\kappa}
               \def\mh{\tilde{m}_h}
\def\abs{\prec}
\def\bi{\bigcap}              \def\bu{\bigcup}
\def\({\bigl(}                \def\){\bigr)}
\def\lt{\left}                \def\rt{\right}
\def\ld{\ldots}                        \def\^{\tilde}
\def\es{\emptyset}            \def\sms{\setminus}
\def\sbt{\subset}             \def\spt{\supset}
\def\gek{\succeq}             \def\lek{\preceq}
\def\eqv{\Leftrightarrow}     
      \def\imp{\Rightarrow}
\def\comp{\asymp}
           \def\downto{\searrow}
\def\sp{\medskip}             \def\fr{\noindent}        \def\nl{\newline}
\def\ov{\overline}            
           \def\rel{\sim}
\def\ni{\noindent}
\def\om{\omega}
\def\endpf{{\hfill $\square$}}
\def\Fa{\mathcal F}
\begin{document}

\title[]
{\bf\large {\Large M}easures and Dimensions of Julia sets of Semi-hyperbolic   
Rational Semigroups}
\date{February 15, 2011. Published in Discrete and Continuous Dynamical Systems Ser. A, 
Vol. 30, No. 1, 2011, 313--363.}
\author[\sc Hiroki SUMI]{\sc Hiroki SUMI}
%
\author[\sc Mariusz URBA\'NSKI]{\sc Mariusz URBA\'NSKI}
%
%
\thanks{Research of the first author supported in part by 
JSPS Grant-in-Aid for Scientific Research (C) 21540216. 
Research of the second author supported in part by the
NSF Grant DMS 0700831.}
\thanks{\ \newline 
\noindent Hiroki Sumi\newline 
Department of Mathematics,
Graduate School of Science,
Osaka University, 
1-1 Machikaneyama,
Toyonaka,
Osaka, 560-0043, 
Japan\newline 
E-mail: sumi@math.sci.osaka-u.ac.jp\newline 
Web: http://www.math.sci.osaka-u.ac.jp/\textasciitilde sumi/\newline
\ \newline 
Mariusz Urba\'nski\newline Department of Mathematics,
 University of North Texas, Denton, TX 76203-1430, USA\newline  
E-mail: urbanski@unt.edu\newline
Web: http://www.math.unt.edu/\textasciitilde urbanski/}

\keywords{Complex dynamical systems, rational semigroups, semi-hyperbolic semigroups, 
Julia set, Hausdorff dimension, conformal measure, skew product.}

\begin{abstract}
We consider the dynamics of semi-hyperbolic semigroups generated by 
finitely many rational maps on the Riemann sphere. Assuming that the nice open set condition holds
it is proved that there exists a geometric measure on the Julia set with exponent
$h$ equal to the Hausdorff dimension of the Julia set. Both $h$-dimensional Hausdorff 
and packing measures are finite and positive on the Julia set and are mutually equivalent
with Radon-Nikodym derivatives uniformly separated from zero and infinity. All three fractal
dimensions, Hausdorff, packing and box counting are equal. It is also proved that for
the canonically associated skew-product map there exists a unique $h$-conformal measure.
Furthermore, it is shown that this conformal measure admits a unique Borel probability absolutely
continuous invariant (under the skew-product map) measure. In fact these two measures are 
equivalent, and the invariant measure is metrically exact, hence ergodic.
\end{abstract}
\maketitle
 Mathematics Subject Classification (2001). Primary 37F35; 
Secondary 37F15.

\section{Introduction}

In this paper, we frequently use the notation from \cite{hiroki1}. 
A ``rational semigroup" $G$ is a semigroup generated by a family of non-constant 
rational maps $g:\oc \rightarrow \oc$,\ where $\oc $ denotes the 
Riemann sphere,\ with the semigroup operation being functional 
composition. For a rational semigroup $G$,\ we set 
$$
F(G):=\{ z\in \oc \mid G \mbox{ is normal in a neighborhood of } z\} 
$$ 
and 
$$
J(G):=\oc \setminus F(G).
$$ 
$F(G)$ is called the Fatou set of $G$ and $J(G)$ is called the 
Julia set of $G.$ If $G$ is generated by a family $\{ f_{i}\} _{i}$,\ 
 then we write $G=\langle f_{1},f_{2},\ldots\! \rangle .$ 

The work on the dynamics of rational semigroups was initiated by 
Hinkkanen and Martin (\cite{HM}), who were interested 
in the role of the dynamics of polynomial 
semigroups while studying various one-complex-dimensional moduli spaces 
for discrete groups, and by 
F. Ren's group (\cite{ZR}), who studied such semigroups 
from the perspective of random complex dynamics.  
The theory of the dynamics of rational semigroups on $\oc $ 
has developed in many directions since the 1990s (\cite{HM, ZR, HM2,St1, St2, 
St3, SSS, sumihyp1, sumihyp2, 
hiroki1, hiroki2, hiroki3, hiroki4, hirokidc, sumikokyuroku, SU1, SdpbpI, SdpbpII, SdpbpIII, hiroki5, 
StSu, hiroki6}). 

Since the Julia set $J(G)$ of a rational semigroup $G$ 
generated by finitely many elements $f_{1},\ldots\!\!,f_{u}$ has   
{\bf backward self-similarity}, i.e., 
\begin{equation}
\label{eq:bss} 
J(G)=f_{1}^{-1}(J(G))\cup \cdots \cup f_{u}^{-1}(J(G))
\end{equation} 
 (see \cite{hiroki1}), 
%
it can be viewed as a significant generalization and extension of 
both, the theory of iteration of rational maps (see \cite{M}),  
and conformal iterated function systems (see \cite{mugdms}). 
For example, the Sierpi\'{n}ski gasket can be regarded as the Julia set 
of a rational semigroup. 
The theory of the dynamics of 
rational semigroups borrows and develops tools 
from both of these theories. It has also developed its own 
unique methods, notably the skew product approach 
(see \cite{hiroki1, hiroki2, hiroki3, hiroki4, SdpbpI, SdpbpII, SdpbpIII, hiroki5, SU1}, and \cite{SU2}). 
We remark that by (\ref{eq:bss}), 
the analysis of the Julia sets of rational semigroups somewhat
resembles 
 ``backward iterated functions systems'', however since each map 
$f_{j}$ is not in general injective (critical points), some 
qualitatively different extra effort in the cases of semigroups is needed.

The theory of the dynamics of rational semigroups is intimately 
related to that of the random dynamics of rational maps. 
For the study of random complex dynamics, the reader may 
consult \cite{FS,Bu1,Bu2,BBR,Br,GQL, MSU}. 
We remark that the complex dynamical systems 
 can be used to describe some mathematical models. For 
 example, the behavior of the population 
 of a certain species can be described as the 
 dynamical system of a polynomial 
 $f(z)= az(1-z)$ 
 such that $f$ preserves the unit interval and 
 the postcritical set in the plane is bounded 
 (cf. \cite{D}). From this point of view, 
 it is very important to consider the random 
 dynamics of polynomials.  
For the random dynamics of polynomials on the unit interval, 
see \cite{Steins}. 

The deep relation between these fields 
(rational semigroups, random complex dynamics, and (backward) IFS) 
is explained in detail in the subsequent papers
 (\cite{hirokidc, sumikokyuroku, SdpbpI, SdpbpII, SdpbpIII, hiroki5, hiroki6, hiroki7,hiroki2010prep}) of the first author.

In this paper, we investigate the Hausdorff, packing, and box dimension 
of the Julia sets of semi-hyperbolic rational semigroups 
$G=\langle f_{1},\ldots ,f_{u}\rangle $ satisfying the nice open set 
condition.  We will show that these dimensions coincide, 
that $0<\mbox{H}^{h}(J(G)),\mbox{P}^{h}(J(G))<\infty $, 
where $h$ is the Hausdorff dimension of $J(G)$ and 
$\mbox{H}^{h}$ (resp. $\mbox{P}^{h}$) denotes the 
$h$-dimensional Hausdorff (resp. packing) measure, that 
$h$ is equal to the critical exponent of the Poincar\'{e} series of 
the semigroup $G$, that there exists a unique $h$-conformal measure 
$\tilde{m}_{h}$ on the Julia set $J(\tilde{f})$ 
of the ``skew product map'' $\tilde{f}$, 
that there exists a unique Borel probability measure $\tilde{\mu }_{h}$ 
on $J(\tilde{f})$ which is absolutely continuous with respect to $\tilde{m}_{h}$, 
and that $\tilde{\mu }_{h}$ is metrically exact and equivalent with $\tilde{m}_{h}$. 
The precise statements of these results are given in 
Theorem ~\ref{Theorem A}. In order to prove these results, 
we develop and combine the idea of 
usual iteration of non-recurrent critical point maps (\cite{ncp1}), 
conformal iterated function systems (\cite{mugdms}), 
and the dynamics of expanding rational semigroups (\cite{hiroki3}). 
However, as we mentioned before, since the generators may have critical points in the Julia set, 
we need some careful treatment on the critical points in the Julia set and some 
observation on the overlapping of the backward images of the Julia set under the elements of the 
semigroup.    

Our approach develops the methods from \cite{hiroki3}, \cite{ncp1}, and \cite{ncp2}.
In order to prove that a conformal measure exists, is atomless, and, ultimately,
geometric, we expand the concepts of estimability of measures, which originally
appeared in \cite{ncp1}, we introduce a partial order in the set of critical 
points, and a stratification of invariant subsets of the Julia set. As an entirely
new tool to all \cite{hiroki3}, \cite{ncp1}, and \cite{ncp2}, we introduce the concept
of essential families of inverse branches. This concept, supported by the notion of
nice open set, is extremely useful in the realm of semi-hyperbolic rational semigroups,
at it would also (without nice open set) substantially simplified considerations in the 
expanding case. 

In the second part of the paper, devoted to proving the existence and uniqueness of an 
invariant (with respect to the canonical skew-product) probability measure equivalent
with the $h$-conformal measure, the most challenging task is to prove the uniqueness
of the latter. We do it by bringing up and elaborating the tool of Vitali relations due
to Federer (see \cite{federer}), the tool which has not come up in \cite{ncp1}, \cite{ncp2}
nor \cite{hiroki3}. We rely here heavily on deep results from \cite{federer}. The second tool,
already employed in \cite{ncp2} and subsequent papers of the second author, is the
Marco Martens method of producing $\sg$-finite invariant measures absolutely
continuous with respect to a given quasi invariant measure. We apply and develop this
method, proving in particular its validity for abstract measure spaces and not only for
$\sg$-compact measure spaces. This is possible because of our use of Banach limits 
rather than weak convergence of measures.

\sp We remark that as illustrated in \cite{sumikokyuroku, hirokidc, hiroki7}, 
 estimating the Hausdorff dimension of the Julia sets of 
 rational semigroups plays an important role when we 
 investigate random complex dynamics and its associated 
 Markov process on $\oc .$ For example, 
 when we consider the random dynamics of a compact 
 family $\Gamma $ of polynomials 
 of degree greater than or equal to two,  
 then the function $T_{\infty }:\oc \rightarrow 
 [0,1]$ of probability of tending to $\infty \in \oc $ 
 varies only on the Julia set of rational semigroup  
 generated by $\Gamma $, and under some condition, 
 this $T_{\infty }:\oc \rightarrow [0,1]$ is continuous on $\oc $ and varies precisely on $J(G).$ 
If the Hausdorff dimension of the Julia set is strictly less than two, 
then it means that $T_{\infty }:\oc \rightarrow [0,1]$ is a 
complex version of devil's staircase (Cantor function) 
(\cite{hirokidc, sumikokyuroku, hiroki7, hiroki2010prep}). 

In order to present the precise statements of the main result, 
we give some basic notations.  
For each meromorphic function $\varphi $, 
we denote by $|\varphi '(z)|_{s}$ the norm of the derivative 
with respect to the spherical metric.
Moreover, we denote by 
$CV(\varphi )$ the set of critical values of $\varphi .$ 

Given a set
$A\sbt {\mathbb C}$ and $r>0$, the symbol
$B(A,r)\index{B(A,r)@$B(A,r)$}$ denotes the Euclidean open
$r$-neighborhood of the set $A$ and  
$\diam (A)$ denotes the diameter of $A$ with respect to the Euclidean distance.  
Furthermore, given a subset $A$ of $\oc$, $B_{s}(A,r)$ denotes the 
spherical open $r$-neighborhood of the set $B$, and  
finally $\diam _{s}(A)$ denotes the diameter of $A$ with respect to the 
spherical distance. 

Let $u\in \N .$ In this paper, an element of $(\mbox{Rat})^{u}$ is called a multi-map. 

Let $f=(f_{1},\ldots ,f_{u})\in (\mbox{Rat})^{u}$ be a multi-map and let 
$G=\langle f_{1},\ldots\! ,f_{u}\rangle $ be the  
rational semigroup generated by $\{ f_{1},\ldots ,f_{u}\}  .$   
Then,\ we use the following notation. 
Let $\Sigma _{u}:=\{ 1,\ldots\! ,u\} ^{\Bbb{N}}$ be the 
space of one-sided sequences of $u$-symbols endowed with the 
product topology. This is a compact metric space. 
Let 
$\tf:\Sg_{u}\times \oc \rightarrow \Sg_{u}\times \oc $ 
be the skew product map associated with $f=(f_{1},\ldots\! ,f_{u}) $
given by the formula
$$
\tf(\om,z)=(\sg (\om ),\ f_{\om_{1}}(z)),
$$
where $(\om,z)\in \Sg _{u}\times \oc,\ \om=(\om_{1},\om_{2},\ldots\! ),$ and 
$\sg :\Sigma _{u}\rightarrow \Sg _{u}$ denotes the shift map.
We denote by $p_{1}:\Sigma _{u}\times \oc \rightarrow \Sigma _{u}$ 
the projection onto $\Sigma _{u}$ and 
$p_{2}:\Sigma _{u}\times \oc \rightarrow \oc $ the 
projection onto $\oc $. That is, 
$$
p_{1}(\om ,z)=\om \ \text{ and } \ p_{2}(\om ,z)=z.
$$  
Under the canonical identification $p_{1}^{-1}\{ \om \} \cong \oc $, 
each fiber $p_{1}^{-1}\{ \om \} $ is a Riemann surface which 
is isomorphic to $\oc .$

Let $\Sg_u^*:= \bigcup _{n\in \N }\{ 1,\ldots ,u\} ^{n} $ be the
family of finite words over the alphabet $\{1,2,\ld,u\}$. 
For every $\tau \in \Sg _{u}^{*}$,  
we denote by $|\tau | $ the only integer $n\ge 0$ such that $\tau \in
\{ 1,\ldots ,u\} ^{n}.$  
For every $\tau \in \Sg _{u}$ we set $|\tau |=\infty .$ 
In addition, for every $\tau =(\tau _{1},\tau _{2},\ldots )\in \Sg
_{u}^{\ast }\cup \Sg _{u}$ and  
$n\in \N $ with $n\leq |\tau |$, 
we set 
$$
\tau |_{n}:= (\tau _{1},\tau _{2},\ldots ,\tau _{n}) \in \Sg_{u}^{\ast}.
$$ 
For every $\tau \in \Sg _{u}^{\ast }$, we denote
$$
\hat{\tau }=\tau |_{|\tau |-1}, \  \tau _{\ast }:= \tau _{|\tau |}
$$
and 
\beq\label{referee2}
[\tau ] := \{ \om \in \Sg _{u}\mid \om |_{|\tau |}=\tau \} 
\eeq
Furthermore, for every $\om \in \Sg _{u}^{\ast }\cup \Sg _{u}$ and
all $a,b\in \N $ with $a<b\leq |\om |$, we set 
$$
\om _{a}^{b}:=(\om _{a},\ldots ,\om _{b})\in \Sg _{u}^{\ast }.
$$  
For all $\omega , \tau \in \Sigma _{u}^{\ast }$,  
we say that $\omega $ and $\tau $ are comparable 
if either (1) $|\tau |\leq |\omega |$ and $\omega \in [\tau ]$, 
or (2) $|\omega |\leq |\tau | $ and $\tau \in [\omega ].$ 
We say that $\omega ,\tau $ are incomparable if they are not
comparable. By $\tau\om\in \Sg _{u}^{\ast }$ we denoted the concatenation of the words
$\tau$ and $\om$.
For each $\om =(\om _{1},\ldots ,\om _{n})\in \Sigma _{u}^{\ast }$, let 
$$
f_{\om }:= f_{\om _{n}}\circ \cdots \circ f_{\om _{1}}.
$$ 
Fix $\tau \in \Sg _{u}^{\ast }, x\in \oc , $ and $n\in \N $. 
Suppose that $z=f_{\tau }(x)$ is not a critical value of $f_{\tau }.$ 
Then we denote by $f_{\tau ,x}^{-1}$ the inverse branch of $f_{\tau }$ 
mapping $z$ to $x.$ Furthermore, we denote by $\tf _{\tau, x}^{-|\tau |}$ 
the inverse branch of $\tf ^{|\tau |}$ such that 
$\tf _{\tau, x}^{-|\tau |}(\om ,y)=(\tau \om, f_{\tau ,x}^{-1}(y)).$
Let 
$$
\Crit(\tf):=
\bigcup_{\om \in \Sigma _{u}}\{ v\in p _{1}^{-1}\{ \om \} \mid 
v \mbox{ is a critical point of }\tf
|_{p _{1}^{-1}\{ \om \} }\rightarrow p _{1}^{-1}\{ \sigma (\om )\} \} 
\ (\subset \Sigma _{u}\times \oc )  
$$ 
be the set of critical points of $\tf.$  
For each $n\in \N $ and $(\om ,z)\in 
\Sigma _{u}\times \oc $, we set 
$$
(\tf^{n})'(\om ,z):= (f_{\om_{n}}\circ\cdots \circ f_{\om _{1}})'(z).
$$  
For each $\om\in \Sg_u$ we define 
$$
J_{\om }:=\{ z\in \oc \mid 
 \{ f_{\om _{n}}\circ \cdots \circ f_{\om _{1}}\} _{n\in \N} \mbox{ is 
 not normal in any neighborhood of } z\} 
$$
 and we then set 
$$
J(\tf):= \overline{\cup _{w\in \Sigma _{u}}\{ \om \} \times J_{\om } },
$$
 where the closure is taken in the product space 
 $\Sigma _{u}\times \oc .$ By definition,\ 
 $J(\tf)$ is compact. Furthermore,\ by Proposition 3.2 in \cite{hiroki1},\ 
 $J(\tf)$ is completely invariant under $\tf$,\ 
 $\tf$ is an open map on $J(\tf)$,\ 
 $(\tf,J(\tf))$ is topologically exact under a mild condition,\ and
 $J(\tf)$ is equal to the closure of  
 the set of repelling periodic points of 
 $\tf$ provided that $\sharp J(G)\geq 3$,\ where we say that a
 periodic point $(\om ,z)$ of $\tf$ with  
 period $n$ is repelling if the modulus of the multiplier of 
 $f_{\om _{n}}\circ \cdots \circ f_{\om _{1}}$ at $z$ is strictly larger than $1$. 
Furthermore, 
$$
p_{2}(J(\tf))=J(G).
$$ 
%
%
%
\bdfn
Let $G$ be a rational semigroup and let $F$ be a subset of $\oc .$ 
We set $G(F)= \bigcup _{g\in G}g(F)$ and 
$G^{-1}(F)= \bigcup _{g\in G}g^{-1}(F).$ 
Moreover, we set $G^{\ast }=G\cup \{ Id\} $, 
where $Id$ denotes the identity map on $\oc .$ 
Furthermore, 
let $E(G):= \{ z\in \oc \mid \# \bigcup _{g\in G}g^{-1}(\{ z\} )<\infty \} .$ 
\edfn
\bprop[Proposition 3.2(f) in \cite{hiroki1}] \label{p1h10} (topological exactness) 
Let $G=\langle f_{1},\ldots ,f_{u}\rangle $ be a finitely generated rational semigroup. 
Suppose $\# J(G)\geq 3$ and $E(G)\subset F(G).$ Then, 
the action of the semigroup $G$ on the Julia set $J(G)$ is topologically exact, 
meaning that for every non-empty open set $U\sbt J(G)$ there exist $g_1,g_2,\ld,g_n
\in G$ such that 
$$
g_1(U)\cup g_2(U)\cup \ld g_n(U)\spt J(G).
$$
\eprop

\bdfn\label{d1h3intro}
A rational semigroup $G$ is called semi-hyperbolic if and only if 
there exists an $N\in \N $ and a $\delta >0$ such that 
for each $x\in J(G)$ and $g\in G$, 
$$\deg (g:V\rightarrow B_{s}(x,\delta ))\leq N$$
for each connected component $V$ of $g^{-1}(B_{s}(x,\delta )).$ 
\edfn
\bdfn\label{d1h2intro} 
Let $f=(f_{1},\ldots ,f_{u})\in (\mbox{{\em Rat}})^{u}$ be a multi-map and 
let $G=\langle f_{1},\ldots ,f_{u}\rangle .$ We say that 
$G$ (or $f$) satisfies the open set condition if there exists a non-empty 
open subset $U$ of $\oc$ with the following two properties:
\begin{itemize}
\item[(osc1)] $f_1^{-1}(U)\cup f_2^{-1}(U)\cup\ld f_u^{-1}(U)\sbt U$,
\item[(osc2)] $f_i^{-1}(U)\cap f_2^{-1}(U)=\es$ whenever $i\ne j$.
\end{itemize}
Moreover, we say that $G$ (or $f$) satisfies the nice open set condition if in
addition the following condition is satisfied. 
\begin{itemize}
\item[(osc3)] $\exists(\a\in(0,1)) \, \forall(0< r\le 1)\, \forall(x\in\ov U) \  \
l_2(U\cap B_{s}(x,r))\ge \a l_2(B_{s}(x,r))$, where $l_{2}$ denotes the $2$-dimensional 
Lebesgue measure on $\oc .$ 
\end{itemize}
\edfn
\begin{rem}
\label{r:osc3}
Condition {\em (osc3)} is not needed if our semigroup $G$ is expanding (see \cite{hiroki3}
or note that our proofs would use only {\em (osc1)} and {\em (osc2)} under this assumption). 
Condition {\em (osc3)} is 
satisfied in the theory of conformal infinite iterated function systems (see \cite{mulms},
comp. \cite{mugdms}), where it follows from the open set condition and 
the cone condition. Moreover, condition {\em (osc3)} holds for example if the boundary of $U$ is
smooth enough; piecewise smooth with no exterior cusps suffices. 
Furthermore, {\em (osc3)} holds if $U$ is a John domain (see \cite{CJY}). 
\end{rem}
\begin{dfn}[\cite{hiroki3}]
Let $G$ be a countable rational semigroup. 
For any $t\geq 0$ and $z\in \oc $, we 
set $S_{G}(z,t):=\sum _{g\in G}\sum _{g(y)=z}| g'(y)| _{s}^{-t}$, 
counting multiplicities.  
We also set 
$S_{G}(z):= \inf \{ t\geq 0: S_{G}(z,t)<\infty \} $ 
(if no $t$ exists with $S_{G}(z,t) <\infty $, then we set 
$S_{G}(z):=\infty $). Furthermore, 
we set $s_{0}(G):= \inf \{ S_{G}(z): z\in \oc \} .$  
This $s_{0}(G)$ is called the {\bf critical exponent of the 
Poincar\'{e} series} of $G.$ 
\end{dfn}
\begin{dfn}[\cite{hiroki3}]
Let $f=(f_{1},\ldots ,f_{u})\in (\mbox{{\em Rat}})^{u}$, $t\geq 0$, and $z\in \oc .$
 We put
$T_{f}(z,t):=\sum _{\om \in \Sigma _{u}^{\ast}}
\sum _{f_{\om }(y)=z}| f_{\om }'(y)| _{s}^{-t}$, 
counting multiplicities.   
Moreover, we set 
$T_{f}(z):=\inf \{ t\geq 0:T_{f}(z,t)<\infty \} $ 
(if no $t$ exists with $T_{f}(z,t)<\infty $, then we set 
$T_{f}(z)=\infty $). 
Furthermore, we set 
$t_{0}(f):= \inf \{ T_{f}(z): z\in \oc \} .$  
This $t_{0}(f)$ is called the {\bf critical exponent of 
the Poincar\'{e} series} of $f=(f_{1},\ldots ,f_{u})\in 
(\mbox{{\em Rat}})^{u}.$ 
\end{dfn}
\begin{rem}
\label{strem}
Let $f=(f_{1},\ldots ,f_{u})\in (\mbox{{\em Rat}})^{u}$, $t\geq 0$ , 
$z\in \oc $ and let 
$G=\langle f_{1},\ldots ,f_{u}\rangle .$ 
Then, 
$S_{G}(t,z)\leq T_{f}(t,z), S_{G}(z)\leq T_{f}(z),$ and 
$s_{0}(G)\leq t_{0}(f).$ Note that 
 for almost every $f\in (\mbox{{\em Rat}})^{u}$ with 
 respect to the Lebesgue measure, 
 $G=\langle f_{1},\ldots ,f_{u}\rangle $ is a free 
 semigroup and so we have 
 $S_{G}(t,z)=T_{f}(t,z), S_{G}(z)=T_{f}(z), $ and 
$s_{0}(G)=t_{0}(f).$  
\end{rem}
\bdfn
Let $\varphi :J(\tf )\rightarrow \R $ be a function. 
Let $\nu $ be a Borel probability measure on $J(\tf ).$ 
We say that $\nu $ is a $\varphi$-conformal measure 
for the map $\tf :J(\tf )\rightarrow J(\tf )$ if 
for each Borel subset $A$ of $J(\tf )$ such that 
$\tf |_{A}:A\rightarrow J(\tf )$ is injective, we have 
$$\nu (\tf (A))=\int _{A}\varphi \ d\nu .$$ 
A $|\tf'|_{s} ^{t}$-conformal measure $\nu $ is sometimes called 
a $t$-conformal measure.  
When $J(G)\subset \C $,  
a $|\tf'|^{t}$-conformal measure is also sometimes called a $t$-conformal measure.   
\edfn 
\bdfn
Let $G=\langle f_{1},\ldots ,f_{u}\rangle $ and let $t\geq 0.$  
For all $z\in \oc \setminus G^{\ast }(\bigcup _{j=1}^{u}\CV (f_{j}))$, 
we set $$P_{z}(t):= \limsup _{n\rightarrow \infty }\frac{1}{n}\log \sum _{|\om |=n}
\sum _{x\in f_{\om }^{-1}(z)}|f_{\om }'(x)|_{s}^{-t}.$$  
\edfn
The main result of this paper is the following. 
\bthm[see Lemma~\ref{l1h63}, Lemma~\ref{l1h33}, Theorem~\ref{t1h61}, 
Corollary~\ref{t1h61cor1} and Theorem~\ref{t4h65}]
\label{Theorem A}
Let $f=(f_{1},\ldots ,f_{u})\in (\mbox{{\em Rat}})^{u}$ be a multi-map. 
Let $G=\langle f_{1},\ldots ,f_{u}\rangle $.
Suppose that there exists an element $g$ of $G$ such that $\deg (g)\geq 2$, 
that each element of {\em Aut}$(\oc)\cap G$ (if this is not empty) is loxodromic, 
that $G$ is semi-hyperbolic, and that $G$ satisfies the nice open set condition. 
Then, we have the following. 
\begin{itemize}
\item[(a)] $J(G)\cap \ov{G^{\ast }(\bigcup _{j=1}^{u}\CV(f_{j}))} $ is nowhere dense in 
$J(G)$ and, for each $t\geq 0$, the function $z\mapsto P_{z}(t)$ is constant 
throughout a neighborhood of $J(G)\setminus \ov{G^{\ast }(\bigcup _{j=1}^{u}\CV(f_{j}))}$ in $\oc .$  
Denote this constant by $P(t)$. 
\item[(b)] The function $t\mapsto P(t)$ has a unique zero. This zero is denoted by $h=h(f)$. 
\item[(c)] There exists a unique $|\tf '|_{s}^{h}$-conformal measure $\mh $ 
for the map 
$\tf :J(\tf)\rightarrow J(\tf ).$ 
\item[(d)] Let $m_{h}:= \mh\circ p_{2}^{-1}.$ 
Then there exists a constant $C\geq	1$ such that 
$$C^{-1}\leq \frac{m_{h}(B_{s}(z,r))}{r^{h}}\leq C$$
for all $z\in J(G)$ and all $r\in (0,1].$
\item[(e)] 
$h(f)=\HD(J(G))=\PD(J(G))=\BD(J(G))$, where $\HD ,\PD, \BD $ denote the 
Hausdorff dimension, packing dimension, and box dimension, respectively, 
with respect to the spherical distance in $\oc .$  
Moreover, for each $z\in J(G)\setminus \ov{G^{\ast }(\bigcup _{j=1}^{u}\CV(f_{j}))}$, 
we have  $h(f)=T_{f}(z)=t_{0}(f)=S_{G}(z)=s_{0}(G).$ 
\item[(f)] 
Let $\H^{h}$ and $\P^{h}$ be the $h$-dimensional Hausdorff dimension and 
$h$-dimensional packing measure respectively. Then, 
all the measures $\H^{h}, \P^{h}$, and $m_{h}$ are mutually equivalent 
with Radon-Nikodym derivatives uniformly separated away from zero and infinity.
\item[(g)] $0<\H^{h}(J(G)),\P^{h}(J(G))<\infty .$ 
\item[(h)] There exists a unique Borel probability $\tf$-invariant measure 
$\tilde{\mu} _{h}$ on 
$J(\tf )$ which is absolutely continuous with respect to $\mh .$ The measure 
$\tilde{\mu} _{h}$ is metrically exact and equivalent with $\mh .$   
\end{itemize}
\ethm 
The proof of Theorem~\ref{Theorem A} will be given in the following 
Sections~\ref{s:Pre}--\ref{s:Inv}. In Section~\ref{s:Ex}, we give some examples of 
semi-hyperbolic rational semigroups satisfying the nice open set
condition.  

\

\fr ACKNOWLEDGMENT: We wish to thank the anonymous referee  for
his/her valuable comments and suggestions which improved the final
exposition of our paper.

\section{Preliminaries}
\label{s:Pre}
\subsection{Distortion and Measures}

\fr All the points (numbers) appearing in this paper are complex
unless it is clear from the context that they are real. In
particular $x$ and $y$ are always assumed to be complex numbers and
not the real and imaginary parts of a complex number. 

\bthm\label{kdt1/4}
(Koebe's ${1\over 4}$-Theorem) 
If $z\in {\mathbb C}$, $r>0$ and $H:B(z,r)\to
{\mathbb C}$  is an arbitrary univalent analytic function, then
$H(B(z,r))\spt B(H(z),4^{-1}|H'(z)|r)$.
\ethm

\bthm\label{kdt1E} (Koebe's Distortion Theorem, I) There exists a function  $k:[0,1)\to [1,\infty)$ such that for
any $z\in {\mathbb C}$, $ r >0$, $ t\in [0,1)$ and any univalent
analytic function $H:B(z,r)\to {\mathbb C}$ we have that
$$
\sup\{|H'(w)|:w\in B(z,tr)\} \le k(t) \inf\{|H'(w)|:w\in
B(z,tr)\}.
$$
We put $K=k(1/2)$.
\ethm


 The following is a straightforward consequence of these two
distortion theorems.

\blem\lab{lncp12.9.} Suppose that $D\sbt {\mathbb C}$ is an open
set, $z\in D$ and $H:D\to {\mathbb C}$ is an analytic map which has
an analytic inverse $H_z^{-1}$ defined on $B(H(z),2R)$ for some
$R>0$. Then for every $0\le r\le R$
$$
B(z,K^{-1}r|H'(z)|^{-1})\sbt H_z^{-1}(B(H(z),r))\sbt
B(z,Kr|H'(z)|^{-1}).
$$
\elem


  We also use the following more geometric versions of Koebe's
Distortion Theorems involving moduli of annuli.

\bthm\label{kdt2E}(Koebe's Distortion Theorem, II) There exists a function $w:(0,+\infty)\to [1,\infty)$ such that
for any two open topological disks $Q_1\sbt Q_2\subset \C $ with
$\text{{\em Mod}}(Q_2\sms Q_1)\ge t$ and any univalent analytic function
$H:Q_2\to {\mathbb C}$ we have 
$$
\sup\{|H'(\xi)|:\xi\in Q_1\} \le w(t) \inf\{|H'(\xi)|:\xi\in Q_1\}.
$$
\ethm


\bdfn\lab{dcomp} If $H:D\to {\mathbb C}$ is an analytic map, $z\in
{\mathbb C}$, and $r>0$, then by
$$
\Comp(z,H,r)
$$
we denote the connected component of $H^{-1}(B(H(z),r))$ that
contains $z$. 
\edfn

\
\ni Given an  analytic function $H$ defined  throughout a region
$D\sbt {\mathbb C}$, we  put
$$\index{critical points@$\Crit(H)$}
\Crit(H)=\{z\in D:H'(z)=0\}.
$$

\ni Suppose now that $c$ is a critical point of an analytic map
$H:D\to {\mathbb C}$. Then there exists $R=R(H,c)>0$ and
$A=A(H,c)\ge 1$ such that
$$
A^{-1}|z-c|^{q} \le |H(z)-H(c)| \le A|z-c|^{q}
$$
and
$$
A^{-1}|z-c|^{q-1} \le |H'(z)| \le A|z-c|^{q-1}
$$
for every $z\in \Comp(c,H,R)$, and that
$$
H(\Comp(c,H,R))=B(H(c),R), 
$$ 
where $q=q(H,c)$ is the order of
$H$ at the critical point $c$. In particular
$$
\Comp(c,H,R)\sbt B(c,(AR)^{1/q}).
$$  Moreover, by taking
$R>0$ sufficiently small, we can ensure that the above two 
inequalities hold for every $z\in B(c,(AR)^{1/q})$ and the ball
$B(c,(AR)^{1/q})$ can be expressed as a union of $q$ closed
topological disks with piecewise smooth boundaries and mutually disjoint
interiors such that the map $H$ restricted to each of these
interiors, is injective.

In the sequel we require  the following technical lemma proven in
\cite{ncp1} as Lemma~2.11.

\blem\lab{lncp12.11} 
Let $H: D\rightarrow \C $ be an analytic function.  
Suppose that an analytic map $Q\circ H:D\to
\mathbb C$, a radius $R>0$ and a point $z\in D$ are such that
$$
\Comp(H(z),Q,2R)\cap \Crit(Q)=\es \  \text{{\rm and }} \
\Comp(z,Q\circ H,R)\cap \Crit(H)\ne\es.\ \ \mbox{{\em (a)}}
$$
If $c$ belongs to the last intersection, 
$A=A(H,c),$ and $q$ is the order of $H$ at $c$, 
 and
$$
\diam\(\Comp(z,Q\circ H,R)\)\le (AR(H,c))^{1/q}, \ \ \ \ \ \ \ \ \ 
\ \ \ \ \ \ \ \ \ \ \ \ \ \ \ \ \ \ \ \ \ \ \ \ \ \ \ \ \ \ \ \mbox{{\em (b)}}
$$
then
$$
|z-c|\le KA^2|(Q\circ H)'(z)|^{-1}R.
$$
\elem
\fr{\sl Proof.} In view of Lemma~\ref{lncp12.9.}
$$
\Comp(H(z),Q,R)\sbt B(H(z),KR|Q'(H(z))|^{-1}).
$$
So, since $H(c)\in \Comp(H(z),Q,R)$, we get $$H(c)\in
B(H(z),KR|Q'(H(z))|^{-1}).$$ Thus, using this  and  (b) we obtain
$$\begin{aligned}
A^{-1}|z-c|^{q} &\le |H(z)-H(c)| \\
              &\le KR|Q'(H(z))|^{-1}\\
              & =KR|(Q\circ H)'(z)|^{- 1}|H'(z)| \\
&\le KR |(Q\circ H)'(z)|^{-1} A|z-c|^{q-1}. \end{aligned}
$$
So, $|z-c|\le KA^2|(Q\circ H)'(z)|^{-1}R$. \endpf

\

\fr Developing the appropriate concepts from \cite{ncp1} we now shall define the 
notions of estimabilities (upper, lower and strongly lower) of measures, and 
we shall prove some of its properties and consequences.

\bdfn\label{dncp12.3.} 
Suppose $m$ is a finite Borel measure on Borel set $X\sbt\R^n$.
\begin{itemize}
\item[(1)] (Upper Estimability)
The measure $m$ is said to be upper $t$-estimable at a point $x\in X$
if there exist $L>0$ and $R>0$ such that
$$
m(B(x,r))\le Lr^t
$$
for all $0\le r\le R$. The number $L$ is referred to as the upper estimability
constant of the measure $m$ at $x$ and the number $R$ is referred to as the upper 
estimability radius of the measure $m$ at $x$. If there exists an $L>0$ and an $R>0$ such
that the measure $m$ is upper $t$-estimable at each point of $X$ with the upper 
estimability constant $L$ and the upper estimability radius $R$, the measure
$m$ is said to be uniformly upper $t$-estimable.
\item[(2)] (Lower Estimability)
The measure $m$ is said to be lower $t$-estimable at a point $x\in X$
if there exists an $L>0$ and an $R>0$ such that
$$
m(B(x,r))\ge Lr^t
$$
for all $0\le r\le R$. The number $L$ is referred to as the lower estimability
constant of the measure $m$ at $x$ and the number $R$ is referred to as the lower 
estimability radius of the measure $m$ at $x$. If there exists an $L>0$ and an $R>0$ such
that the measure $m$ is lower $t$-estimable at each point of $X$ with the lower  
estimability constant $L$ and the lower estimability radius $R$, then the measure
$m$ is said to be uniformly lower $t$-estimable.
\item[(3)] (Strongly Lower Estimability)
The measure $m$ is said to be strongly lower $t$-estimable at a point $x\in X$
if there exists an $L>0$, a $\lambda \in (0,\infty )$, and an $R>0$ such that
$$
m(B(y,\lambda  r))\ge Lr^t  
$$
for every $y\in B(x,R)$ for all $0\le r\le R$. The number $L$ is referred to as 
the lower estimability constant of the measure $m$ at $x$, the number $R$ is referred 
to as the lower estimability radius of the measure $m$ at $x$, and $\lambda $ is referred 
to as the lower estimability size of the measure $m$ at $x$. If there exists an $L>0$, 
a $\lambda $, and an $R>0$ such
that the measure $m$ is strongly lower $t$-estimable at each point of $X$ with the lower 
estimability constant $L$, the lower estimability radius $R$, and the lower estimability 
size $\lambda $, then the measure $m$ is said to be uniformly strongly
lower $t$-estimable.
\end{itemize}
\edfn

\ 

\ni Suppose $U$ and $V$ are open subsets of $\C $, $z$ is a point of
$U$,  and $H:U\to V$ is an analytic map. 
Fix $t\ge 0$. A pair $(m_1,m_2)$ of finite Borel measures respectively on $U$ and 
$V$ is called upper $t$-conformal for $H$ if and only if
$$
m_2(H(A))\ge \int_A|H'|^tdm_1
$$
for all Borel sets $A\sbt U$ such that the restriction $H|_A$ is injective. The pair
$(m_1,m_2)$ is called $t$-conformal if the above inequality sign can be replaced by 
equality. We will need the following lemmas.

\

\blem\label{lku4.8} 
Suppose $U$ and $V$ are open subsets of $\C $ and $H:U\to V$ is an analytic map
which has an analytic inverse $H_z^{-1}$ defined on $B(H(z),2R)$ for some
$R>0$. Suppose $(m_1,m_2)$ 
is a $t$-conformal pair of measures for $H$. Suppose $m_2$ is strongly lower $t$-estimable 
at $H(z)$ with estimability constant $L$, estimability radius $0<r_{0}\le R/2$, and
the lower estimability size $\lambda \le 1$. Then the measure $m_1$ is strongly lower 
$t$-estimable at $z$ with lower estimability constant $L$, lower estimability radius
$K^{-1}|H'(z)|^{-1}r_{0}$, and lower estimability size $K^2\lambda $.
\elem

{\sl Proof.} 
Let $0\leq r\leq r_{0}.$ 
Consider $x\in B(z,K^{-1}r|H'(z)|^{-1})$. Then 
by Lemma~\ref{lncp12.9.}
$H(x)\in B(H(z),r)$ and
therefore $m_2(B(H(x),\lambda  r))\ge Lr^t$. Since
$$
B(H(x),\lambda  r)\sbt B(H(z),2r)\sbt B(H(z),R)
$$ 
we have
$$
H_z^{-1}\(B(H(x),\lambda  r)\)\sbt B(x,K\lambda 
r|H'(z)|^{-1})=B(x,K^2\lambda ( K^{- 1}|H'(z)|^{-1}r)).
$$
Thus 
$$
m_1\(B(x,K^2\lambda (K^{-1}|H'(z)|^{-1}r))
\ge K^{-t}|H'(z)|^{-t}Lr^t= L(K^{- 1}|H'(z)|^{-1}r)^t.
$$ 
The proof is finished. \endpf

\

\blem\label{lku4.9} 
Suppose $U$ and $V$ are open subsets of $\C $ and $H:U\to V$ is an analytic map.
Let $c\in U$ be a critical point of $H$ of order $q$. Suppose $(m_1,m_2)$ 
is a $t$-conformal pair of measures for $H$. If $m_2$ is lower 
$t$-estimable at $H(c)$ with estimability constant $L$ and estimability radius 
$0<T\le R(H,c)$, then the measure $m_1$ is lower $t$-estimable at $c$ with 
estimability constant  $A(H,c)^{-2t}L$ and estimability radius $(A(H,c)T)^{1/q}$.
\elem

{\sl Proof.} Put $A=A(H,c)$. Let $0<r\leq T.$ Notice that $B(H(c),r)=H(\Comp(c,H,r))$. If $x\in
\Comp(c,H,r)$, then  $ A^{-1}|x-c|^{q} \leq  |H(x)-H(c)| <r$
which implies that $x\in B(c,(Ar)^{1/q})$. Thus $B(H(c),r)\sbt
H(B(c,(Ar)^{1/q})$ and therefore
$$
\begin{aligned}
Lr^t 
&\le m_2(B(H(c),r))\\
&\le m_2\(H(B(c,(Ar)^{1/q}))\)\\
&\le \int_{B(c,(Ar)^{1/q})}|H'(z)|^t\,dm_1(z) \\
&\le \int_{B(c,(Ar)^{1/q})}A^t(|z-c|^{q-1})^t\,dm_1(z) \\
&\le A^t(Ar)^{{q-1\over q}t}m_1(B(c,(Ar)^{1/q})).
\end{aligned}
$$
So, $m_1(B(c,(Ar)^{1/q})) \ge A^{-2t}L((Ar)^{1/q})^t$. \endpf

\blem\lab{lku4.10} 
Suppose $U$ and $V$ are open subsets of $\C $ and $H:U\to V$ is an analytic map.
Let $c\in U$ be a critical point of $H$ of order $q$. Suppose $(m_1,m_2)$ 
is an upper $t$-conformal pair of measures for $H$ such that $m_1(\{ c\} )=0$. If 
$m_2$ is upper $t$-estimable at $H(c)$ with 
estimability constant $L$ and estimability radius $0<T<R(H,c)$, then the measure
$m_1$ is upper $t$-estimable at $c$ with estimability constant 
$q(2A(H,c)^2)^t(2^{t/q}-1)^{-1}L$ and estimability radius $(A(H,c)^{-1}T)^{1/q}$.
\elem
{\sl Proof.} Put $A=A(H,c)$. Take any $0<s\le T$. then $H(B(c,(A^{-1}s)^{1/q}))\sbt 
B(H(c),s)$. Set $R(c,a,b)=\{z:a\le |z-c|<b\}$ and abbreviate
$
R(c,2^{-1/q}(A^{-1}s)^{1/q},(A^{-1}s)^{1/q}) 
$ to $R(c).$  
Using our assumptions and the fact that the map $H$ is $q$-to-$1$ on 
$B(c,(A^{-1}s)^{1/q})$, we obtain
$$
\begin{aligned}
Ls^t &\ge m_2(B(H(c),s))\\
     &\ge m_2\(H(B(c,(A^{-1}s)^{1/q}))\)\\
     &\ge q^{-1}\int_{B(c,(A^{-1}s)^{1/q})}|H'(z)|^t\,dm_1(z) \\
&\ge q^{-1}\int_{R(c)}|H'(z)|^t\,dm_1(z) \\
&\ge q^{-1}A^{-t}(2^{-1}A^{-1}s)^{{q-1\over q}t}m_1(R(c)).
\end{aligned}
$$
So, $m_1\(R(c,2^{-1/q}(A^{-1}s)^{1/q},(A^{-1}s)^{1/q})\)
\le q2^{t(1-{1\over q})}A^{2t}L((A^{-1}s)^{1/q})^t$ and
therefore for any $0<r\leq T$, 
$$
\begin{aligned}
m_1\(B(c, (A^{-1}r)^{1/q})\)
& =  m_1\(\bu_{n=0}^\infty R\(c,2^{-{n+1\over q}}(A^{-1}r)^{1/q},
         2^{-{n\over q}}(A^{-1}r)^{1/q})\) \\
&=  \sum_{n=0}^\infty m_{1} \(R(c,2^{-{1\over q}}(A^{-1}2^{-n}r)^{1/q},(A^{-1}2^{-
         n}r)^{1/q})\)\\
&\le q(2^{1-{1\over q}}A^2)^tL\sum_{n=0}^\infty (A^{-1}2^{-n}r)^{t/q} \\ 
&=q(2^{1-{1\over q}}A^2)^t{L\over 1-2^{-{t\over q}}}((A^{-1}r)^{1/q})^t \\
&=q(2A^2)^t(2^{t/q}-1)^{-1}L((A^{-1}r)^{1/q})^t.
\end{aligned}
$$
The proof is finished. \endpf

\

\blem\lab{lku4.11} 
Suppose $U$ and $V$ are open subsets of $\C $ and $H:U\to V$ is an analytic map.
Let $c\in U$ be a critical point of $H$ of order $q$. Suppose $(m_1,m_2)$ 
is a $t$-conformal pair of measures for $H$. If $m_2$ is strongly lower $t$-estimable 
at $H(c)$ with estimability constant $L$, estimability radius $0<T<R(H,c)/3$, and
lower estimability size $\lambda $. Then the measure $m_1$ is strongly lower 
$t$-estimable at $c$ with lower estimability constant 
$\^L=L\min\{K^{-t},(A(H,c)^2\lambda )^{{1-q\over q}t}\}$, lower estimability radius
$(A^{-1}T)^{1/q}$, and lower estimability size $\^\lambda =(2^{q+1}KA^2\lambda )^{1/q}$.
\elem

{\sl Proof.} As in the proof of the previous lemma put $A=A(H,c)$. 
Let $0<r\leq T$ and 
let 
$x\in B(c,(A^{-1}r)^{1/q})$. If $\^\lambda (A^{-1}r)^{1/q}\ge 2|x-c|$, then
$$\begin{aligned}
B(x,\^\lambda (A^{-1}r)^{1/q})&\spt B(c,\^\lambda (A^{-1}r)^{1/q}/2)\\
& =B(c,(2K)^{1/q}(A\lambda  r)^{1/q}) \\
   & \spt B(c,(A\lambda  r)^{1/q}).
\end{aligned}$$
It follows from the assumptions that $m_{2}$ is lower $t$-estimable at $H(c)$ with
lower estimability constant $\lambda ^{-t}L$ and lower estimability radius
$\lambda T$. Therefore, in view of Lemma~\ref{lku4.9} the critical 
point $c$ is lower $t$-estimable with lower estimability constant $A^{-2t}\lambda ^{-t}L$  
and lower estimability radius $(A\lambda  T)^{1/q}$. Thus
\begin{equation}\lab{1042606}
\begin{aligned}
m_1\(B(x,\^\lambda (A^{-1}r)^{1/q})\)&\ge A^{-2t}\lambda ^{-t}L(A\lambda 
r)^{t/q}\\
 & =(A^2\lambda )^{{1- q\over q}t}L((A^{-1}r)^{1/q})^t.
\end{aligned}
\end{equation}
So, suppose that
\begin{equation}\lab{2042606}
\^\lambda (A^{-1}r)^{1/q}< 2|x-c|.
\end{equation}
Since $c$ is a critical point we have $$|H'(x)|\ge
A^{-1}|x-c|^{q-1}\ge A^{-1}\^\lambda ^{q- 1}(A^{-1}r)^{q-1\over
q}2^{1-q},$$ which means that
\begin{equation}\lab{3042606}
\begin{aligned}
\^\lambda (A^{-1}r)^{1/q} & \ge
A^{-1}\^\lambda ^qA^{-1}r2^{1-q}|H'(x)|^{-1}\\
 & = 4K\lambda  r|H'(x)|^{-1}\ge
K\lambda  r|H'(x)|^{-1}.
\end{aligned}
\end{equation}
In view of (\ref{2042606})
$$
|H(x)-H(c)|\ge A^{-1}|x-c|^{q}\ge
A^{-1}2^{-q}\^\lambda ^{q}A^{-1}r=2K\lambda  r\ge 2\lambda  r
$$
which implies that
\begin{equation}\lab{4042606}
H(c)\notin B(H(x),2\lambda  r).
\end{equation}
Since $|H(x)-H(c)|\le A|x-c|^{q}\le R(H,c)/3$, we have $B(H(x),2\lambda 
r)\sbt B(H(c),R(H,c))$. So, (\ref{4042606}) implies the existence of a
holomorphic inverse branch $H_x^{-1}:B(H(x),2\lambda  r)\to \C $ of $H$ which sends $H(x)$ to $x$. Since, by the assumptions, the measure
$m_2$ is lower $t$-estimable at $H(x)$ with lower estimability constant
$\lambda ^{-t}L$ and lower estimability radius $\lambda  r$, it follows from 
the proof of Lemma~\ref{lku4.8} that the measure $m_1$ is lower $t$-estimable at $x$ 
with lower estimability constant $K^{-2t}\lambda ^{-t}L$ and lower estimability 
radius $K\lambda  r|H'(x)|^{-1}$. Thus, using (\ref{3042606}), we get
$$
\begin{aligned}
m_{1} \(B(x,\^\lambda (A^{-1}r)^{1/q})\) &\ge m_{1} \(B(x,K^{-1}\lambda  r|H'(x)|^{-1}))\\
& \ge K^{-2t}\lambda ^{-t}L(K\lambda  r|H'(x)|^{-1})^t \\
&\ge K^{-t}Lr^tA^{-t}|x-c|^{(1-q)t}\\
& \ge K^{-t}L(A^{-1}r)^t(A^{-1}r)^{{1-q\over q}t} \\
&=K^{-t}L((A^{-1}r)^{1/q})^t.
\end{aligned}
$$
In view of this and (\ref{1042606}) the proof is completed. \endpf

  By writing  $A \lek B$\index{\lek@$\lek$}  we mean that there exists a positive
  constant $C$  such that $  A \leq C B$ for all $A$ and $B$
  under consideration. Then $ A \gek B$\index{\gek@$\gek$} means that $B \lek A$, and
  $A  \comp B$\index{\comp@$\comp$}  says that $A\lek B$  and $B \lek A$.

\subsection{Open Set Condition and Essential Families}

\fr In this section, starting with
the open set condition,
we develop the machinery of essential families of inverse branches. We first prove 
the following two lemmas.

\

\blem
\label{l:osclower}
Let $G=\langle f_{1},\ldots ,f_{u}\rangle $ be a 
rational semigroup satisfying the nice open set condition 
with $U.$ Let $j\in \{ 1,\ldots ,u\} $ and 
let $c\in f_{j}^{-1}(\overline{U})$ be a critical point 
of $f_{j}.$ 
Then there exist constants $\zeta _{j,c}>0, \xi _{j,c}>0, $ and 
$T_{j,c}>0$ such that 
for each $x\in B_{s}(c,\zeta _{j,c})\cap f_{j}^{-1}(\overline{U})$ and 
for each $0<r<T_{j,c}$, 
$l_{2}(f_{j}^{-1}(U)\cap B_{s}(x,r))\geq \xi _{j,c}r^{2}.$ 
\elem
\begin{proof}
By conjugating $G$ by an element of Aut$(\oc)$, we may assume that 
$\infty \not\in  f_{j}^{-1}(\{ f_{j}(c)\} ).$ Let 
$W$ be an open neighborhood of $f_{j}(c)$ in $\C $ such that 
$f_{j}^{-1}(W)\subset \C .$   
Let $m_{1}:=l_{2,e}|_{f_{j}^{-1}(U\cap W)} $ 
and $m_{2}:= l_{2,e}|_{U\cap W}$, 
where $l_{2,e}$ denotes the Euclidian measure on $\C .$ 
Then $(m_{1},m_{2})$ is a $2$-conformal pair for 
$f_{j}.$ 
By the nice open set condition, 
there exist constants $C>0$ and $0<R<\infty $ such that 
for each $y\in \overline{U}\cap W$ and for each $0<r<R$, 
$m_{2}(B(y,r))\geq Cr^{2}.$ 
By using the method of the proof of Lemma~\ref{lku4.11}, 
it is easy to see that 
there exist constants $\zeta _{j,c}'>0$, $\xi _{j,c}>0$ and $T_{j,c}'>0$ such that 
for each $x\in B(c,\zeta _{j,c}')\cap f_{j}^{-1}(\overline{U})$ and for each 
$0<r<T_{j,c}'$, 
$m_{2}(B(x,r))\geq \xi _{j,c}'r^{2}.$ 
Thus, the statement of our lemma holds. We are done.     
\end{proof}
Combining Lemma~\ref{l:osclower} and Koebe's Distortion Theorem, 
we immediately obtain the following lemma.
\blem
\label{l:nosclc}
Let $G=\langle f_{1},\ldots ,f_{u}\rangle $ be a 
rational semigroup satisfying the nice open set condition with $U.$ 
Then, there exist constants $\xi >0$ and $T>0$ such that 
for each $j=1,\ldots ,u$ and for each $x\in f_{j}^{-1}(\overline{U})$, 
$l_{2}(f_{j}^{-1}(U)\cap B_{s}(x,r))\geq \xi r^{2}.$ 
\elem 

\

For every family $\Fa\sbt\Sg_u^*$ let
$$
\hat\Fa=\{{\hat\tau}:\tau\in\F\} \  \text{ and } \Fa_*=\{\tau_*:\tau\in\F\}.
$$

\bdfn\label{d1h2a.1} 
Let $G=\langle f_{1},\ldots, f_{u}\rangle $ be a rational semigroup 
satisfying the nice open set condition. Suppose that $J(G)\subset \C. $ 
Fix a number $M>0$, a number $a>0$, and $V$, an open subset of $\Sg_u$. Suppose $x\in J(G)$ and $r\in (0,1]$. 
A family $\Fa\sbt\Sg_u^*$ is called $(M,a,V)$-essential for the
pair $(x,r)$ provided that the following conditions are satisfied.
\begin{itemize}
\item[(ess0)] For every $\tau \in \Fa$, $f_{\tau }(x)\in J(G).$  
\item[(ess1)] For every $\tau\in \Fa$ there exists a number $R_\tau$ with $0<R_{\tau }<a$ and an $f_{\hat{\tau},x}^{-1}:
B(f_{\hat{\tau}}(x),2R_\tau)\to\C$, an analytic inverse branch of $f_{\hat{\tau}}^{-1}$ sending $f_{\hat{\tau}}(x)$
to $x$, such that
$$
M^{-1}R_\tau\le |f_{\hat{\tau}}'(x)|r\le {1\over 4}R_\tau.
$$
\item[(ess2)] The family $\Fa$ consists of mutually incomparable words.
\item[(ess3)] $\bu_{\tau\in\Fa}[\tau]=V$.
\end{itemize}
If $V=\Sg_u$, the family $\Fa$ is simply called $(M,a)$-essential for the pair $(x,r)$.
\edfn

\

%
%

\fr We shall prove the following.

\

\bprop\label{p1h2a.1}
Let $G=\langle f_{1},\ldots ,f_{u}\rangle $ be a rational semigroup satisfying the nice 
open set condition with $U.$ Suppose that $J(G)\subset \C .$ 
Then, for every number $M>0$ and for every $a>0$ there exists an integer $\#_{(M,a)}\ge 1$ with the following
properties. If $V$ is an open subset of $\Sg_u$, $x\in J(G)$, $r\in (0,1]$, and $\Fa\sbt\Sg_u^*$ 
is an $(M,a,V)$-essential family for $(x,r)$, then we have the following.
\begin{itemize}
\item[(a)] 
$$
B(x,r)
\sbt f_{{\hat\tau},x}^{-1}\(B(f_{\hat\tau}(x),R_\tau)\)
\sbt\bu_{\g\in\Fa}f_{\hat\g,x}^{-1}\(B(f_{\hat\g}(x),R_\g)\)
\sbt B(x,KMr)
$$
for all $\tau\in\Fa$.
\item[(b)] 
$$
J(\tf)\cap(V\times B(x,r))
\sbt \bu_{\tau\in \Fa}\^f_{{\hat\tau},x}^{-|{\hat\tau}|}\(p_2^{-1}(B(f_{\hat\tau}(x),R_\tau))\)
=    \bu_{\tau\in \Fa}[\tau]\times f_{{\hat\tau},x}^{-1}\(B(f_{\hat\tau}(x),R_\tau)\). 
$$
\item[(c)] $\#\Fa\le\#_{(M,a)}$.
\end{itemize}
\eprop
{\sl Proof.} Item (a) follows immediately from Theorem~\ref{kdt1/4} (${1\over 4}$-Koebe's 
Distortion Theorem), and Theorem~\ref{kdt1E}. The equality part in item (b) is obvious. In order to prove the
inclusion take $(\om,z)\in J(\tf)\cap (V\times B(x,r))$. By item (ess3) of 
Definition~\ref{d1h2a.1} there exists $\tau\in\Fa$ such that $\om\in[\tau]$.
But then, by the first in item (a), $(\om,z)\in 
[\tau]\times f_{{\hat\tau},x}^{-1}\(B(f_{\hat\tau}(x),R_\tau)\)$ and item (b) is entirely 
proved. Let us deal with item (c). 
By item (osc2) of Definition~\ref{d1h2intro}, 
$$ \{ f_{\hat{\tau},x}^{-1}((f_{\tau _{\ast }}|_{B(f_{\hat{\tau } }(x),R_{\tau })})^{-1}(U))\} _{\tau \in \mathcal{F}}$$ 
is a family of mutually disjoint sets.
Hence, using also (a), we get
\beq\label{1h2c.1}
\Sg _{\tau \in \mathcal{F}}l_{2}
(f_{\hat{\tau},x}^{-1}((f_{\tau _{\ast }}|_{B(f_{\hat{\tau }}(x),R_{\tau })})^{-1}(U)))
\le l_2(B(x,KMr))
=C\pi(KM)^2r^2, 
\eeq
where $C>0$ is a constant independent of $r,M$, and $a.$ 
Let $L_{a}:= \xi \min \{ (T/a)^{2},1\} $, 
where $\xi $ and $T$ come from Lemma~\ref{l:nosclc}. 
By Lemma~\ref{l:nosclc}, we obtain that 
for each $j=1,\ldots ,u$ , for each $y\in \overline{f_{j}^{-1}(U)}$, and 
for each $0<b\leq a$,  
\beq
\label{eq:l2byb}
l_{2}(B(y,b)\cap f_{j}^{-1}(U))\geq L_{a}b^{2}. 
\eeq
It follows from Theorem~\ref{kdt1E}, (\ref{eq:l2byb}), and (ess1) that 
for all $\tau \in \mathcal{F}$, 
we have
$$
\aligned
l_{2}\left(f_{\hat{\tau},x}^{-1}((f_{\tau _{\ast }}|_{B(f_{\hat{\tau }}(x),R_{\tau })})^{-1}
(U))\right) 
& \geq K^{-2}|f_{\hat{\tau }}'(x)|^{-2}
l_{2}((f_{\tau _{\ast }}|_{B(f_{\hat{\tau }}(x),R_{\tau })})^{-1}(U)) \\ 
& \geq K^{-2}|f_{\tau }'(x)|^{-2}l_{2}(B(f_{\hat{\tau }}(x),R_{\tau })\cap f_{\tau _{\ast }}^{-1}(U))\\ 
& \geq K^{-2}|f_{\hat{\tau }}(x)|^{-2}L_{a}R_{\tau }^{2}\\ 
& \geq K^{-2}16 L_{a}r^{2}.
\endaligned 
$$
Combining this with (\ref{1h2c.1}) we get that $\#\Fa\le (16L_{a})^{-1}CK^{4}\pi M^2$. 
We are done. 
\endpf

\section{Basic Properties of semi-hyperbolic Rational Semigroups}

\fr In this section we define semi-hyperbolic rational semigroups and 
collect their dynamical properties, with proofs, which will be needed in the sequel.
\bdfn\label{d1h3}
A rational semigroup $G$ is called semi-hyperbolic if and only if 
there exists an $N\in \N $ and a $\delta >0$ such that 
for each $x\in J(G)$ and $g\in G$, 
$$\deg (g:V\rightarrow B_{s}(x,\delta ))\leq N$$
for each connected component $V$ of $g^{-1}(B_{s}(x,\delta )).$ 
\edfn

\fr The crucial tool, which makes all further considerations possible, is given by the
following semigroup version of Mane's Theorem proved in \cite{hiroki2}.

\bthm\label{t2.9h3}
Let $G=\langle f_{1},\ldots ,f_{u}\rangle $ be a finitely generated 
rational semigroup. Assume that there exists an element of $G$ with 
degree at least two, that each element of {\em Aut}$(\oc )\cap G$ (if this is not empty) is 
loxodromic, and that $F(G)\neq \emptyset .$ 
Then, $G$ is semi-hyperbolic if and only if all of the following conditions are satisfied.  

\begin{itemize}
\item[(a)] For each $z\in J(G)$ there exists a neighborhood $U$ of $z$ in $\oc $ 
such that for any sequence $\{ g_{n}\} _{n=1}^{\infty }$ in $G$, 
any domain $V$ in $\oc $ and any point 
$\zeta \in U$, the sequence $\{ g_{n}\} _{n=1}^{\infty }$ does not converge 
to $\zeta $ locally uniformly on $V.$ 
\item[(b)] For each $j=1,\ldots ,u$, each $c\in \Crit(f_{j})\cap J(G)$ 
satisfies $\dist(c, G^{\ast }(f_{j}(c)))>0.$ 
\end{itemize}
\ethm

%
%
%
%
\ni The first author proved in \cite{hiroki2} the following.

\bthm\label{t1h4}
Let $G=\langle f_{1},\ldots ,f_{u}\rangle $ be a semi-hyperbolic finitely generated 
rational semigroup. Assume that there exists an element of $G$ with  
degree at least two, that each element of {\em Aut}$(\oc )\cap G$ (if
this is not empty) is  
loxodromic, and that $F(G)\neq \emptyset .$ 
Then there exist 
$R>0$, $C>0$, and $\a>0$ such that if $x\in J(G)$, $\om \in \Sg
_{u}^{\ast }$ and $V$ is a connected component  
of $f_{\om }^{-1}(B_{s}(x,R))$, then $V$ is simply connected and
$\diam_{s}(V)\le Ce^{-\a|\om |}$. 
\ethm

\

Throughout the rest of the paper, we assume the following: 

\noindent {\bf Assumption ($\ast $):} 
\begin{itemize}
\item 
Let $f=(f_{1},\ldots ,f_{u})\in (\mbox{Rat})^{u}$ be a multi-map and 
let $G=\langle f_{1},\ldots ,f_{u}\rangle .$ 
\item There exists an element $g$ of $G$ such that $\deg (g)\geq 2.$
\item Each element of Aut$(\oc )\cap G$ (if this is not empty) is loxodromic.  
\item 
$G$ is semi-hyperbolic.
\item 
$G$ satisfies the nice open set condition. 
\end{itemize}

In order to prove the main results (Theorem~\ref{Theorem A} etc.), in
virtue of \cite{ncp1} and \cite{ncp2},
we may assume that $u\geq 2.$ If $u\geq 2$, then the open set condition 
implies that $F(G)\neq \emptyset .$ 
Hence, conjugating $G$ by some element of Aut$(\oc )$ if necessary, we may assume that  
$J(G)\subset \C .$ 
Thus, throughout the rest of the paper, in addition to the above assumption, we also assume that 
\begin{itemize}
\item $u\geq 2$ and $J(G)\subset \C .$ 
\end{itemize} 
Note that in Theorem~\ref{Theorem A}, we work with the spherical distance. However, 
throughout the rest of the paper, we will work with the Euclidian distance. If 
we want to get the results on the spherical distance (and this would include the case $u=1$), 
then we have only to consider some minor modifications in our argument.  

We now give further notation. 
A pair $(c,j)\in\oc\times\{1,2,\ld,u\}$ is called critical if $f_j'(c)=0$. The
set of all critical pairs of $f$ will be denoted by $\CP(f)$. 
Let $\Crit(f)$ be the union of $\bigcup _{j=1}^{u}\Crit (f_{j}).$ 
For every $c\in\Crit(f)$
put 
$$
c_+=\{f_j(c):(c,j)\in\CP(f)\}.
$$
The set $c_+$ is called the set of critical values of $c$. 
For any subset $A$ of $\Crit(f)$ put
$$
A_+=\{c_+:c\in A\}.
$$
For each $(c,j)\in \CP(f)$ let $q(c,j)$ be the local order of $f_{j}$ at $c.$ 
For any set $F\sbt\oc$, set
$$
\om_G(F)=\bi_{n=0}^\infty\ov{\bigcup _{|\om |\geq n}f_{\om}(F)}.
$$
The latter is called the $\om$-limit set of $F$ with respect to the semigroup $G$. 
Similarly, for every set $B\sbt\Sg_u\times\oc$,
\beq\label{referee1}
\om(B)=\bi_{N=0}^\infty\ov{\bigcup _{n\geq N}\tilde{f}^n(B)},
\eeq
and this set is called the $\om$-limit set of $F$ with respect to the skew product map
$\tf:\Sg_u\times\oc\to\Sg_u\times\oc$.

\

\

\ni Given $\om\in\Sg_u^*$, $j\in\{1,2,\ld,u\}$, $z\in \tf_\om^{-1}(J(\tf))$ and $r>0$, we say
that a critical pair $(c,j)$ sticks to $\Comp(z,f_\om,r)$ if $c\in\Comp(z,f_\om,r)$ and
$j=\om_1$. We then write
$$
(c,j)\rel\Comp(z,f_\om,r).
$$ 
Set
$$
A=A_f:=\max\{A(f_j,c):(c,j)\in\CP(f)\} \  \text{ and }  \  R_f:=\min\{R(f_j,c):(c,j)\in\CP(f)\} .
$$
\ni For $A$, $B$, any two subsets of a metric space put
$$
\dist(A,B)=\inf\{\dist(a,b): a\in A, b\in B\}
$$\index{dist(A,B)@$\dist(A,B)$}
and
$$
\Dist(A,B)=\sup\{\dist(a,b): a\in A, b\in B\}.
$$
Fix a positive $\b$ smaller than the following four positive numbers (a)--(d).
$$
\mbox{(a)}\ \min\{\dist(c,G^{\ast }(c_+)):c\in\Crit(f)\cap J(G)\}, \ \mbox{(b)}\ R_f, \ 
\mbox{(c)}\ \min\{|c'-c|:c,c'\in\Crit(f)\cap J(G),c\ne c'\},$$ 
and $$\mbox{(d)}\ \dist(\bigcup _{j=1}^{u}CV(f_{j})\cap F(G),\ J(G)), $$
where, (a) is positive because of semi-hyperbolicity (Theorem~\ref{t2.9h3}). It immediately follows 
from Theorem~\ref{t1h4} that there exists $\g\in(0,1/4)$ so
small that if $g\in G^{\ast }$ and $g(x)\in J(G)$, then  
\beq\label{2.25ku1h7}
\text{Comp}(x,g,2\gamma )\subset \C \ \ \mbox{\ and \ } \diam(\Comp(x,g,2\g))<\b.
\eeq
We shall prove the following.

\

\blem\label{l2.16ku1h7}  
Fix $\eta\in (0, \beta )$, an integer $n\ge 0$ and $(\om,z)\in J(\tf)$. Suppose that for every 
$0\le k\le n-1$,
$$
\diam\(\Comp\(f_{\om|_k}(z),f_{\om|_{k+1}^n},\eta\)\)\le\b.
$$
Then each connected component $\Comp\(f_{\om|_k}(z),f_{\om|_{k+1}^n},\eta\)$ is 
stuck to by at most one critical pair $(c,j)$ of $f$; and if a critical pair $(c,j)$ sticks to 
a component $\Comp\( f_{\om |_{k}}(z), f_{\om |^{n}_{k+1}}, \eta \)$, then $f_{j}(c)\in J(G).$  
Furthermore, each critical pair of $f$ sticks to at most one of all these components 
$\Comp\( f_{\om |_{k}}(z), f_{\om |^{n}_{k+1}}, \eta )$. 
\elem
{\sl Proof.} The first part is obvious by the choice of $\b$. In order to prove 
the second part suppose that
$$
(c,\om_{k+1})\rel \Comp\(f_{\om|_k}(z),f_{\om|_{k+1}^n},\eta\) 
\  \text{ and } \ 
(c,\om_{l+1})\rel \Comp\(f_{\om|_l}(z),f_{\om|_{l+1}^n},\eta\)
$$
with some $0\le k<l\le n-1$ and $\om_{k+1}=\om_{l+1}$. 
Then both $c$ and $f_{\om|_{k+1}^{l}}(c)$
belong to $\Comp\(f_{\om|_l}(z),f_{\om|_{l+1}^n},\eta\)$, and therefore, 
$$
|f_{\om|_{k+1}^l}(c)-c|\le \diam\(\Comp\(f_{\om|_l}(z),f_{\om|_{l+1}^n},\eta\)\)\le\b,
$$
contrary to the choice of $\b$. \endpf

\

\ni Let 
$$
\ka=\Pi_{(c,j)\in\CP(f)}q(c,j)^{-1}.
$$
\blem\label{lku12.17h7}
If $g\in G$ and $z\in g^{-1}(J(G))$, then
$$
\Mod\(\Comp(z,g,2\g)\sms \overline{\Comp(z,g,\g)}\)\ge {\ka\log 2\over \#\CP(f)}.
$$
\elem

{\sl Proof.} By Lemma~\ref{l2.16ku1h7} there exists a geometric annulus $R\sbt
B(g(z),2\g)\sms B(g(z),\g)$ centered at $g(z)$ and with modulus $\ge \log2/\#\CP(f)$
and such that $g^{-1}(R)\cap\Comp(z,g,2\g)\cap\Crit(f)=\es$. Since covering maps
increase moduli of annuli by factors at most equal to their degrees, we conclude that
$$
\Mod\(\Comp(z,g,2\g)\sms \overline{\Comp(z,g,\g)}\)
\ge \Mod(R_g)
\ge {\log2/\#\CP(f)\over \Pi_{(c,j)\in\CP(f)}q(c,j)}
={\ka\log 2\over \#\CP(f)},
$$
where $R_g\sbt \Comp(z,g,2\g)$ is the connected component of $R$ enclosing 
$\Comp(z,g,\g)$. \endpf

\

\ni As an immediate consequence of this lemma and Theorem~\ref{kdt2E} we get the following.

\

\blem\label{lku22.18h9}
Let $\om \in \Sg _{u}^{\ast }$ and 
suppose $f_\om(z)\in J(G)$. If $0\le k\le|\om|$ and the map $f_{\om|_k}:\Comp(z,f_\om,2\g)
\to \Comp(f_{\om|_k}(z),f_{\om|_{k+1}^{|\om|}},2\g)$ is univalent, then
$$
{|f_{\om|_k}'(y)|\over |f_{\om|_k}'(x)|}\le \const
$$
for all $x,y\in \Comp(z,f_\om,\g)$, where const is a number depending only on $\#\CP(f)$
and $\ka$. 
\elem

\

\blem\label{lku2.19h9}
Suppose that $g\in G$ and $g(z)\in J(\tf)$. Suppose also that 
$Q^{(1)}\sbt Q^{(2)}\sbt
B(g(z),\g)$ are connected sets. If $Q_g^{(2)}$ is a connected
component of $g^{-1}(Q^{(2)})$ contained in $\Comp(z,g,\g)$
and $Q_g^{(1)}$ is a connected component of $g^{-1}(Q^{(1)})$
contained in $Q_g^{(2)}$, then
$$
{\diam\(Q_g^{(1)}\)\over \diam\(Q_g^{(2)}\)} 
\ge \Ga{\diam\(Q^{(1)}\)\over \diam\(Q^{(2)}\)}
$$
with some universal constant $\Ga>0$ depending on $f$ only. 
\elem
{\sl Proof.} Write $g=f_\om$, where $\om\in\Sg_u^*$ and put $n=|\om|$. For every
$0\le j\le n$, set
$$
Q_j^{(1)}=f_{\om|_{n-j}}(Q_g^{(1)}) \  
\text{ and } \
Q_j^{(2)}=f_{\om|_{n-j}}(Q_g^{(2)}).
$$
Let $1\leq n_1\leq \ldots \leq n_v\leq n$  be all
the integers $k$ between $1$ and $n$  such that
$$ 
\Crit(f_{\om_{n-k+1}}) \cap \Comp(f_{\om|_{n-k}}(z),f_{\om_{n-k+1}^n},2\g)\neq \es.
$$
Fix $1\leq i \leq v $. If $ j \in [ n_i, n_{i+1}-1]$ (we set
$n_{v+1}=n-1$), then  by Lemma~\ref{lku22.18h9} there exists a
universal constant $T>0$ such that
\begin{equation}\label{1020907}
\frac{\diam(Q_{j}^{(1)})}{\diam(Q_{j}^{(2)})} 
\geq T \frac{\diam(Q_{n_i}^{(1)})}{\diam(Q_{n_i}^{(2)})}.
\end{equation}
Since, in view of Lemma~\ref{l2.16ku1h7}, $v \leq \#\CP(f)$, in order   
to conclude  the proof  it is enough  to show
the existence  of a universal   constant $E >0$ such that  for every
$ 1  \leq i \leq u$
$$\frac{\diam(Q^{(1)}_{n_i})}{\diam(Q^{(2)}_{n_i})}
\geq E \frac{\diam(Q^{(1)}_{n_i-1})}{\diam(Q^{(2)}_{n_i-1})}.
$$
Indeed, let $c$   be the  critical  point   in $\Comp(f_{\om|_{n-n_i}}(z),
f_{\om|_{n_i+1}^n}, 2\g)$ and let $q$  be its  order. Since   both
sets $Q^{(1)}_{n_i}$ and  $Q^{(1)}_{n_i}$  are connected, we get for
$j=1,2$ that
$$\begin{aligned}
\diam(Q_{n_i-1}^{(j)})
&\comp \diam(Q_{n_i}^{(j) })\sup\{|f_{\om_{n-n_i+1}}'(x)|:x\in Q_{n_i}^{(j)}\} \\
&\comp \diam(Q_{n_i}^{(j)})\Dist(c, Q_{n_i}^{(i)}).
\end{aligned}
$$
Hence
$$\begin{aligned}
\frac{\diam(Q_{n_i}^{(1)})}{\diam(Q_{n_i}^{(2)})}& \comp
\frac{\diam(Q_{n_i-1}^{(1)})}{\Dist(c, Q_{n_i}^{(1)})} \cdot
\frac{\Dist(c, Q_{n_i}^{(2)})}{\diam(Q_{n_i-1}^{(2)})} \\
& \ge \frac{\diam(Q_{n_i-1}^{(1)})}{ \diam(Q_{n_i-1}^{(2)})}.
\end{aligned}
$$
The proof is finished. \endpf

\section{Partial Order in $\Crit(f)\cap J(G)$ and Stratification of $J(G)$}
\ni In this section we introduce a partial order in the critical set $\Crit(f)\cap J(G)$ 
and stratification of $J(G)$. They will be used to do the inductive steps in
the proofs of the main theorems of our paper. We start with the following.

\

\blem\label{lku22.20h11}
The set $\om_G((\Crit(f)\cap J(G))_+)$ is nowhere dense in $J(G)$.
\elem

{\sl Proof.} Suppose on the contrary that the interior (relative to $J(G)$) of 
$\om_G((\Crit(f)\cap J(G))_+)$ is not empty. Then, there exists a
critical point $c\in \Crit(f)\cap J(G)$ 
such that $\om_G(c_+)$ has non-empty interior. But then, in virtue of
Proposition~\ref{p1h10} 
there would exist finitely many elements $g_1,g_2,\ld,g_n\in G$ such that 
$\om_G(c_+)\spt g_1(\om_G(c_+))\cup g_2(\om_G(c_+))\cup\ld\cup
g_n(\om_G(c_+))\spt J(G)$. Hence 
$c\in \om_G(c_+)$, contrary to the non-recurrence condition (Theorem~\ref{t2.9h3}). \endpf

\

\ni Now we introduce in $\Crit(f)\cap J(G)$ a relation $<$ which, in
view of Lemma~\ref{lku22.21h11} below, is an ordering relation. Put
$$
c_1<c_2 \eqv c_1\in\om_G({c_2}_+).
$$
\blem\label{lku22.21h11}
If $c_1<c_2$ and $c_2<c_3$, then $c_1<c_3$.
\elem

{\sl Proof.} Since $c_2\in\om_G({c_3}_+)$, we have
$\om_G({c_2}_+)\sbt\om_G({c_3}_+)$. Along with 
$c_1\in\om_G({c_2}_+)$ this implies that $c_1\in\om_G({c_3}_+)$,
meaning that $c_1<c_3$. \endpf 

\

\blem\label{l1h10}
There exists no $c\in \Crit(f)\cap J(G)$ such that $c<c$.
\elem 

{\sl Proof.} Indeed, $c<c$ means that $c\in\om_G(c_+)$, contrary to the 
non-recurrence condition. \endpf

\

\ni Since the set $\Crit(f)\cap J(G)$ is finite, as an immediate
consequence of this lemma and Lemma~\ref{lku22.21h11} we get the following.

\

\blem\label{lku22.22h11}
There is no infinite linear subset of the partially ordered set
$(\Crit(f)\cap \nolinebreak J(G),<\nolinebreak ).$ 
\elem

\

\ni Now define inductively a sequence $\{Cr_i(f)\}$ of subsets of $\Crit(f)\cap J(G)$ 
by setting $Cr_0(f)=\es$ and
\begin{equation}\label{ku22.29}
Cr_{i+1}(f)=
\left\{c\in (\Crit(f)\cap J(G))\sms\bu_{j=0}^iCr_j(f):c'<c, \imp c'\in
  \bu_{j=0}^iCr_j(f)\right\}. 
\end{equation}

\

\blem\lab{lu5.5h13} The following four statements hold.
\begin{itemize}
\item[(a)] The sets $\{Cr_i(f)\}$ are mutually disjoint.
\item[(b)] $\exists_{p\ge 1} \  \forall_{i\ge p+1} \  \  Cr_i(f)=\es$.
\item[(c)] $Cr_0(f)\cup\ld\cup Cr_p(f)=\Crit(f)\cap J(G)$.
\item[(d)] $Cr_1(f)\ne\es$.
\end{itemize}
\elem

\fr{\sl Proof.} By definition $Cr_{i+1}(f)\cap\bu_{j=0}^iCr_j(f)=\es$, whence disjointness 
in (a) is clear. As the set $\Crit(f)\cap J(G)$ is finite, (b) follows from (a). Take $p$
to be the minimal number satisfying (b) and suppose that
$(\Crit(f)\cap J(G))\sms\bu_{j=0}^pCr_j(f) 
\ne\es$. Take $c\in (\Crit(f)\cap J(G))\sms\bu_{j=0}^pCr_j(f)$. Since
$Cr_{p+1}=\es$, there would 
thus exist $c'\in (\Crit(f)\cap J(G))\sms\bu_{j=0}^pCr_j(f)$ such that
$c'<c$. Iterating this  
procedure we would obtain an infinite sequence $c_1=c>c'=c_2>c_3>\ld$, contrary to 
Lemma~\ref{lku22.22h11}. Now, part (d) follows from (c) and (\ref{ku22.29}). \endpf

\

\ni For every $(\tau,z)\in J(\tf)$ put
$$
\Crit(\tau,z)=\Crit(\tf)\cap\om(\tau,z) 
\  \text{ and } \
\Crit(\tau,z)_+=p_2(\Crit(\tf)\cap\om(\tau,z))_+.
$$
\blem\label{lu5.7h13}
If $(\tau,z)\in J(\tf)$, then 
$
p_2(\om(\tau,z))\not\sbt\ov{G^{\ast }(\Crit(\tau,z)_+)}.
$
\elem

{\sl Proof.} Suppose on the contrary that 
\beq\label{u2.30h14}
p_2(\om(\tau,z))\sbt\ov{G^{\ast }(\Crit(\tau,z)_+)}.
\eeq
Consequently, $\Crit(\tau,z)\ne\es$. Let $(\tau^1,c_1)\in\Crit(\tau,z)$. This
means that $(\tau^1,c_1)\in \om(\tau,z)$, and it follows from (\ref{u2.30h14})
that there exists $(\tau^2,c_2)\in\Crit(\tau,z)$ such that either $c_1\in\om_G({c_2}_+)$
or $c_1=g_1(c_2)$ for some $g_1\in G$ of the form $f_\om$ with $f_{\om_1}'(c_2)=0$.
Iterating this procedure we obtain an infinite sequence $((\tau^j,c_j))_{j=1}^\infty$
of points in $\Crit(\tau,z)$ such that for every $j\ge 1$ either $c_j\in\om_G({c_{j+1}}_+)$
or $c_j=g_j(c_{j+1})$ for some $g_j\in G$ of the form $f_\rho$ with $f_{\rho_1}'(c_{j+1})=0$.
Consider an arbitrary block $c_k,c_{k+1},\ld,c_l$ such that $c_j=g_j(c_{j+1})$ for 
every $k\le j\le l-1$, and suppose that $l-(k-1)\ge \#(\Crit(f)\cap J(G))$. Then there are 
two indices $k\le a<b\le l$ such that $c_a=c_b$. Hence $g_a\circ g_{a+1}\circ\ld\circ
g_{b-1}(c_b)=c_a=c_b$ and $(g_a\circ g_{a+1}\circ\ld\circ g_{b-1})'(c_b)=0$.
This however contradicts our assumption that the Julia set of $G$ contains no superstable
fixed points. In consequence, the length of the block $c_k,c_{k+1},\ld,c_l$ is
bounded above by $\#(\Crit(f)\cap J(G))$. Therefore, there exists an infinite sequence 
$(j_n)_{n=1}^\infty$ such that $c_{j_n}\in\om_G({c_{j_n+1}}_+)$ for all $n\ge 1$. 
This however
contradicts Lemma~\ref{lku22.22h11} and finishes the proof. \endpf

\

\ni Now, for every $i=0,1,\ld,p$, set
$$
S_i(f)=Cr_0(f)\cup Cr_1(f)\cup\ld\cup Cr_i(f).
$$
Fix $i\in\{0,1,\ld,p-1\}$ consider an arbitrary point $c'\in\bu_{c\in Cr_{i+1}(f)}\om_G(c_+)\cap 
\Crit(f)\cap J(G)$. Then there exists $c\in Cr_{i+1}(f)$ such that $c'\in\om_G(c_+)$ which 
equivalently means that $c'<c$. Thus, by (\ref{ku22.29}) we get $c'\in S_i(f)$. So, 
\beq\label{u5.8h15}
\bu_{c\in Cr_{i+1}(f)}\om_G(c_+)\cap \( (\Crit(f)\cap J(G))\sms S_i(f)\)=\es.
\eeq
Since the set $\bu_{c\in Cr_{i+1}(f)}\om_G(c_+)$ is compact and 
$(\Crit(f)\cap J(G))\sms S_i(f)$ is finite, we therefore get
\beq\label{u5.9h15}
\d_i=\dist\lt(\bu_{c\in Cr_{i+1}(f)}\om_G(c_+), (\Crit(f)\cap J(G))\sms S_i(f)\rt)>0.
\eeq
Set
$$
\rho=\min\{\d_i/2:i=0,1,\ld,p-1\}
$$
and for every $i=0,1,\ld,p$ define
\beq\label{1h17}
J_i(G)=\{z\in J(G):\dist\(G^{\ast }(z), (\Crit(f)\cap J(G))\sms S_i(f)\)\ge\rho\}.
\eeq
We end this section with the following two lemmas concerning the sets $J_i(G)$.

\

\blem\label{lu5.8h17}
$\es\ne J_0(G)\sbt J_1(G)\sbt\ld\sbt J_p(G)=J(G)$.
\elem

{\sl Proof.} Since $S_{i+1}(f)\spt S_i(f)$, the inclusions $J_i(G)\sbt 
J_{i+1}(G)$ are obvious. Since $S_p(f)=\Crit(f)\cap J(G)$ (see Lemma~\ref{lu5.5h13}), it 
holds $J_p(G)=J(G)$. 
We get from (\ref{u5.9h15}) that
$$
\dist\lt(\bu_{c\in Cr_1(f)}\om_G(c_+), (\Crit(f)\cap J(G))\sms S_0(f)\rt)=\d_0\ge 2\rho\ge\rho.
$$
Thus, 
$\bu_{c\in Cr_1(f)}\om_G(c_+)\sbt J_0(G)$, and since $Cr_1(f)\ne\es$ (see 
Lemma~\ref{lu5.5h13}), we conclude that $J_0(G)\ne\es$. The proof is complete. \endpf

\

\blem\label{lu5.9h17}
There exists $l=l(f)\ge 0$ so large that for all $i=0,1,\ld,p-1$ we have
$$
\bu_{c\in Cr_{i+1}(f)}\om_G(c_+)\cap J(G)\sbt\ov{\bu_{|\tau|\ge
    l}f_{\tau }(Cr_{i+1}(f)_+)} \cap J(G) 
=\ov{G^{\ast }\(\bu_{|\tau|=l}f_{\tau }(Cr_{i+1}(f)_+)\)} \cap J(G)
\sbt J_i(G).
$$
\elem
{\bf Proof.} The left-hand inclusion is obvious regardless of what $l(f)$ is. 
The equality part of the assertion is obvious. In order to prove the right-hand 
inclusion fix $i\in\{0,1,\ld,p-1\}$. By the definition of the $\om$-limit sets of 
$G$ there exists $l_i\ge 0$ such that for every $c\in Cr_{i+1}(f)$ we have 
$\dist\(\bu_{|\tau |\ge l_i}f_{\tau }(c_+),\bu_{c'\in Cr_{i+1}(f)}\om_G(c_+')\)<\d_i/2$. Thus, 
by (\ref{u5.9h15}), $\dist\(\bu_{|\tau |\ge l_i}f_{\tau }(c_+),
(\Crit(f)\cap J(G))\sms S_i(f)\)>\d_i/2 
\ge \rho $. Hence, for every $\tau \in \Sg _{u }^{\ast }$ with $|\tau |\ge l_i$, we have 
$\dist\(f_{\tau }(c_+),
(\Crit(f)\cap J(G))\sms S_i(f)\)>\rho$. Thus $f_{\tau }(c_+)\sbt J_i(G)$. Therefore, 
$\bu_{|\tau |\ge l_i}f_{\tau }(c_+)\sbt J_i(G)$, and consequently, 
$\bu_{|\tau |\ge l_i}f_{\tau }(Cr_{i+1}(f)_+)\sbt J_i(G)$. Since $J_i(G)$ is a closed set, 
this yields that $\ov{\bu_{|\tau |\ge l_i}f_{\tau }(Cr_{i+1}(f)_+)}\sbt J_i(G)$. Setting
$l(f)=\max\{l_i:i=0,1,\ld,p-1\}$ completes the proof. \endpf

\section{Holomorphic Inverse Branches.}  

\ni In this section we prove the existence of suitable holomorphic inverse branches, 
our basic tools throughout the paper. Set 
$$
\Sing(\tf)=\bu_{n\ge 0}\tf^{-n}(\Crit(\tf))
$$
and
$$
\Sing(f)=\bu_{g\in G^{\ast }}g^{-1}(\Crit(f)).
$$
Recall that according to the formula \eqref{referee1}, given a point
$(\tau,z)\in\Sg_u\times\oc$, the set $\om(\tau,z)$ is the $\om$-limit set of
$(\tau,z)$ with respect to the skew product map 
$\tf:\Sg_u\times\oc\to\Sg_u\times\oc$.

\

\bprop\label{pu6.1h23}
For each $(\tau,z)\in J(\tf)\sms \Sing(\tf)$, there exists a number
$\eta(\tau,z)$ with $0<\eta (\tau ,z)<\gamma $, an increasing 
sequence $(n_j)_{j=1}^\infty$ of positive integers and a point $(\hat\tau,\hat z)\in
\om(\tau,z)\sms p_2^{-1}\(\ov{G^{\ast }(\Crit(\tau,z)_+)}\)$ with the
following two properties. 
\begin{itemize}
\item[(a)] $\lim_{j\to\infty}\tf^{n_j}(\tau,z)=(\hat\tau,\hat z)$,
\item[(b)] $\Comp\(z,f_{\tau|_{n_j}},\eta(\tau,z)\)\cap \Crit(f_{\tau|_{n_j}})=\es$. 
\end{itemize}
\eprop

{\sl Proof.} In view of Lemma~\ref{lu5.7h13} there exists a point $(\hat\tau,\hat z)\in
\om(\tau,z)$ such that $\hat z\notin\ov{G^{\ast }(\Crit(\tau,z)_+)}$. Let
$$
\eta={1\over 2}\dist\(\hat z, \ov{G^{\ast }(\Crit(\tau,z)_+)} \).
$$
Then there exists an infinite increasing sequence $(n_j)_{j=1}^\infty$ of positive integers 
such that
\beq\label{u2.39h23}
\lim_{j\to\infty}\tf^{n_j}(\tau,z)=(\hat\tau,\hat z)
\eeq
and 
\beq\label{u2.40h23}
f_{\tau|_{n_j}}(z)\notin B\( \ov{G^{\ast }( \Crit (\tau,z)_+)},\eta \)
\eeq
for all $j\ge 1$. We claim that there exists $\eta(\tau,z)>0$ such that for all
$j\ge 1$ large enough 
$$
\Comp\(z,f_{\tau|_{n_j}},\eta(\tau,z)\)\cap \Crit(f_{\tau|_{n_j}})=\es.
$$
Indeed, otherwise we find an increasing subsequence $(j_i)_{i=1}^\infty$ and a 
decreasing to zero sequence of positive numbers $\eta_i<\eta$ such that
$$
\Comp\(z,f_{\tau|_{n_{j_i}}},\eta_i\)\cap \Crit(f_{\tau|_{n_{j_i}}})\ne\es.
$$
Let $\^c_i\in\Comp\(z,f_{\tau|_{n_{j_i}}},\eta_i\)\cap \Crit(f_{\tau|_{n_{j_i}}})$. Then
there exist $0\le p_i\le n_{j_i}-1$ and 
\beq\label{1h23}
c_i\in\Crit(f_{\tau_{p_{i}+1}})
\eeq
such that $c_i=f_{\tau |_{p_i}}(\^c_i)$. Since $\lim_{i\to\infty}\eta_i=0$, it follows 
from Theorem~\ref{t1h4} that $\lim_{i\to\infty}\^c_i=z$. Since $(\tau,z)
\notin \bu_{n\ge 0}\tf^{-n}(\Crit(\tf))$, this implies that $\lim_{i\to\infty}p_i=+\infty$.
But then, making use of Theorem~\ref{t1h4} again and of the formula 
$(\sg^{p_i}(\tau),c_i)=\tilde{f}^{p_i}(\tau,\^c_i)$, we conclude that the set of accumulation
points of the sequence $((\sg^{p_i}(\tau),c_i))_1^\infty$ is contained in $\om(\tau,z)$.
Fix $(\tau^\infty,c)$ to be one of these accumulation points. Since $\Crit(\tf)$ is closed
we conclude that 
\beq\label{1h24}
(\tau^\infty,c)\in \Crit(\tau,z).
\eeq
Since that set $\Crit(f)$ is finite, passing to a subsequence, we may assume without
loss of generality that $(c_i)_1^\infty$ is a constant sequence, so equal to $c$. 
Since $c=f_{\tau |_{p_i}}(\^c_i)$, we get
$$
\lt|f_{\tau|_{n_{j_i}}}(z)-f_{\tau|_{p_i+1}^{n_{j_i}}}(c)\rt|
=\lt|f_{\tau|_{n_{j_i}}}(z)-f_{\tau|_{n_{j_i}}}(\^c_i)\rt|
<\eta_i
<\eta.
$$
But, looking at (\ref{1h23}) and (\ref{1h24}), we conclude that $f_{\tau|_{p_i+1}^{n_{j_i}}}(c)
\in G^{\ast }(\Crit(\tau,z)_+)$. We thus arrived at a contradiction with (\ref{u2.40h23}),
and the proof is finished. \endpf

\

\bcor\label{cu6.3h25}
If $(\tau,z)\in J(\tf)\sms \Sing(\tf)$, then $\limsup_{n\to\infty}|f_{\tau|_n}'(z)|=+\infty$.
\ecor

{\sl Proof.} Let $(n_j)_{j=1}^\infty$ and $\eta(\tau,z)$ be produced by 
Proposition~\ref{pu6.1h23}. Then, by this proposition and Theorem~\ref{t1h4}, the family 
$\lt\{f_{\tau|_{n_{j}},z}^{-1}:B\(f_{\tau|_{n_{j}}}(z),\eta(\tau,z)\)\to\C\rt\}_{j=1}^\infty$
of holomorphic inverse branches of $f_{\tau|_{n_{j}}}$ sending $f_{\tau|_{n_{j}}}(z)$
to $z$ is well defined and normal. As a matter of fact we mean here this family
restricted to the disk $B(\hat z,\eta(\tau,z)/2)$ and $j\ge 1$ large enough.
Therefore by Theorem~\ref{t1h4} again,
$\lim_{j\to\infty}|f_{\tau|_{n_j}}'(z)|^{-1}=0$ and we are
done. \endpf 

\

\ni We end this section with the following.
Let $\| \tf'\| =\sup _{w\in J(\tf)} |\tf '(w)|.$ 
\

\bprop\label{pu6.4h27}
Fix $\th\in(0,\min\{1,\g\})$. For all $(\tau,z)\in J(\tf)$ and  $r>0$ 
there exists a minimal integer $s=s(\th,(\tau,z),r)\ge 0$ with the following 
properties (a) and (b).
\begin{itemize}
\item[(a)] $|(\tf^s)'(\tau,z)|\ne 0$.
\item[(b)] Either $r|(\tf^s)'(\tau,z)|>||\tf'||^{-1}$ or there exists 
$c\in\Crit(f_{\tau_{s+1}})$ such that 
$f_{\tau _{s+1}}(c)\in J(G)$ 
and
$$
|f_{\tau|_s}(z)-c|\le \th r|f_{\tau|_s}'(z)|.
$$
In addition, for this $s$, we have  
\item[(c)] 
$\theta r|f_{\tau |_{s}}'(z)|\leq \theta <\gamma $ and 
$$
\Comp\(z, f_{\tau|_s},(KA_{f}^2)^{-1}2^{-\#\Crit(f)}\th r|f_{\tau|_s}'(z)|\)
     \cap\Crit(f_{\tau|_s})=\es.
$$
\end{itemize}
\eprop

{\sl Proof.} First note that the set of integers $(\ge 0)$ 
satisfying conditions (a) and (b) is not empty. Indeed, if
$(\tau,z)\in J(\tf)\sms \Sing(\tf)$, then this follows directly from
Corollary~\ref{cu6.3h25}. If, on the other hand, $(\tau,z)\in
\Sing(\tf)$, then just consider the least integer non-negative, call it $l$, such
that $\tf^l(\tau,z)\in\Crit(\tf)$. Let $s$ be the minimum of those
numbers. Then conditions (a) and 
(b) are satisfied. If $s=0$ then (c) is also satisfied since the identity map 
has no critical points. So, we may assume that $s\ge 1$. By the definition of $s$
we have $r|(\tf^{s-1})'(\tau,z)|\le ||\tf'||^{-1}$, whence
$$
\aligned
\th r|f_{\tau|_s}'(z)|
&    =\th r|(\tf^s)'(\tau,z)|
     =\th r|(\tf^{s-1})'(\tau,z)|\cdot|\tf'(\tf^{s-1}(\tau,z))| \\
&\le  \th ||\tf'||^{-1}||\tf'||
     =\th
     <\g.
\endaligned
$$
Thus, (\ref{2.25ku1h7}) yields for all $0\le j\le s$ that
$$
\diam\(\Comp\(f_{\tau|_{s-j}}(z),f_{\tau|_{s-j+1}^s},\th r|f_{\tau|_s}'(z)|\)\)\le \b.
$$
It therefore follows from Lemma~\ref{l2.16ku1h7} that there exist $0\le p\le \#\Crit(f)$,
an increasing sequence of integers $1\le k_1<k_2<\ld<k_p\le s$ and mutually 
distinct critical pairs $(c_1,\tau_{s-k_1+1}),(c_2, \tau_{s-k_2+1}),$ $\ld,
(c_p, \tau_{s-k_p+1})$ such that 
$f_{\tau _{s-k_{l}+1}}(c_{l})\in J(G)$ 
and
$$
\Comp\(f_{\tau|_{s-k_l}}(z),f_{\tau|_{s-k_l+1}^s},\th r|f_{\tau|_s}'(z)|\)
\cap\Crit(f_{\tau_{s-k_l+1}})
=\{c_l\}
$$
for every $l=1,2,\ld,p$, and, in addition, if $j\notin\{k_1,k_2,\ld,k_p\}$, then
\beq\label{u6.12h29}
\Comp\(f_{\tau|_{s-j}}(z), f_{\tau|_{s-j+1}^s},\th r|f_{\tau|_s}'(z)|\)
\cap\Crit(f_{\tau _{s-j+1}})
=\es.
\eeq
Setting $k_0=0$, we shall prove by induction that for every $0\le l\le  p$, we have
\beq\label{u6.13h29}
\Comp\(f_{\tau|_{s-k_l}}(z), f_{\tau|_{s-k_l+1}^s},(KA_{f}^2)^{-1}2^{-l}\th r|f_{\tau|_s}'(z)|\)
\cap\Crit(f_{\tau|_{s-k_l+1}^s})
=\es,
\eeq
where $f_{\tau _{v}^{s}}=Id $ if $s<v.$ 
Indeed, for $l=0$ there is nothing to prove. So, suppose that (\ref{u6.13h29}) is
true for some $0\le l\le  p-1$. Then using (\ref{u6.12h29}) we get
$$
\Comp\(f_{\tau|_{s-k_{l+1}+1}}(z),f_{\tau|_{s-k_{l+1}+2}^s},(KA_{f}^2)^{-1}2^{-l}\th r|f_{\tau|_s}'(z)|\)
\cap\Crit(f_{\tau|_{s-k_{l+1}+2}^s})
=\es.
$$
So, if 
$$
c_{l+1}\in 
\Comp\(f_{\tau|_{s-k_{l+1}}}(z),f_{\tau|_{s-k_{l+1}+1}^s},(KA_{f}^2)^{-1}2^{-(l+1)}\th r|f_{\tau|_s}'(z)|\),
$$
then, by Lemma~\ref{lncp12.11} applied to holomorphic maps $H=f_{\tau_{s-k_{l+1}+1}}$
and $Q=f_{\tau|_{s-k_{l+1}+2}^{s}}$, $z$ being $f_{\tau|_{s-k_{l+1}}}(z)$ and the radius $R=
(KA_{f}^2)^{-1}2^{-(l+1)}\th r|f_{\tau|_s}'(z)|<\g$, we get
$$
\aligned
\lt|f_{\tau|_{s-k_{l+1}}}(z)-c_{l+1}\rt|
& \le KA_{f}^2\lt|f_{\tau|_{s-k_{l+1}+1}^s}'\(f_{\tau|_{s-k_{l+1}}}(z)\)\rt| ^{-1}
      (KA_{f}^2)^{-1}2^{-(l+1)}\th r|f_{\tau|_s}'(z)| \\
& =   2^{-(l+1)}\th r|f_{\tau|_{s-k_{l+1}}}'(z)| \\
& \le           \th r|f_{\tau|_{s-k_{l+1}}}'(z)|,
\endaligned
$$
which along with the facts that 
$c_{l+1}\in\Crit\(f_{\tau|_{s-k_{l+1}+1}}\)$ 
and 
$f_{\tau _{s-k_{l+1}+1}}(c_{l+1})\in J(G)$
contradicts 
the definition of $s$ and proves (\ref{u6.13h29}) for $l+1$. In particular, it
follows from (\ref{u6.13h29}) with $l=p$ and (\ref{u6.12h29}) with $j=k_p+1,k_p+2,\ld,
s$, that
$$
\Comp\(z, f_{\tau|_s},(KA_{f}^2)^{-1}2^{-\#\Crit(f)}\th r|f_{\tau|_s}'(z)|\)
     \cap\Crit(f_{\tau|_s})=\es.
$$
We are done. \endpf

\section{Geometric Measures Theory and Conformal Measures; Preliminaries}

\fr In this section we deal in detail with Hausdorff and packing measures
and we also establish some geometrical properties of conformal measures.

\

\subsection{Preliminaries from Geometric Measure Theory; Hausdorff and 
Packing Measures}

\ni Given a subset $A$ of a metric space $(X,d)$, a  
countable family $\{B(x_i,r_i)\}_{i=1}^\infty$ of open balls centered at points
of $A$ is said to be a packing of $A$ if and only if for any pair $i\ne j$
$$
d(x_i,x_j) > r_i + r_j.
$$
Given $t\ge 0$, the $t$-dimensional outer Hausdorff measure 
$\H^t(A)$ of the set $A$ is defined as
$$
\H^t(A)=\sup_{\e>0}\inf\bigl\{\sum_{i=1}^\infty \diam^t(A_i)\bigr\}
$$                      
where infimum is taken over all countable covers $\{A_i\}_{i=1}^\infty$ of the 
set $A$ by sets whose diameters do not exceed $\e$. 

\sp\fr The $t$-dimensional outer packing measure $\Pi^t(A)$ of the set 
$A$ is defined as
$$
\Pi^t(A)= \inf_{\cup A_i=A}\bigl\{\sum_i \Pi^t_*(A_i)\bigr\}
$$
($A_i$ are arbitrary subsets of $A$), where 
$$
\Pi^t_*(A)= \sup_{\e>0}\sup\bigl\{\sum_{i=1}^\infty r_i^t\bigr\}.
$$                      
Here the second supremum is taken over all 
packings $\{B(x_i,r_i)\}_{i=1}^\infty$ of the  
set $A$ consisting of open balls whose radii do not exceed $\e$. 
These two outer measures define countable additive measures on the Borel 
$\sg$-algebra of $X$. 

\sp\fr The definition of the Hausdorff dimension $\HD(A)$ of 
the set $A$ is the following 
$$
\HD(A)=\inf\{t:\H^t(A)=0\}=\sup\{t:\H^t(A)=\infty\}.
$$
Let $\nu$ be a Borel probability measure on $X$. Define the function
$\rho=\rho_{t,\nu}:X\times (0,\infty)\to(0,\infty)$ by
$$
\rho(x,r)={\nu(B(x,r))\over r^t}
$$
The following two theorems (see \cite{PUbook, Fal}, and \cite{MS}) are for 
our aims the key facts from geometric measure theory. Their proofs are an 
easy consequence of Besicovi\v c covering theorem (see \cite{PUbook}) or a 
more elementary $4r$-covering theorem (see \cite{MS}). 

\

\bthm\lab{tncp12.1.} 
Let $\nu$ be a Borel probability measure on $\R^n$ with some $n\ge 1$.
Then there exists a constant $b(n)$ depending only on $n$ 
with the following properties. If $A$ is a Borel subset of $\R^n$ and
$C>0$ is a positive constant such that 
\begin{itemize}
\item[(1)] for all 
$x\in A$
$$
\limsup_{r\to 0} \rho_{t,\nu}(x,r) \ge C^{-1},
$$
then for every Borel subset $E\sbt A$ we have $\H^t(E)\le b(n)C\nu (E)$ and, in 
particular, $\H^t(A)<\infty$, \nl 
or
\item[(2)] for all $x\in A$
$$
\limsup_{r\to 0}\rho_{t,\nu}(x,r) \le C^{-1},
$$
then for every Borel subset $E\sbt A$ we have $\H^t(E)\ge C\nu (E)$.
\item[(1)'] If $t>0$ then 
(1) holds under the weaker assumption that the hypothesis of part (1) is satisfied on the 
complement of a countable set. 
\end{itemize}
\ethm
                      
\

\bthm\lab{tncp12.2.}
Let $\nu$ be a Borel probability measure on $\R^n$ with some $n\ge
1$. Then there exists a constant $b(n)$ depending only on $n$  
with the following properties. If $A$ is a Borel subset of $\R^n$ and
$C>0$ is a positive constant such that 
\begin{itemize}  
\item[(1)] for all $x\in A$
$$
\liminf_{r\to 0}\rho_{t,\nu}(x,r)\le C^{-1},
$$
then for every Borel subset $E\sbt A$ we have $\Pi^t(E)\ge Cb(n)^{-1}\nu(E)$, 
\nl
or
\item[(2)] for all $x\in A$
$$
\liminf_{r\to 0} \rho_{t,\nu}(x,r) \ge C^{-1},
$$
then $\Pi^t(E)\le C\nu (E)$ and, consequently, $\Pi^t(A)<\infty$.  
\item[(1')] If $\rho $ is non--atomic then (1) holds under the weaker 
assumption that the hypothesis of part (1) is satisfied on the
complement of a countable set. 
\end{itemize}
\ethm

\

\section{Conformal Measures; Existence, Uniqueness, and Continuity}

\ni For every $t\ge 0$ and every function $\phi:J(\tf)\to\C$ 
let $\L_t\phi:
J(\tf)\to\C$ be defined by the following formula:
$$
\L_t\phi(y)=\sum_{x\in \tf^{-1}(y)}|\tf'(x)|^{-t}\phi(x).
$$
$\L_t\phi(y)$ is finite if and only if $y\notin\Crit(\tf)$. Otherwise $\L_t\phi(y)$ is
declared to be $\infty$. Iterating this formula we get for all $n\ge 1$ that
$$
\L_t^n\phi(y)=\sum_{x\in \tf^{-n}(y)}|(\tf^n)'(x)|^{-t}\phi(x).
$$
If $y\in J(\tf)\sms p_2^{-1}(G^{\ast }(\Crit(f)_+))$, then
$\L_t^n\1(y)$ is finite for all $n\ge 0$. 
If $\psi:\oc\to\C$, then define $\L_t\psi:\oc\to\C$ by the formula
$$
\L_t\psi(z)=\sum_{i=1}^u\sum_{x\in f_i^{-1}(z)}|f_i'(x)|^{-t}\psi(x).
$$
It will be always clear from the context whether $\L_t$ is applied to a function 
defined on $J(\tf)$ or on a compact neighborhood $\mathcal{A}$ of
$J(G)$. Iterating this formula we get for all $n\ge 1$ that 
\beq\label{1h32}
\L_t^n\psi(z)=\sum_{|\om |=n}\sum_{x\in f_{\om }^{-1}(z)}|f_{\om }'(x)|^{-t}\psi(x).
\eeq
Note that if $\^\psi:J(\tf)\to\C$ is defined by the formula $\^\psi(\tau,z)=\psi(z)$,
then 
$$
\L_t^n\^\psi(\tau,z)=\L_t^n\psi(z)
$$
for all $(\tau,z)\in J(\tf)$. Without confusion we put $\^\1=\1$. Note that $\L_t^n\psi(z)$
is finite for all $z\in \mathcal{A} \setminus  G^{\ast }(\Crit(f)_+)$. For all $z\in \mathcal{A} \sms G^{\ast }(\Crit(f)_+)$ set
$$
\P_z(t)=\limsup_{n\to\infty}{1\over n}\log\L_t^n\1(z)\in(-\infty,+\infty].
$$

\bdfn
 Denote by 
$\mbox{{\em PCV}}(\tf)$ the closure of the postcritical set of $\tf$, i.e.
$$
\mbox{{\em PCV}}(\tf)=\ov{\bu_{n=1}^\infty \tf^n(\Crit(\tf))}.
$$
\edfn 
\blem\label{l1h63}
$\ov{G^{\ast }(\Crit(f)_+)}\cap J(G)$ is a nowhere dense subset of $J(G)$ and $\mbox{{\em PCV}}(\tf)$ is a nowhere dense 
subset of $J(\tf)$. 
\elem

{\sl Proof.} Since, by Lemma~\ref{lku22.20h11}, $\om_G(\Crit(f)_+)\cap J(G)$ is nowhere dense in
$J(G)$ and since the set $G^{\ast }(\Crit(f)_+)$ is countable, it follows from the Baire
Category Theorem the set $\ov{G^{\ast }(\Crit(f)_+)}\cap J(G)$ is nowhere dense. In order to prove 
the second part of our lemma, suppose that $\PCV(\tf)$ is not nowhere dense in $J(\tf)$. 
This means that $\PCV(\tf)$ has non-empty interior, and therefore, because of it forward
invariance and topological exactness of the map $\tf:J(\tf)\to J(\tf)$, we have $\PCV(\tf)
=J(\tf)$. Hence $J(G)=p_2(J(\tf))=p_2(\PCV(\tf))\sbt \ov{G^{\ast }(\Crit(f)_+)}\cap J(G)$, contrary to, the
already proved, first part of the lemma. \endpf

\

We shall prove the following.

\

\blem\label{l1h33}
The function $z\mapsto\P_z(t)$ is constant throughout a neighborhood
of $J(G)\sms\ov{G^{\ast }(\Crit(f)_+)}$. 
\elem

{\sl Proof.} For every $z\in J(G)\sms\ov{G^{\ast }(\Crit(f)_+)}$ fix
$U_z=\{ w\mid |w-z|<r\} $, an open round disk centered at  
$z$ and such that $\{ w\mid |w-z|<2r\} $ is disjoint from $G^{\ast
}(\Crit(f)_+)$. It then directly follows from  
Koebe's Distortion Theorem that the function $w\mapsto\P_w(t)$ is constant on $U_z$.
Now, fix $z_1,z_2\in J(G)\sms\ov{G^{\ast }(\Crit(f)_+)}$. By
\cite[Lemma 3.2]{HM}, there exists $g=f_{\om }\in G$ such that  
$g(U_{z_1})\cap U_{z_2}\cap J(G)\ne\es$. Fix $x\in U_{z_1}$ such that
$g(x)\in U_{z_2}\cap J(G)$. 
Then $x\in J(G)$ and for every $n\ge 1$, $\L_t^{n+|\om |}\1(g(x))\ge
|g'(x)|^{-t}\L_t^n\1(x)$. 
Therefore, $\P_{g(x)}(t)\ge \P_x(t)$. Hence $\P_{z_2}(t)\ge
\P_{z_1}(t)$. Exchanging the roles 
of $z_1$ and $z_2$, we get $\P_{z_1}(t)\ge \P_{z_2}(t)$, and we are done. \endpf

\

\ni By Lemma~\ref{l1h63} the set $J(G)\sms \ov{G^{\ast }(\Crit(f)_+)}$
is not empty. Denote by  
$\P(t)$ the constant common value of the function $z\mapsto \P_z(t)$
on $J(G)\sms \ov{G^{\ast }(\Crit(f)_+)}$. 
$\P(t)$ is called the topological pressure of $t$. Its basic properties are contained
in the following.

\

\blem\label{l1h35}
The function $t\mapsto\P(t)$, $t\ge 0$, has the following properties.
\begin{itemize}
\item[(a)] $\P(t)$ is non-increasing. In particular $\P(t)<+\infty$ as clearly $\P(0)<+\infty$.
\item[(b)] $\P(t)$ is convex and, hence, continuous.
\item[(c)] $\P(0)\ge \log 2>0$.
\item[(d)] $\P(2)\le 0$.
\end{itemize}
\elem

{\sl Proof.} Fix $z\in J(G)\sms \ov{G^{\ast }(\Crit(f)_+)}$. Since the family of all analytic 
inverse branches of all elements of $G$ is normal in some neighborhood of $z$ (see \cite[Lemma 4.5]{hiroki1}) 
and all
its limit functions are constant (see Theorem~\ref{t1h4}), 
$\lim_{n\to\infty}\max\{|f_{\om }'(x)|:|\om |=n, x\in f_{\om }^{-1}(z)\}=\infty $.
So, item (a) follows directly from (\ref{1h32}). Item (b), that is convexity of $\P(t)$
follows directly from (\ref{1h32}) and H\"older inequality. Item (c) follows from the
fact that $\max\{u,\max\{\deg(f_j):1\le j\le u\}\}\ge 2$. For the proof of item (d) let
$U\sbt\oc$ be the set coming from the nice open set condition. Fix $z\in J(G)\sms \ov{G^{\ast }(\Crit(f)_+)}$. 
Let $U_z=B\(z,{1\over 2}\dist(z,G^{\ast }(\Crit(f)_+))\)$. It follows from Koebe's Distortion Theorem
that
$$
|(g_*^{-1})'(z)|^{2}\le CK^2{l_2(g_*^{-1}(U_z\cap U))\over l_2(U_z\cap U)}
$$
for all $g\in G$ and all analytic inverse branches $g_*^{-1}$ of $g$ defined on 
$B\left(z, \dist (z, G^{\ast }(\Crit(f)_{+}))\right)$, 
where $C>0$ is a constant independent of $g.$  
Since, by the open set condition, all the sets $g_*^{-1}(U_z\cap U)$ are mutually
disjoint, we thus get
$$
\L_2^n\1(z)
\le CK^2{l_2(\bu g_*^{-1}(U_z\cap U))\over l_2(U_z\cap U)}
\le {CK^2l_{2}(U)\over l_2(U_z\cap U)}.
$$
Hence $\P(2)=\P_z(2)\le 0$ and we are done. \endpf

\

\ni We say that a measure $\^m_t$ on $J(\tf)$ is $e^{\P(t)}|\tf'|^t$-conformal provided that
$$
\^m_t(\tf(A))=\int_Ae^{\P(t)}|\tf'|^td\^m_t
$$
for all Borel sets $A\sbt J(\tf)$ such that $\tf|_A$ is injective. If $\P(t)=0$, the 
measure $\^m_t$ is simply referred to as $t$-conformal. Fix $z\in
J(G)\sms \ov{G^{\ast }(\Crit(f)_+)}$.  
Observe that the critical parameter for the series 
$$
S_s(z)=\sum_{n=1}^\infty e^{-sn}\L_t^n\1(z)
$$
is equal to the topological pressure $\P(t)$, i.e. $S_s(z)=+\infty$ if $s<P(t)$ and 
$S_s(z)<+\infty$ if $s>\P(t)$. For every $\sg$-finite Borel measure
$m$ on $J(\tf)$ let
$\L_t^{*n}m$ be given by the formula
$$
\L_t^{*n}m(A)=m(\L_t^n\1_A), \  A\sbt J(\tf),
$$
where $m(g):=\int gdm$.
Notice that if $(\tau,\xi)\in J(\tf)\setminus \bigcup _{n=1}^{\infty }\tf ^{n}(\Crit(\tf))$, 
then for all Borel sets $A\sbt J(\tf)$ we have
$$
\L_t^{*n}\d_{(\tau,\xi)}(A)
=\d_{(\tau,\xi)}(\L_t^n\1_A)
=\L_t^n\1_A(\tau,\xi)
=\sum_{|\om |=n}\sum_{x\in A\cap f_{\om }^{-1}(\xi)}|f_{\om }'(x)|^{-t}
\le \L_t^n\1(\xi)
<\infty.
$$
In particular, $L_t^{*n}\d_{(\tau,\xi)}(J(\tf))\le \L_t^n\1(\xi)<\infty$. Hence, if
$s>\P(t)$, then
\beq\label{1h37}
\^\nu_s=S_s^{-1}(\xi)\sum_{n=1}^\infty e^{-sn}\L_t^{*n}\d_{(\tau,\xi)}
\eeq
is a Borel probability measure on $J(\tf)$. Now, for every Borel set $A\sbt J(\tf)$ we have
$$
\L_t^{*n}\d_{(\tau,\xi)}(A)
=\d_{(\tau,\xi)}(\L_t^n\1_A)
=\L_t^n\1_A(\tau,\xi)
=\sum_{(\g,z)\in \tf^{-n}(\tau,\xi)\cap A}|(\tf^n)'(\g,z)|^{-t}.
$$
So, $\L_t^{*n}\d_{(\tau,\xi)}(\tf^{-n}(\tau,\xi))=1$. Hence, denoting
$$
\nu_s=\^\nu_s\circ p_2^{-1},
$$
we get the following.

\

\blem\label{l1h36}
We have $\^\nu_s\lt(\bu_{n=0}^\infty \tilde{f}^{-n}(\tau,\xi)\rt)=1$ and 
$\nu_s(G^{-1}(\xi))=1$.
\elem

\

\ni In what follows that we are in the divergence type, i.e. $S_{\P(t)}(\xi)
=+\infty$. For the convergence type situation the usual modifications 
involving slowly varying functions have to be done, the details can be found
in \cite{dusullivan}. The following lemma is proved by a direct straightforward 
calculations.

\

\blem\label{l1h37}
For every $s>\P(t)$ the following hold.
\begin{itemize}
\item[(a)] $\^\nu_s$ is a Borel probability measure.
\item[(b)] For every continuous function $g:J(\tf)\to\R$, we have
$$
\int gd\^\nu_s
=S_s^{-1}(\xi)\sum_{n=1}^\infty e^{-sn}\L_t^ngd\d_{(\tau,\xi)}
=S_s^{-1}(\xi)\sum_{n=1}^\infty e^{-sn}\L_t^ng(\tau,\xi).
$$
\item[(c)]
$$
e^{-s}\L_t^*\^\nu_s
=S_s^{-1}(\xi)\sum_{n=1}^\infty e^{-s(n+1)}\L_t^{*(n+1)}\d_{(\tau,\xi)}
=\^\nu_s-S_s^{-1}(\xi)(e^{-s}\L_t^*\d_{(\tau,\xi)}).
$$
\end{itemize}
\elem

\

\ni Now we can easily prove the following.

\

\bprop\label{p1h39}
For every $t\ge 0$ there exists an $e^{\P(t)}|\tf'|^t$-conformal measure $\^m_t$
for the map $\tf:J(\tf)\to J(\tf)$.
\eprop

{\sl Proof.} Since $\lim_{s\downto\P(t)}S_s(\xi)=+\infty$, it suffices to take
as $\^m_t$ any weak limit of $\^\nu_s$ when $s\downto\P(t)$, and to apply
Lemma~\ref{l1h37}(c). \endpf

\

\ni Consider now a Borel set $A\sbt J(\tf)$ such that $\tf|_A$ is injective. It then
follows from Lemma~\ref{l1h37}(c) that
\beq\label{1h39}
\aligned
\^\nu_s(A)
&= e^{-s}\L_t^*\^\nu_s(\1_A) + S_s^{-1}(\xi)e^{-s}\L_t^*\d_{(\tau,\xi)}(\1_A) \\
&= e^{-s}\int\L_t(\1|_A)d\^\nu_s + e^{-s}S_s^{-1}(\xi)\int\L_t(\1|_A)d\d_{(\tau,\xi)} \\
&= e^{-s}\int\sum_{y\in \tf^{-1}(x)}|\tf'(y)|^{-t}\1_A(y)d\^\nu_s(x)  
    + e^{-s}S_s^{-1}(\xi)\L_t(\1|_A)(\tau,\xi) \\
&= e^{-s}\int_{\tf(A)}|(\tf|_A^{-1})'(x)|^td\^\nu_s(x)
    + \begin{cases} 0 \                  &\text{ if }  \  A\cap \tf^{-1}(\tau,\xi)=\es \\
        e^{-s}S_s^{-1}(\xi)|\tf'(y)|^{-t} &\text{ if }  \  A\cap \tf^{-1}(\tau,\xi)=\{y\}.
      \end{cases}
\endaligned
\eeq
Suppose now that $(\om,x)\in J(\tf)$ and there exists a (unique) continuous inverse branch
 $\f_{(\om,x)}^{-1}:\Sg_u\times B(f_{\om_1}(x),2R)\to \Sg_u\times \C $ of $\tilde{f}$ sending 
$(\sg(\om),f_{\om_1}(x))$ to $(\om,x)$. It then follows from (\ref{1h39}) and Lemma~\ref{l1h37}(c) that for 
every set $A\sbt \Sg_u\times B(f_{\om_1}(x),2R)$, we have
that
\beq\label{3h39}
\aligned
\^\nu_s(\phi _{(\om ,x)}^{-1}(A))
&= e^{-s}\int_A|(\f_{(\om,x)}^{-1})'|^td\^\nu_s
     +
       e^{-s}S_s^{-1}(\xi)|(\f_{(\om,x)}^{-1})'(\tau,\xi)|^t\d_{(\tau,\xi)}(A)  
\endaligned
\eeq
From now on throughout the paper we assume that
\beq\label{2h39}
\P(t)\ge 0.
\eeq
We also require that 
\beq\label{1h41}
\xi\notin \overline{G^{\ast }(\Crit(f)_{+})}.
\eeq
Our goal now is to show that the measure 
$$
m_t=\^m_t\circ p_2^{-1}
$$ 
is uniformly upper $t$-estimable. For every critical point $c\in\Crit(f)$ let
$$
I(c)=\{1\le i\le u:f_i'(c)=0\}
$$
and let
$$
\Sg(c)=\bu_{i\in I(c)}[i]\sbt \Sg_u,
$$
where $[u]$ is defined by formula \eqref{referee2}.
Now suppose that $\Ga$ is a closed subset of $J(G)$ 
such that $g(\Gamma )\cap J(G)\subset \Gamma $ for each $g\in \Gamma $, 
and that $\^m$ is a Borel probability measure on $J(\tf)$. 

\

\bdfn\label{d1h40}
The measure $\^m$ is said to be nearly upper $t$-conformal respective to $\Ga$ 
provided that there exists an $S>0$ such that the following conditions are satisfied.
\begin{itemize}
\item[(a)] For every $z\in \Ga$
$$
\^m(\tf(A))\ge\int_A|\tf'|^td\^m
$$
for every Borel sets $A\sbt J(\tf)\cap p_{2}^{-1}(B(z,S))$ such that $\tf|_A$ is injective.
\item[(b)] For every $c\in\Crit(f)$ such that $\bigcup _{|\tau |=l}
  f_{\tau }(c_+)\cap J(G)\sbt\Ga$  
(the integer $l=l(f)\ge 0$ 
coming from Lemma~\ref{lu5.9h17}) and every $1\le j\le l+1$,
$$
\^m(\tf^j(A))\ge\int_A|(\tf^j)'|^td\^m
$$
for every Borel sets $A\sbt J(\tf)\cap p_{2}^{-1}(B(c,S))$ such that $\tf^j|_A$ is injective.
\item[(c)] $\^m(\Sg(c)\times\{c\})=0$ for every point $c\in\Ga\cap\Crit(f)$.
\end{itemize}
The constant $S$ is said to be the nearly upper conformality radius. 
If $\Ga=J(G)$, we simply say that $\^m$ is nearly upper $t$-conformal. In any case put
$$
m=\^m\circ p_2^{-1}.
$$
\edfn

\

\ni Let us prove the following.

\

\blem\label{l1h43}
Suppose that $\Ga$ is a closed subset of $J(G)$ such that $g(\Gamma )\cap J(G)\subset \Gamma $ for each $g\in G$, and that 
$\^m$ is a Borel probability 
nearly upper $t$-conformal measure on $J(\tf)$ respective to $\Ga$. Fix $i\in\{0,1,\ld,p\}$ and 
suppose that for every critical point $c\in S_i(f)\cap\Ga$ the measure 
$\^m|_{\Sg(c)\times\oc}\circ p_2^{-1}$ is upper $t$-estimable at $c$. Then the measure $m$
is uniformly upper $t$-estimable at all points $z\in J_i(G)\cap\Ga$.
\elem

{\sl Proof.} Since $\Ga$ is a closed set and $\Crit(f)$ is finite, the number 
$\De=\dist(\Ga,\Crit(f)\sms\Ga)$ is positive 
(if $\Crit(f)\setminus \Ga =\emptyset $ then we put $\De =\infty $). 
Fix $\th\in(0,\min\{1,\g\})$ so small that
\beq\label{1h42}
\th||\tilde{f}'||^{-1}<\min \{ \De, \rho \} .
\eeq
Put 
$$
\a=\th(KA_{f}^2)^{-1}2^{-\#\Crit(f)}.
$$
Let $z\in J_{i}(G)\cap \Ga .$ 
Fix $\tau\in\Sg_u$ such that $(\tau,z)\in J(\tf)$, i.e. $\tau\in p_1(J(\tf)\cap p_2^{-1}(z))$. 
Assume $r\in(0,R_{f}]$ to be sufficiently small.
Let $s(\tau,r)=s(\th,(\tau,z),8\a^{-1}r)\ge 0$ be the integer produced in Proposition~\ref{pu6.4h27}.
Set $R_{\tau|_{s(\tau ,r)+1}}=4r|f_{\tau|_{s(\tau ,r)}}'(z)|$. It then follows from Proposition~\ref{pu6.4h27}
that the family 
$$
\Fa(z,r)=\{\tau|_{s(\tau,r)+1}:\tau\in p_1(J(\tf)\cap p_2^{-1}(z))\}
$$ 
is $(4, \gamma , V)$-essential for the pair $(z,r)$, where $V=\bu\{[\tau|_{s+1}]:\tau\in 
p_1(J(\tf)\cap p_2^{-1}(z))\}$. Keep $\tau\in p_1(J(\tf)\cap p_2^{-1}(z))$ and $s=s(\tau,r)$.
Suppose that the first alternative of (b) in Proposition~\ref{pu6.4h27} holds. Then
$8\a^{-1}r|f_{\tau|_s}'(z)|>||\tilde{f}'||^{-1}$. So, using Koebe's Distortion Theorem, 
and assuming $\theta $ is small enough, we
get from nearly upper $t$-conformality of $\^m$ respective to $\Ga$ that
\beq\label{1h45}
\aligned
\^m\(f_{\tau|_s,z}^{-s}([\tau_{s+1}]\times B(f_{\tau|_s}(z),R_{\tau|_{s+1}}))\)
&\le \^m\(f_{\tau|_s,z}^{-s}(p_2^{-1}(B(f_{\tau|_s}(z),R_{\tau|_{s+1}})))\) \\
&\le K^t|f_{\tau|_s}'(z)|^{-t}\^mp_2^{-1}\(B(f_{\tau|_s}(z),R_{\tau|_{s+1}}))\)  \\
&\le K^t|f_{\tau|_s}'(z)|^{-t} \\
&\le (8K\a ^{-1}\| \tf '\| )^tr^t.
\endaligned
\eeq
Now suppose $8\alpha ^{-1}r| f_{\tau |_{s}}'(z)| \leq \| \tf '\| ^{-1}$ which particular implies 
that the second alternative of (b) in Proposition~\ref{pu6.4h27} holds. 
Let $c\in\Crit(f_{\tau_{s+1}})$ such that $f_{\tau |_{s+1}}(c)\in J(G)$ come from item (b)
of this proposition. Since $z\in J_i(G)$ (and $\theta \| \tf'\| ^{-1}<\rho $), it
follows from (\ref{1h17}) and Proposition~\ref{pu6.4h27} that $c\in S_i(f)$. Since
$8\a^{-1}r|f_{\tau|_s}'(z)|\le ||\tilde{f}'||^{-1}$, it follows from Proposition~\ref{pu6.4h27}(b)
and (\ref{1h42}) that $|f_{\tau|_s}(z)-c|\le \th||\tilde{f}'||^{-1}<\De$. Thus $c\in\Ga$. Hence,
making use of Proposition~\ref{pu6.4h27}(b), (c), as well as Koebe's Distortion Theorem,
nearly upper $t$-conformality of $\^m$, and our $t$-upper estimability assumption, 
and assuming $\theta $ is small enough, 
we get with some universal constant $C_1$ that
$$
\aligned
\^m\(f_{\tau|_s,z}^{-s}([\tau_{s+1}] &\times B(f_{\tau|_s}(z),R_{\tau|_{s+1}}))\) \\
&\le K^t|f_{\tau|_s}'(z)|^{-t}\^m\([\tau_{s+1}]\times B(f_{\tau|_s}(z),R_{\tau|_{s+1}})\) \\
&\le K^t|f_{\tau|_s}'(z)|^{-t}\^m|_{\Sg(c)\times\oc}\circ p_2^{-1}\(B(f_{\tau|_s}(z),R_{\tau|_{s+1}})\) \\
&\le K^t|f_{\tau|_s}'(z)|^{-t}\^m|_{\Sg(c)\times\oc}\circ p_2^{-1}
        \(B(c,R_{\tau|_{s+1}}+8\th\a^{-1}r|f_{\tau|_s}'(z)|)\) \\
&\le K^t|f_{\tau|_s}'(z)|^{-t}\^m|_{\Sg(c)\times\oc}\circ p_2^{-1}
        \(B(c,4(1+2\th\a^{-1})r|f_{\tau|_s}'(z)|)\) \\
&\le K^t|f_{\tau|_s}'(z)|^{-t}C_1\(4(1+2\th\a^{-1})r|f_{\tau|_s}'(z)|\)^t \\
&=   C_1(4K(1+2\th\a^{-1}))^tr^t.
\endaligned
$$
Combining this with (\ref{1h45}) and applying Proposition~\ref{p1h2a.1}, we get that
\begin{equation}
\label{eq:mbzrast1}
m(B(z,r))\le \#_{4,\gamma }C_{1}\max\{8K\a ^{-1}\|\tf'\| , 4K(1+2\th\a^{-1})\} ^{t}r^t.
\end{equation}
We are done. \endpf

\

\blem\label{l1h47}
There are two functions $(R,S)\mapsto R^{\ast }$ and $L\mapsto \hat{L}$ with the following 
property.
\begin{itemize}
\item 
Suppose that $\Ga$ is a closed subset of $J(G)$ such that 
$g(\Gamma )\cap J(G)\subset \Gamma $ for each $g\in G$, and that $\^m$ is a Borel probability 
nearly upper $t$-conformal measure on $J(\tf)$ respective to $\Ga$ 
with nearly upper conformality radius $S.$ Fix $i\in\{0,1,\ld,p\}$ 
and suppose that the measure $m$ is uniformly upper $t$-estimable at all points 
$z\in J_i(G)\cap\Ga$ with corresponding estimability constant $L$ and estimability radius $R.$ 
 Then the measure $\^m|_{\Sg(c)\times\oc}\circ p_2^{-1}$ is $t$-upper 
estimable, with upper estimability constant $\hat{L}$ 
and radius $R^{\ast }$ at every point 
$c\in Cr_{i+1}(f)$
such that $\bigcup _{|\om |=l}f_{\om }(c_+)\cap J(G)\sbt\Ga$.
\end{itemize}
\elem 

{\sl Proof.} Fix $c\in Cr_{i+1}(f)$ such that $\bigcup _{|\om |=l}f_{\om }(c_+)\sbt\Ga$ and also $j\in\{0,1,\ld,u\}$
such that $f_j'(c)=0$. Consider an arbitrary $\tau\in\Sg_u$ such that $\tau_1=j$ and 
$(\tau,c)\in J(\tf)$. In view of Lemma~\ref{lu5.9h17}
$$
f_{\tau|_{l+1}}(c)\in J_i(G)\cap\Ga.
$$
Let $R>0$ (sufficiently small) be the radius resulting from uniform $t$-upper estimability 
at all points of $J_i(G)\cap\Ga$. Let $D_{\tau|_{l+1}}(c)$ be the connected component of
$f_{\tau|_{l+1}}^{-1}(B(f_{\tau|_{l+1}}(c),R))$ containing $c$. Set 
$$
\nu_{\tau|_{l+1}}=\^m|_{[\tau|_{l+1}]\times\oc}\circ p_2^{-1}|_{D_{\tau|_{l+1}}(c)}.
$$
Applying nearly upper $t$-conformality of $\^m$ we get for every Borel set $A\sbt D_{\tau|_{l+1}}(c)
\sms \{c\}$ such that $f_{\tau|_{l+1}}|_A$ is injective, the following.
$$
m(f_{\tau|_{l+1}}(A))
=\^m(\Sg_u\times f_{\tau|_{l+1}}(A)))
=\^m(\tilde{f}^{l+1}([\tau|_{l+1}]\times A))
\ge \int_A|f_{\tau|_{l+1}}'(x)|^td\nu_{\tau|_{l+1}}(x).
$$
It therefore follows from Lemma~\ref{lku4.10} and item (c) of Definition~\ref{d1h40} that
the measure $\nu_{\tau|_{l+1}}$ is upper $t$-estimable at $c$ with upper estimability 
constant $L_{0}$ and radius $R_{0}$ independent of $\^m$ (but possibly $R_{0}$ depends on $(R,S)$ and 
$L_{0}$ depends on $L$). Let
$$
\Fa=\{\tau|_{l+1}:(\tau,c)\in J(\tf) \  \text{ and } \ f_{\tau_1}'(c)=0\}.
$$
Let $D_c=\bi_{\om\in\Fa}D_\om(c)$. Since $\#\Fa\le u^{l+1}$ and since 
$$
\^m|_{\Sg(c)\times\oc}\circ p_2^{-1}|_{D_{c}}=\sum_{\om\in\Fa}\nu_\om |_{D_{c}},
$$
we conclude that the measure $\^m|_{\Sg(c)\times\oc}\circ p_2^{-1}$ is $t$-upper 
estimable at the point $c$ with upper estimability constant $\hat{L}$ 
and radius $R^{\ast }$ independent of $\^m$.
We are done. \endpf

\

\ni Now, a straightforward inductive reasoning based on Lemma~\ref{l1h43} and (\ref{eq:mbzrast1}), (which also
give the base of induction since $S_0(f)=\es$), and Lemma~\ref{l1h47} yields the
following.

\

\blem\label{l1h49}
Suppose that $\Ga$ is a closed subset of $J(G)$ such that 
$g(\Gamma )\cap J(G)\subset \Gamma $ for each $g\in G$, 
and that $\^m$ is a Borel probability 
nearly upper $t$-conformal measure on $J(\tf)$ respective to $\Ga$ with nearly upper conformality radius $S.$ 
Then the measure 
$m=\^m\circ p_2^{-1}$ is uniformly upper $t$-estimable at every point of $\Ga$ and 
$\^m|_{\Sg(c)\times{\oc}}\circ p_2^{-1}$ is upper $t$-estimable, with upper estimability 
constants and radii independent of the measure $\^m$ (but possibly dependent on $S$), 
at every point $c\in\Ga\cap\Crit(f)$.
\elem

\ni Now we are in the position to prove the following.

\blem\label{l1h51}
If $\P(t)\ge 0$, then the measure $m_t=\^m_t\circ p_2^{-1}$ is uniformly upper $t$-estimable.
\elem

{\sl Proof.} Fix $s>\P(t)\ge 0$ and consider the measure $\^\nu_s$ defined in (\ref{1h37}).
We want to apply Lemma~\ref{l1h49} with $\Ga=\ov{G^{\ast }(\Crit(f)_{+}\cap J(G))}\cap J(G)$ and $\^m=\^\nu_s$.
For this we have to check that $\^\nu_s$ is nearly upper $t$-conformal respective to $\Ga$. Condition
(c) of Definition~\ref{d1h40} follows directly from Lemma~\ref{l1h36} and the fact that
$\xi\notin G(\Crit(f))$ (see (\ref{1h41})). Since $\xi\notin\Ga$ and $G(\Ga)\cap J(G)\subset \Ga$, there 
exists an $S_{0}>0$ such that $\xi\notin\bu_{j=1}^uf_j(B(\Ga,S_{0}))\cap J(G)$. Formula (\ref{3h39}) then
yields that for every $z\in \Gamma $, 
$$
\^\nu_s(\tf(A))
=e^s\int_A|\tf'|^td\^\nu_s
\ge \int_A|\tf'|^td\^\nu_s
$$
for every Borel set $A\sbt\Sg_u\times B(z,S_{0})$ such that $\tilde{f}|_A$ is injective. Thus,
condition (a) of Definition~\ref{d1h40} is also verified. Condition (b) of this definition
follows by iterating the above argument $l+1$ times and keeping in mind that 
$\xi\notin \overline{G^{\ast }(\Crit(f)_{+})}$. 
Hence, there exists a constant $S$ such that for each $s>P(t)$, $\tilde{\nu }_{s}$ is 
nearly upper $t$-conformal respective to $\Gamma $ with nearly upper conformality radius $S.$ 
Therefore, Lemma~\ref{l1h49} applies and we conclude that all measures
$\^\nu_s|_{\Sg(c)\times\oc}\circ p_2^{-1}$ are upper $t$-estimable at respective points
$c\in\Crit(f)\cap J(G)$ with estimability constants and radii independent of $s>\P(t)$.
Therefore, $\^m_t$, a weak limit of measures $\^\nu_s$, $s>\P(t)$, (see the proof of 
Proposition~\ref{p1h39})) also enjoys the property that $\^m_t|_{\Sg(c)\times\oc}\circ p_2^{-1}$
is  upper $t$-estimable at respective points $c\in\Crit(f)\cap J(G)$. Consequently
$\^m_t(\Sg(c)\times\{c\})=0$. Having this we immediately see from Proposition~\ref{p1h39}
that the measure $\^m_t$ is nearly upper $t$-conformal, i.e. respective to $\Ga=J(G)$. 
So, applying Lemma~\ref{l1h49}, we conclude that the measure $m_t=\^m_t\circ p_2^{-1}$ is 
uniformly upper $t$-estimable at every point of $\Ga=J(G)$. We are done. \endpf

\

\ni  
Now we assume that $t=h$, i.e. $\P(t)=0$ and we deal with the problem of lower 
estimability. It is easier than the upper one. We start with the following.

\

\blem\label{l1h53}
Fix $i\in\{0,1,\ld,p\}$ and suppose that for every critical point $c\in S_i(f)$
and every $j\in I(c)$ the measure $\^m_h|_{[j]\times\oc}\circ p_2^{-1}$ is strongly
lower $h$-estimable at $c$ with sufficiently small lower estimability size. Then $m_h$ is
uniformly strongly lower $h$-estimable at all points of $J_i(G)$. 
\elem

{\sl Proof.} 
Let $\theta \in (0,\min\{ 1,\gamma \} )$ be such that $\theta \| \tilde{f}'\| ^{-1}<\rho .$ 
Put $\alpha := \theta ^{-1}KA_{f}^{2}2^{\# \text{Crit}(f)+5}.$ Let 
$$
\lambda =\max\{\lambda (c):c\in S_i(f)\},
$$
where all $\lambda (c)$ are lower estimability sizes at respective critical points $c$. Fix
$z\in J_{i}(G)\setminus S_{i}(f)$ and take $\tau\in\Sg_u$ such that
$(\tau,z)\in J(\tf)$. Assume $r>0$ to be  
sufficiently small. Let $s=s(\th,(\tau,z),\alpha r)\ge 0$ be the integer produced in
Proposition~\ref{pu6.4h27} for the point $z$ and radius $r$. A straightforward calculation
based on Proposition~\ref{p1h39} shows that
$$
\nu_1=\^m_h|_{[\tau|_s]\times f_{(\tau|_s,z)}^{-1}(B(f_{\tau|_s}(z),32r|f_{\tau|_s}'(z)|))}
           \circ p_2^{-1} 
\  \text{ and }  \
\nu_2=m_h|_{B(f_{\tau|_s}(z),32r|f_{\tau|_s}'(z)|)}
$$
form an $h$-conformal pair of measures with respect to the map 
$$
f_{\tau|_s}:f_{\tau|_s,z}^{-1}(B(f_{\tau|_s}(z),32r|f_{\tau|_s}'(z)|))\to 
B(f_{\tau|_s}(z),32r|f_{\tau|_s}'(z)|).
$$
By Koebe's ${1\over 4}$-Theorem (Theorem~\ref{kdt1/4})
for every $x\in B(z,r)$ we have 
\beq\label{1h52}
B(x,r)\sbt f_{\tau|_s,z}^{-1}\(B(f_{\tau|_s}(z),8r|f_{\tau|_s}'(z)|)\).
\eeq
So,
$$
B(f_{\tau|_s}(x),r|f_{\tau|_s}'(z)|)\sbt B(f_{\tau|_s}(z),9r|f_{\tau|_s}'(z)|).
$$
By Koebe's Distortion Theorem we also get (with small enough $\lambda $) 
$$
B(x,\lambda  r)\spt f_{\tau|_s,z}^{-1}\(B\(f_{\tau|_s}(x),K^{-1}\lambda  r|f_{\tau|_s}'(z)|\)\).
$$
In virtue of Koebe's Distortion Theorem and $t$-conformality of the pair $(\nu_1,\nu_2)$,
we get as a consequence of all of this that
$$
\aligned
m_h(B(x,\lambda  r))
&\ge \nu_1(B(x,\lambda  r))
 \ge \nu_1\(f_{\tau|_s,z}^{-1}\(B\(f_{\tau|_s}(x),K^{-1}\lambda  r|f_{\tau|_s}'(z)|\)\)\) \\
&=   \int_{B\(f_{\tau|_s}(x),K^{-1}\lambda  r|f_{\tau|_s}'(z)|\)}|(f_{\tau|_s,z}^{-1})'|^hd\nu_2 \\
&\ge K^{-h}|f_{\tau|_s}'(z)|^{-h}\nu_2\(B\(f_{\tau|_s}(x),K^{-1}\lambda  r|f_{\tau|_s}'(z)|\)\).
\endaligned
$$
Suppose now that the first alternative in Proposition~\ref{pu6.4h27}(b) holds. We then can continue
the above estimate as follows.
\beq\label{1h55}
\aligned
m_h(B(x,\lambda  r))
&\ge K^{-h}|f_{\tau|_s}'(z)|^{-h}\nu_2\(B\(f_{\tau|_s}(x),K^{-1}\lambda ||\tilde{f}'||^{-1}\)\) \\
&=   K^{-h}|f_{\tau|_s}'(z)|^{-h}  m_h\(B\(f_{\tau|_s}(x),K^{-1}\lambda ||\tilde{f}'||^{-1}\)\)
\endaligned
\eeq
By conformality the measure $\^m_h$ is positive on open subsets of $J(\tf)$, and so, the measure 
$m_h$ is positive on open subsets of $J(G)$. Therefore, for every $R>0$,
$$
M_R=\inf\{m_h(B(w,R):w\in J(G)\}>0.
$$
Hence, (\ref{1h55}) gives that
$$
m_h(B(x,\lambda  r))\ge K^{-h}M_{K^{-1}\lambda ||\tilde{f}'||^{-1}}|f_{\tau|_s}'(z)|^{-h}. 
$$
By minimality of $s=s(\th,(\tau,z),\alpha r)$ we have $\alpha r|f_{\tau|_{s-1}}'(z)|\le ||\tilde{f}'||^{-1}$ 
($s\ge 1$ assuming $r>0$ to be sufficiently small). Hence $|f_{\tau|_s}'(z)|\le (\alpha r)^{-1}$, 
and therefore
$$
m_h(B(x,\lambda  r))\ge K^{-h}M_{K^{-1}\lambda ||\tilde{f}'||^{-1}}\alpha ^{h}r^h.
$$
So suppose that $\alpha r\| (\tilde{f}^{s})'(\tau ,z)\| \leq \| \tilde{f}'\| ^{-1}$ 
and the second alternative in Proposition~\ref{pu6.4h27}(b) holds. 
Let $c\in \text{Crit}(f_{\tau _{s+1}})$ be such that $f_{\tau _{s+1}}(c)\in J(G)$ and 
$|f_{\tau |_{s}}(z)-c|\leq \theta \alpha r|f_{\tau |_{s}}'(z)|\leq \theta \| \tilde{f}'\| ^{-1}<\rho .$ 
Since $z\in J_{i}(G)$, we obtain $c\in S_{i}(f).$ 
Then, 
using (\ref{1h52}), we get
$$
f_{\tau |_{s}}(x)\in   B(f_{\tau|_s}(z),8r|f_{\tau|_s}'(z)|)
    \sbt  B(c,(\th \alpha +8)r|f_{\tau|_s}'(z)|)
$$
and
$$
     B\(f_{\tau|_s}(x),\lambda (\th \alpha +8)r|f_{\tau|_s}'(z)|\)
\sbt B\(f_{\tau|_s}(z),(8+\lambda (\th \alpha +8))r|f_{\tau|_s}'(z)|\)
\sbt B\(f_{\tau|_s}(z),9r|f_{\tau|_s}'(z)|\)
$$
if $\lambda >0$ is small enough. Hence, using conformality of the pair $(\nu_1,\nu_2)$,
Koebe's Distortion Theorem, the fact that $\tau_{s+1}\in I(c)$, and the lower 
$h$-estimability $\^m_h|_{[\tau_{s+1}]\times\oc}\circ p_2^{-1}$ at the point $c$, we 
get that
$$
\aligned
m_h(B(x,K\lambda (\th \alpha +8)r)
&\ge \nu_1(B(x,K\lambda (\th \alpha +8)r)
 \ge \nu_1\(f_{(\tau|_s,z)}^{-1}\(B\(f_{\tau|_s}(x),\lambda (\th \alpha +8)r|f_{\tau|_s}'(z)|\)\)\) \\
&\ge K^{-h}|f_{\tau|_s}'(z)|^{-h}\nu_2\(B\(f_{\tau|_s}(x),\lambda (\th \alpha +8)r|f_{\tau|_s}'(z)|\)\) \\
&\ge K^{-h}|f_{\tau|_s}'(z)|^{-h}\^m_h|_{[\tau_{s+1}]\times\oc}\circ p_2^{-1}
           \(B\(f_{\tau|_s}(x),\lambda (\th \alpha +8)r|f_{\tau|_s}'(z)|\)\) \\
&\ge K^{-h}|f_{\tau|_s}'(z)|^{-h}L_{0}((\th \alpha +8)r|f_{\tau|_s}'(z)|)^h \\
&=   L_{0}((\th \alpha +8)K^{-1})^hr^h, 
\endaligned
$$
where $L_{0}$ is a constant independent of $x$ and $r.$ 
So, we are done with the lower estimability size $K\lambda (\th \alpha +8)$. \endpf

\

\ni Now we shall prove the following.

\blem\label{l1h57}
Fix $i\in\{0,1,\ld,p\}$ and suppose that the measure $m_h$ is uniformly strongly lower $h$-estimable 
at all points of $J_i(G)$. Then the measure $\^m_h|_{[j]\times\oc}\circ p_2^{-1}$ is strongly
lower $h$-estimable at every critical point $c\in Cr_{i+1}(f)$ and every $j\in I(c)$.
\elem

{\sl Proof.} Fix $c\in Cr_{i+1}(f)$ and then an arbitrary $j\in I(c)$. Next consider an arbitrary
$\tau\in\Sg_u$ such that $\tau_1=j$ and $(\tau,c)\in J(\tf)$. Now, ignoring $\Ga$, follow 
the proof of Lemma~\ref{l1h47} up to the definition of the measure $\nu_{\tau|_{l+1}}$. It 
follows from conformality of $\tilde{m}_h$ that the measure $\nu_{\tau|_{l+1}}$ on $D_{\tau|_{l+1}}(c)$
and $\^m_h|_{\Sg_u\times B(f_{\tau|_{l+1}}(c),R)}\circ p_2^{-1}=m_h|_{B(f_{\tau|_{l+1}}(c),R)}$
form an $h$-conformal pair of measures for the map $f_{\tau|_{l+1}}:D_{\tau|_{l+1}}(c)\to
B(f_{\tau|_{l+1}}(c),R)$. So the measure $\nu_{\tau|_{l+1}}$ is strongly lower $h$-estimable at 
$c$ in virtue of our assumption and Lemma~\ref{lku4.11}. Since $D_{\tau|_{l+1}}(c)$ is an
open neighborhood of $c$ and $[\tau|_{l+1}]\times D_{\tau|_{l+1}}(c)\sbt [j]\times\oc$, we thus
see that the measure $\^m_h|_{[j]\times\oc}\circ p_2^{-1}$ is strongly lower $h$-estimable at $c$.
We are done. \endpf

\ 

\ni The second main result of this section is this.

\blem\label{l1h59}
The measure $m_h=\^m_h\circ p_2^{-1}$ is uniformly strongly lower $h$-estimable.
\elem

{\sl Proof.} Having $J_p(G)=J(G)$ (Lemma~\ref{lu5.8h17}) the proof of this lemma is the 
obvious mathematical induction based on
Lemma~\ref{l1h53} and Lemma~\ref{l1h57} as inductive steps and Lemma~\ref{l1h53} with
$i=0$ (then $S_i(G)=\es$ and its hypothesis are vacuously fulfilled) serving as the base 
of induction. \endpf

\

\ni Recall that two measures are said to be equivalent if they are
absolutely continuous one with the other. Since every uniformly
strongly lower $h$-estimable measure is uniformly lower $h$-estimable, 
as an immediate consequence of Lemma~\ref{l1h51}, Lemma~\ref{l1h59},
and \cite{Fal, mattila, PUbook}, we obtain the 
following main result of this section and one of the two main results of the entire paper.

\bthm\label{t1h61}
Under Assumption {\em (}$\ast ${\em )} (formulated just after
Theorem~\ref{t1h4}), we have the following.  
\begin{itemize}
\item[(a)] The measure $m_h=\^m_h\circ p_2^{-1}$ is geometric meaning
  that there exists a constant 
$C\ge 1$ such that 
$$
C^{-1}\le {m_h(B(z,r))\over r^h}\le C
$$
for all $z\in J(G)$ and all $r\in(0,1]$.
\end{itemize}
Consequently,
\begin{itemize}
\item[(b)] $h=\HD(J(G))=\PD(J(G))=\BD(J(G))$.
\item[(c)] $\HD(J(G))$ is the unique zero of $t\mapsto P(t).$ 
\item[(d)] All the measures $\H^h$, $\P^h$, and $m_h$ are 
  equivalent one with each other with Radon-Nikodym 
derivatives uniformly separated away from zero and infinity.
\end{itemize}
In particular
\begin{itemize}
\item[(e)] $0<\H^h(J(G))),\P^h(J(G)))<+\infty$.
\end{itemize}
\ethm

\bdfn
The unique zero of $t\mapsto P(t)$ is denoted by $h=h(f).$ 
Note that $h(f)=\HD(J(G))=\PD(J(G))=\BD(J(G)).$ 
\edfn

\bcor
\label{t1h61cor1}
Under Assumption \mbox{(}$\ast $\mbox{)}, 
for each $z\in J(G)\setminus \ov{G^{\ast }(\Crit(f)_{+})}, $ 
we have $h(f)=T_{f}(z)=t_{0}(f)=S_{G}(z)=s_{0}(G)=\HD(J(G))=\PD(J(G))=\BD(J(G)).$ 
\ecor
{\sl Proof.}
Let $z\in J(G)\setminus \ov{G^{\ast }(\Crit(f)_{+})}. $ 
Since $G$ satisfies the open set condition, $G$ is a free semigroup. 
Hence 
$T_{f}(z)=S_{G}(z)$ and $t_{0}(f)=s_{0}(G).$ Moreover, 
by \cite[Theorem 5.7]{hiroki2}, 
we have $\HD(J(G))\leq s_{0}(G)\leq S_{G}(z).$ 
We now let $a>h(f).$ Since 
$h(f)$ is the unique zero of $P(t)$ and since $t\mapsto P(t)$ is 
non-increasing function, we have $P(a)<0.$ 
Hence there exists a number $v<0$ such that 
for each $n\in \N $, $\sum _{|\om |=n}\sum _{x\in f_{\om }^{-1}(z)}|f_{\om }'(x)|^{-a}\leq e^{nv}.$ 
Therefore $T_{f}(z)\leq a.$ Thus $T_{f}(z)\leq h(f).$ Since $h(f)=\HD(J(G))$, 
it follows that $h(f)=T_{f}(z)=t_{0}(f)=S_{G}(z)=s_{0}(G)=\HD(J(G))=\PD(J(G))=\BD(J(G)).$ 
We are done. 
\endpf

\ 

\ni It follows from Theorem~\ref{t1h61} that the measure $m_h$ is atomless. We thus
get the following.

\

\bcor\label{c3h61} 
Under Assumption {\em (}$\ast ${\em )}, 
we have $\^m_h(\Sing(\tf))=0$.
\ecor

{\sl Proof.} Indeed, the set $\Crit(f)$ is finite and so, $G^{-1}(\Crit(f))$ is 
countable. For all $n\ge 0$ we have
$$
\tf^{-n}(\Crit(\tf))
\sbt p_2^{-1}(p_2(\tf^{-n}(\Crit(\tf))))
\sbt p_2^{-1}(G^{-1}(\Crit(f))). 
$$
Hence, $\^m_h(\tf^{-n}(\Crit(\tf)))\le m_h(G^{-1}(\Crit(f)))=0$. Since
$\Sing(\tf)=\bu_{n=0}^\infty 
\tf^{-n}(\Crit(\tf))$, we are thus done. \endpf

\

\section{Invariant Measures}
\label{s:Inv}

\ni In this section we prove that there exists a unique Borel
probability $\tilde{f}$-invariant  
measure on $J(\tf)$ which is absolutely continuous with respect to
$\^m_h$. This measure is proved to be metrically exact, in particular ergodic.

Frequently in order to denote that a Borel measure $\mu$ is
absolutely  continuous\index{absolutely  continuous measure} with
respect to $\nu$ we  write $\mu \abs \nu \index{\abs@$\abs$}$. We do
not use any special symbol however to record equivalence\index{equivalent
measures} of measures. We use some notations from \cite{Aa}. 
Let $(X,\mathcal{F}, \mu)$ be a $\sg$-finite measure space and let  
$T:X\rightarrow X$ be a
measurable almost everywhere defined transformation. 
$T$ is said to be nonsingular if and only if for any $A\in \mathcal{F}$,  
$\mu (T^{-1}(A))\Leftrightarrow \mu (A)=0.$ 
$T$ is said to be ergodic\index{ergodic measure} with respect to $\mu$, or
$\mu$ is said to be ergodic with respect to $T$, if and only if
$\mu(A)=0$ or $\mu(X \sms A)=0$  whenever the measurable set $A$ is
$T$-invariant, meaning that  $T^{-1}(A)=A$. 
For a nonsingular transformation $T:X\rightarrow X$, 
the measure $\mu$  is
said to  be conservative\index{conservative measure} with respect to
$T$  or $T$ conservative with respect to $\mu$ if and only if for
every measurable set $A$ with $\mu(A)>0$,
$$\mu(\{z\in A: \sum_{n=0}^\infty  1_A\circ T^n(z)< +\infty\})=0.
$$
Note that by \cite[Proposition 1.2.2]{Aa}, 
for a nonsingular transformation $T:X\rightarrow X$, 
$\mu $ is ergodic and conservative with respect to $T$ if and only if 
for any $A\in \mathcal{F}$ with $\mu (A)>0,$  
$$
\mu (\{ z\in X\mid \sum _{n=0}^{\infty }1_{A}\circ T^{n}(z)<+\infty \} )=0.
$$ 
Finally, the measure $\mu$ is said to be
$T$-invariant\index{invariant measure}, or $T$ is said to preserve
the measure $\mu$ if and only if $\mu\circ T^{-1}=\mu$. It follows
from Birkhoff's Ergodic Theorem that every finite ergodic
$T$-invariant measure $\mu$ is conservative,  for infinite measures
this is no longer true. Finally, two  ergodic invariant measures
defined on the same $\sigma$-algebra are  either singular or they
coincide up  to a multiplicative constant.

\

\bdfn\label{d1h65}
Suppose that $(X,\Fa,\nu)$ is a probability space and $T:X\to X$ is a measurable map
such that $T(A)\in \Fa$ whenever $A\in\Fa$. The map $T:X\to X$ is said to be weakly
metrically exact provided that $\limsup_{n\to\infty}\mu(T^n(A))=1$ whenever $A\in\Fa$ and 
$\mu(A)>0$.
\edfn

\

\ni We need the following two facts about weak metrical exactness, 
the first being straightforward (see the argument in \cite[page 15]{Aa}),
the latter more involved (see \cite{PUbook}).

\

\bfact\label{f2h65}
If a nonsingular measurable transformation $T:X\to X$ of a probability space $(X,\cF,\nu)$ is
weakly metrically exact, then it is ergodic and conservative.
\efact

\

\bfact\label{f3h65}
A measure-preserving transformation $T:X\to X$ of a probability space $(X,\Fa,\mu)$ is
weakly metrically exact if and only if it is exact, which means that $\lim_{n\to\infty}
\mu(T^n(A))=1$ whenever $A\in\cF$ and $\mu(A)>0$, or equivalently, the $\sg$-algebra
$\bi_{n\ge 0}T^{-n}(\cF)$ consists of sets of measure $0$ and $1$ only. 
Note that if $T:X\rightarrow X$ is exact, then the
Rokhlin's natural extension $(\^T,\^X,\^\mu)$ of $(T,X,\mu)$ is K-mixing.
\efact

\

\ni The precise formulation of our main result in this section is the following.

\

\bthm\label{t4h65}
$\^m_h$ is a unique $h$-conformal measure for the map $\tf:J(\tf)\to J(\tf)$. There exists 
a unique Borel probability $\tf$-invariant measure $\^\mu_h$ on $J(\tf)$ which is absolutely 
continuous with respect to $\^m_h$. The measure $\^\mu_h$ is metrically exact and 
equivalent with $\^m_h$. 
\ethm

\

\ni The proof of this theorem will consist of several steps. We start with the 
following.

\

\blem\label{l1h64}
Every $h$-conformal measure $\nu$ for $\tf:J(\tf)\to J(\tf)$ is equivalent to $\^m_h$.
\elem

{\sl Proof.} Fix an integer $v\ge 1$ and let 
$$
I_v=\{(\tau,z)\in J(\tf)\sms \Sing(\tf):\eta(\tau,z)\ge 1/v\},
$$
where $\eta(\tau,z)>0$ is the number produced in Proposition~\ref{pu6.1h23}. 
We may assume that $\eta (\tau ,z)\leq 1.$ Let
also $(\hat\tau,\hat z)$ and $(n_j)_1^\infty$ be the objects produced in this
proposition. Fix $(\tau,z)\in I_v$. Disregarding finitely many values
of $j$, we may assume without loss of generality that
$$
|f_{\tau|_{n_j}}(z)-\hat z|<{1\over 4}\eta(\tau,z).
$$
Let
\beq\label{2h67}
\aligned
&B_j(\tau,z)=
 [\tau|_{n_j}]\times f_{\tau|_{n_j},z}^{-1}\(B\(f_{\tau|_{n_j}}(z),
 {1\over 2}\eta(\tau,z)\)\) \\ 
&\text{and} \\
&r_j(\tau,z)={1\over 2}K\eta(\tau,z)|f_{\tau|_{n_j}}'(z)|^{-1}.
\endaligned
\eeq
By Koebe's Distortion Theorem and Proposition~\ref{pu6.1h23} we get that,
\beq\label{1h67}
\aligned
\nu (B_j(\tau,z))
&=   \nu \(\tf_{\tau |_{n_{j}},z}^{-n_j}\(\Sg_u\times B\(f_{\tau|_{n_j}}(z), {1\over 2}\eta(\tau,z)\)\)\) \\
&\ge K^{-h}|f_{\tau|_{n_j}}'(z)|^{-h}\nu\(\Sg_u\times B\(f_{\tau|_{n_j}}(z), {1\over 2}\eta(\tau,z)\)\) \\
&=   K^{-h}|f_{\tau|_{n_j}}'(z)|^{-h}\nu\circ p_2^{-1}\(B\(f_{\tau|_{n_j}}(z), {1\over 2}\eta(\tau,z)\)\) \\
&\ge M_{(2v)^{-1},\nu }K^{-h}|f_{\tau|_{n_j}}'(z)|^{-h} \\
&\ge M_{(2v)^{-1},\nu}(2K^{-1}\eta^{-1}(\tau,z))^hr_j^h(\tau,z) \\
&\ge 2^hM_{(2v)^{-1},\nu}K^{-h}r_j^h(\tau,z), 
\endaligned
\eeq
where $M_{R,v}:= \inf\{ \nu \circ p_{2}^{-1}(B(w,R))\mid w\in J(G)\} >0.$ 
Now fix $E$, an arbitrary Borel set contained in $I_v$. Fix also $\varepsilon>0$. Since the 
measure $\nu$ is regular, by Theorem~\ref{t1h4} there exists $j(\tau,z)\ge 1$ such that, with $B(\tau,z)
=B_{j(\tau,z)}(\tau,z)$ and $r(\tau,z)=r_{j(\tau,z)}(\tau,z)$, we have
\beq\label{1h69}
\nu\lt(\bu_{(\tau,z)\in E}B(\tau,z)\rt)\le \nu(E)+\e.
\eeq
By the $4r$-Covering Theorem (\cite{MS}), there exists a countable set $\hat E\sbt E$ such that
the balls $\{B(z,r(\tau,z)):(\tau,z)\in\hat E\}$ are mutually disjoint and
$$
\bu_{(\tau,z)\in\hat E}B(z,4r(\tau,z))\spt \bu_{(\tau,z)\in E}B(z,r(\tau,z))\spt p_2(E).
$$
Hence, by Theorem~\ref{t1h61} and (\ref{1h67}), we get
\beq\label{2h69}
\aligned
\^m_h(E)
&\le \^m_h(p_2^{-1}(p_2(E)))
 \le \sum_{(\tau,z)\in\hat E}\^m_h\circ p_2^{-1}\(B(z,4r(\tau,z))\) \\
&=        \sum_{(\tau,z)\in\hat E}m_h\(B(z,4r(\tau,z))\) \\
&\le C4^h \sum_{(\tau,z)\in\hat E}r^h(\tau,z) \\
&\le C(2K)^hM_{(2v)^{-1},\nu}^{-1}\sum_{(\tau,z)\in\hat E}\nu(B(\tau,z)).
\endaligned
\eeq
Now, since the sets $\{B(z,r(\tau,z)):(\tau,z)\in\hat E\}$ are mutually disjoint and
since $$B(\tau,z)\sbt p_2^{-1}(B(z,r(\tau,z))),$$ so are disjoint the sets
$\{B(\tau,z):(\tau,z)\in\hat E\}$. Thus, using (\ref{1h69}), we get
\beq\label{3h69}
\^m_h(E)
\le C(2K)^hM_{(2v)^{-1},\nu}^{-1}\nu\lt(\bu_{(\tau,z)\in\hat E}B(\tau,z)\rt)
\le C(2K)^hM_{(2v)^{-1},\nu}^{-1}(\nu(E)+\e). 
\eeq
Letting $\e\downto 0$ we thus get
$$
\^m_h(E)\le C(2K)^hM_{(2v)^{-1},\nu}^{-1}\nu(E).
$$
Consequently $\^m_h|_{I_v}\abs \nu|_{I_v}$. Since, in virtue of Proposition~\ref{pu6.1h23},
$J(\tf)\sms\Sing(\tf)=\bu_{v=1}^{\infty }I_v$, we get that
\beq\label{1absh69}
\^m_h|_{J(\tf)\sms\Sing(\tf)} \abs  \nu|_{J(\tf)\sms\Sing(\tf)}.
\eeq
Now, suppose that $\nu(\Sing(\tf))>0$. Since $\tf'$ vanishes on $\Crit(\tf)$, the measure 
$$\nu_0=(\nu(\Sing(\tf)))^{-1}\nu|_{\Sing(\tf)},$$ is $h$-conformal for $\tf:J(\tf)\to J(\tf)$. But
then (\ref{1absh69}) would be true with $\nu$ replaced by $\nu_0$. We would thus have 
$\^m_h(J(\tf)\sms\Sing(\tf))=0$. Since, by Corollary~\ref{c3h61}, $\^m_h(\Sing(\tf))=0$,
we would get $\^m_h(J(\tf))=0$. This contradiction shows that $\nu(\Sing(\tf))=0$. 
Consequently,
\beq\label{2absh69}
\^m_h\abs \nu.
\eeq
Seeking contradiction, suppose that $\nu$ is not absolutely continuous with respect 
to $\^m_h$. Then, there exists a Borel set $X\sbt J(\tf)\sms\cup_{n=0}^\infty \tf^n(\Sing(\tf))$ 
such that $\^m_h(X)=0$ but $\nu(X)>0$. But then the measure $\nu$ restricted to the forward 
and backward invariant set $\bu_{n,m\in\N}\tf^{-m}(\tf^n(X))$ and multiplied by the reciprocal of 
$\nu\(\bu_{n,m\in\N}\tf^{-m}(\tf^n(X))\)$ is $h$-conformal for $\tf:J(\tf)\to J(\tf)$. But, by conformality of
$\^m_h$, and as $X\sbt J(\tf)\sms\cup_{n=0}^\infty \tf^n(\Sing(\tf))$), we conclude from
$\^m_h(X)=0$ that $\^m_h\(\bu_{n,m\in\N}\tf^{-m}(\tf^n(X))\)=0$. Since, by (\ref{2absh69}), the
measure $\^m_h$ is absolutely continuous with respect to $\nu $ restricted to 
$\bu_{n,m\in\N}\tf^{-m}(\tf^n(X))$, we finally get that $\^m_h(J(\tf))=0$. This contradiction
show that $\nu\abs \^m_h$. Together with (\ref{2absh69}) this gives that $\nu$ and 
$\^m_h$ are equivalent. We are done. \endpf

\

\fr Combining inequalities (\ref{2h69}) and (\ref{3h69}) (with $\nu=\^m_h$) from 
the proof of Lemma~\ref{l1h64}, and letting $\e\downto 0$ in (\ref{3h69}), we get 
for every Borel set $E\sbt I_v$, $v\ge 1$, such that $p_{2} (E)$ is measurable, that
$$
m_h(p_2(E))\le C(2K)^hM_{(2v)^{-1}}^{-1}\^m_h(E).
$$
Consequently, as $J(\tf)\sms\Sing(\tf)=\bu_{v=1}^{\infty }I_v$ and
$\^m_h(\Sing(\tf))=0$, we get the following.

\

\blem\label{l1h68}
If $E$ is a Borel subset of $J(\tf)$ such that $p_{2}(E)$ is
measurable and $\^m_h(E)=0$, then $m_h(p_2(E))=0$. So, by  
Lemma~\ref{l1h64}, for any $h$-conformal measure $\nu$ for $\tf:J(\tf)\to J(\tf)$, we have 
that $\nu\circ p_2^{-1}(p_2(E))=0$ whenever $\nu(E)=0$.
\elem

\

\fr We now shall recall the concept of Vitali relations defined on the
page 151 of Federer's book \cite{federer}. Let $X$ be an arbitrary
set. By a covering relation on $X$ one means a subset of 
$$
\{(x,S):x\in S\sbt X\}.
$$
If $C$ is a covering relation on $X$ and $Z\sbt X$, one puts
$$
C(Z)=\{S\sbt X:(x,S)\in C \  \text{ for some } \ x\in Z\}.
$$
One then says that $C$ is fine at $x$ if 
$$
\inf\{\diam(S):(x,S)\in C\} = 0.
$$
If in addition $X$ is a metric space and a Borel measure $\mu$ is given
on $X$, then a covering relation $V$ on $X$ is called a Vitali relation
if 
\begin{itemize}
\item[(a)] All elements of $V(X)$ are Borel sets.
\item[(b)] $V$ is fine at each point of $X$
\item[(c)] If $C\sbt V$, $Z\sbt X$ and $C$ is fine at each point of
  $Z$, then there exists a countable disjoint subfamily $\Fa$ of $C(Z)$
  such that $\mu(Z\sms \cup\,\Fa)=0$.
\end{itemize}

\

\fr Now, given $(\tau,z)\in J(\tf)\sms \Sing(\tf)$, let 
$$
\Ba_{(\tau,z)}=\lt\{\((\tau,z),\ov B_j(\tau,z)\)\rt\}_{j=1}^\infty,
$$
where the sets $B_j(\tau,z)$ are defined by formula (\ref{2h67}). Let
$$
\Ba=\bu_{(\tau,z)\in J(\tf)\sms \Sing(\tf)}\Ba_{(\tau,z)}
$$
and, following notation from Federer's book \cite{federer}, let
$$
\Ba_2:=\Ba(J(\tf)\sms \Sing(\tf))=\{\ov B_j(\tau,z):(\tau,z)\in
J(\tf)\sms \Sing(\tf),\, j\ge1\}. 
$$
We shall prove the following.

\

\blem\label{l1h71}
The family $\Ba_2$ is a Vitali relation for the measure $\^m_h$ on the
set $J(\tf)\sms \Sing(\tf)$. 
\elem

{\sl Proof.} Fix $(\tau,z)\in J(\tf)\sms \Sing(\tf)$. Since $p_{2}(\ov B_j(\tau,z))\sbt 
\ov B(z,r_j(\tau,z))$ and since
\beq\label{1h71}
\lim_{j\to\infty}r_j(\tau,z)=0,
\eeq
we have
$$
\lim_{j\to\infty}\diam(\ov B_j(\tau,z))=0.
$$
This means that the relation $\Ba$ is fine at the point $(\tau,z)$. Aiming to apply 
Theorem~2.8.17 from \cite{federer}, we set
$$
\d((\ov B_j(\om,x)))=r_j(\om,x)
$$
for every $\ov B_j(\om,x)\in \Ba_2$. Fix $1<\ka<+\infty$ (a different notation for 
$1<\tau<+\infty$ appearing in Theorem~2.8.17 from \cite{federer}). With the notation 
from page 144 in \cite{federer} we have
$$
\hat{\ov B_j}(\tau,z)
=    \bu\{B:B\in\Ba_2,\, B\cap\ov B_j(\tau,z)\ne\es,\,\d(B)\le\ka\d(\ov B_j(\tau,z))\}
\sbt p_{2}^{-1}\left( (\ov B\(z,(1+2\ka)r_j(\tau,z)\)\right).
$$
So, in virtue of Theorem~\ref{t1h61} and (\ref{1h67}), we obtain
$$
\d(\ov B_j(\tau,z))+{\^m_{h}(\hat{\ov B_j}(\tau,z))\over \^m_{h}(\ov B_j(\tau,z))}
\le r_j(\tau,z)+{C\((1+2\ka)r_j(\tau,z)\)^h\over C^{-1}r_j^h(\tau,z)}
=C^2(1+2\ka)^h+r_j(\tau,z),
$$
where $C>0$ is a constant independent of $j.$ 
Hence, using (\ref{1h71}), we get
$$
\lim_{j\to\infty}\lt(\d(\ov B_j(\tau,z))+{\^m_{h}(\hat{\ov B_j}(\tau,z))\over \^m_{h}(\ov B_j(\tau,z))}\rt)
\le C^2(1+2\ka)^h<+\infty.
$$
Thus, all the hypothesis of Theorem~2.8.17 in \cite{federer}, p. 151 are verified 
and the proof of our lemma is complete. \endpf

\

\fr As an immediate consequence of this lemma and Theorem~2.9.11, p. 158 in \cite{federer}
we get the following.

\

\bprop\label{p1h73}
For every Borel set $A\sbt J(\tf)\sms \Sing(\tf)$ let
$$
A_h=\lt\{(\tau,z)\in A:\lim_{j\to\infty}\frac{\^m_{h}(A\cap \ov B_j(\tau,z))}{\^m_{h}(\ov B_j(\tau,z))}=1\rt\}.
$$
Then $\^m_{h}(A_h)=\^m_{h}(A)$.
\eprop

\

\fr Now, we shall prove the following. 

\

\blem\label{l2h73}
The measure $\^m_{h}$ is weakly metrically exact for the map $\tf:J(\tf)\to J(\tf)$. In particular
it is ergodic and conservative.
\elem

{\sl Proof.} Fix a Borel set $F\sbt J(\tf)\sms \Sing(\tf)$ with $\^m_{h}(F)>0$. By Proposition~\ref{p1h73}
there exists at least one point $(\tau,z)\in F_h$. Our first goal is to show that
\beq\label{1h73}
\lim_{j\to\infty}{\^m_{h}(\tf^{n_j}(F)\cap p_2^{-1}(\ov B(f_{\tau|_{n_j}}(z),\eta/2))\) \over
     \^m_{h}\(p_2^{-1}(\ov B(f_{\tau|_{n_j}}(z),\eta/2))\)}
     =1,
\eeq
where, we recall $\eta=\eta(\tau,z)>0$ is the number produced in Proposition~\ref{pu6.1h23} and
$(n_j)_1^\infty$ is the corresponding sequence produced there. Indeed, suppose for the contrary
that
$$
\ka={1\over 2}\liminf_{j\to\infty}{\^m_{h}\(p_2^{-1}(\ov B(f_{\tau|_{n_j}}(z),\eta/2))\sms \tf^{n_j}(F)\) \over
    \mh\(p_2^{-1}(\ov B(f_{\tau|_{n_j}}(z),\eta/2))\)}
    >0.
$$
Then, disregarding finitely many $n$s we may assume that
$$
{\mh\(p_2^{-1}(\ov B(f_{\tau|_{n_j}}(z),\eta/2))\sms \tf^{n_j}(F)\) \over
    \mh\(p_2^{-1}(\ov B(f_{\tau|_{n_j}}(z),\eta/2))\)}
\ge\ka>0
$$
for all $j\ge 1$. But
$$
\tf_{\tau |_{n_{j}},z}^{-n_j}\(p_2^{-1}(\ov B(f_{\tau|_{n_j}}(z),\eta/2))\sms \tf^{n_j}(F)\)
\sbt \([\tau|_{n_j}]\times \ov B(z,{1\over 2}K\eta\lt|f_{\tau|_{n_j}}'(z)\rt|^{-1})\)\sms F
=    \ov B_j(\tau,z)\sms F
$$
and
$$
\aligned
\mh\(\tf_{\tau |_{n_{j}},z}^{-n_j}\(p_2^{-1}(\ov B(f_{\tau|_{n_j}}(z)&,\eta/2))\sms \tilde{f}^{n_j}(F)\)\) \ge \\
&\ge K^{-h}\lt|f_{\tau|_{n_j}}'(z)\rt|^{-h}\mh\(p_2^{-1}(\ov B(f_{\tau|_{n_j}}(z),\eta/2))\sms \tilde{f}^{n_j}(F)\) \\
&\ge \ka K^{-h}\lt|f_{\tau|_{n_j}}'(z)\rt|^{-h}\mh\(p_2^{-1}(\ov B(f_{\tau|_{n_j}}(z),\eta/2))\) \\
&=   \ka K^{-h}\lt|f_{\tau|_{n_j}}'(z)\rt|^{-h}m_h\(\ov B(f_{\tau|_{n_j}}(z),\eta/2)\) \\
&\ge \ka K^{-h}M_{\eta/2}\lt|f_{\tau|_{n_j}}'(z)\rt|^{-h}.
\endaligned
$$
Hence, making use of Theorem~\ref{t1h61}, we obtain
$$
\aligned
\mh(\ov B_j(\tau,z)\sms F)
&\ge \ka K^{-h}M_{\eta/2}\lt|f_{\tau|_{n_j}}'(z)\rt|^{-h} \\
&=   \ka(K^2\eta/2)^{-h}M_{\eta/2}r_j^h(\tau,z)              \\        
&\ge C^{-1}(K^2\eta/2)^{-h}M_{\eta/2}\mh(\ov B_j(\tau,z)).
\endaligned
$$
Thus,
$$
{\mh(\ov B_j(\tau,z)\sms F)\over \mh(\ov B_j(\tau,z))}
\ge C^{-1}(K^2\eta/2)^{-h}M_{\eta/2}>0.
$$
Letting $j\to\infty$ this contradicts the fact that $(\tau,z)\in F_h$ and finishes the proof
of (\ref{1h73}). Now since $\tf:J(\tf)\to J(\tf)$ is topologically exact, there exists $q\ge 0$
such that $\tf^q(p_2^{-1}(B(w,\eta/2)))\spt J(\tf)$ for all $w\in J(G)$. It then easily
follows from (\ref{1h73}) and conformality of $\mh$ that
$$
\limsup_{k\to\infty} \mh(\tf^k(F))
\ge \limsup_{j\to\infty} \mh(\tf^{q+n_j})(F))
=1.
$$
Noting also that $\mh(\Sing(\tf))=0$ (by Corollary~\ref{c3h61}), the weak metric 
exactness of $\mh$ is proved. Ergodicity and conservativity follow then from 
Fact~\ref{f2h65}. We are done. \endpf

\

\bcor\label{c1h77}
$\mh$ is the only $h$-conformal measure on $J(\tf)$ for the map $\tf:J(\tf)\to J(\tf)$. 
\ecor

{\sl Proof.} Let $\nu$ be an arbitrary $h$-conformal measure on $J(\tf)$ for the 
map $\tf:J(\tf)\to J(\tf)$. Since, by Lemma~\ref{l1h64} the measure $\nu$ is absolutely 
continuous with respect $\mh$, it follows from Theorem~2.9.7 in \cite{federer}, p. 155
and Lemma~\ref{l1h71} that for $\mh$-a.e. $(\tau,z)\in J(\tf)\sms \Sing(\tf)$,
$$
\aligned
{d\nu\over d\mh}(\tf(\tau,z))
& =  \lim_{j\to\infty}{\nu(\ov B_j(\tf(\tau,z))) \over \mh(\ov B_j(\tf(\tau,z)))}
=\lim_{j\rightarrow \infty }\frac{\nu (\tf (\overline{B}_{j}(\tau ,z)))}
{\mh (\tf (\overline{B}_{j}(\tau ,z)))}\\ 
& = \lim _{j\rightarrow \infty }\frac{\int _{\ov{B}_{j}(\tau ,z)}|\tf'|^{h}\ d\nu }
{\int _{\ov{B}_{j}(\tau ,z)} |\tf'|^{h}\ d\mh} 
=\lim _{j\rightarrow \infty }\frac{\nu (\ov{B}_{j}(\tau, z))}{\mh (\ov{B}_{j}(\tau ,z))}
={d\nu\over d\mh}(\tau,z).
\endaligned
$$
Since, by Lemma~\ref{l2h73}, the measure $\mh$ is ergodic, it follows that the 
Radon-Nikodym derivative ${d\nu\over d\mh}$ is $\mh$-almost everywhere constant.
Since $\nu$ and $\mh$ are equivalent (by Lemma~\ref{l1h64}) this derivative must
be almost everywhere, with respect to $\mh$ as well as $\nu$, equal to $1$. Thus
$\nu=\mh$ and we are done. \endpf

\

\fr In order to prove the existence of a Borel probability $\tf$-invariant measure on 
$J(\tf)$ equivalent to $\mh$, we will use Marco-Martens method originated in \cite{martens}.
This means that we shall first produce a $\sg$-finite $\tf$-invariant measure equivalent
to $\mh$ (this is the Marco-Martens method) and then we will prove this measure to be
finite. The heart of the Martens' method is the following theorem which is a generalization
of Proposition 2.6 from \cite{martens}. It is a generalization in the sense that we do not
assume our probability space $(X,\mathcal{B},m )$ below to be a $\sg$-compact metric space,
neither assume we that our map is conservative, instead, we merely assume that item (6)
in Definition~\ref{d:mmmap} holds. Also, the proof we provide below is based on the 
concept of Banach limits rather than (see \cite{martens}) on the notion of weak limits.

\

\bdfn
\label{d:mmmap}
Suppose $(X,\mathcal{B},m )$ is a probability space. 
Suppose $T:X\rightarrow X$ is a measurable mapping, such that 
$T(A)\in \mathcal{B}$ whenever $A\in \mathcal{B}$, and such that 
the measure $m$ is quasi-invariant with respect to $T$, 
meaning that $m\circ T^{-1}\prec m.$ Suppose further that 
there exists a countable family $\{ X_n\} _{n=0}^{\infty }$ 
of subsets of $X$ with the following properties.
\begin{itemize}
\item[(1)]
For all $n\geq 0$, $X_{n}\in \mathcal{B}.$ 
\item[(2)]
$m(X\setminus \bigcup _{n=0}^{\infty }X_{n})=0.$ 
\item[(3)] 
For all $m,n\geq 0$, there exists a $j\geq 0$ such that 
$m(X_{m}\cap T^{-j}(X_{n}))>0.$ 
\item[(4)]
For all $j\geq 0$ there exists a $K_{j}\geq 1$ such that 
for all $A,B\in \mathcal{B}$ with $A,B\subset X_{j}$ and for all 
$n\geq 0$, 
$$m(T^{-n}(A))m(B)\leq K_{j}m(A)m(T^{-n}(B)).$$ 
\item[(5)] 
$\sum _{n=0}^{\infty }m(T^{-n}(X_{0}))=+\infty .$
\item[(6)] 
$\lim _{l\rightarrow \infty }m(T(\bigcup _{j=l}^{\infty }Y_{j}))=0$, 
where $Y_{j}:= X_{j}\setminus \bigcup _{i<j} X_{i}.$  
\end{itemize}
Then the map $T:X\rightarrow X$ is called a Marco-Martens map and 
$\{ X_{j}\} _{j=0}^{\infty }$ is called a Marco-Martens cover. 
\edfn 

\brem
\label{r:mmmapf1}
Note that {\em (6)} is satisfied if the map $T:X\rightarrow X$ is 
finite-to-one. For, if $T$ is finite-to-one, 
then $\bigcap _{l=1}^{\infty }T(\bigcup _{j=l}^{\infty }Y_{j})=\emptyset .$ 

\erem
\

\bthm\label{t1h75} 
Let $(X,\mathcal{B},m)$ be a probability  space and 
let $T:X\rightarrow X$ be a Marco-Martens map with 
a Marco-Martens cover $\{ X_{j}\} _{j=0}^{\infty }.$ 
Then, there exists a $\sigma$-finite $T$-invariant 
measure $\mu $ on $X$ equivalent to $m.$ 
In addition, $0<\mu (X_{j})<+\infty $ for each $j\geq 0.$ 
The measure $\mu $ is constructed in the following way: 
Let $l_{B}:l_{\infty }\rightarrow \R$ 
be a Banach limit and let 
$Y_{j}:=X_{j}\setminus \bigcup _{i<j}X_{i}$ for each $j\geq 0.$  
For each $A\in \mathcal{B}$, set 
$$m_{n}(A):=\frac{\sum _{k=0}^{n}m(T^{-k}(A))}{\sum _{k=0}^{n}m(T^{-k}(X_{0}))} .$$
If $A\in \mathcal{B}$ and $A\subset Y_{j}$ with some $j\geq 0$, 
then we obtain $(m_{n}(A))_{n=1}^{\infty }\in l_{\infty }.$ 
We set 
$$\mu (A):= l_{B}((m_{n}(A))_{n=1}^{\infty }).$$ 
For a general measurable subset $A\subset X$, set 
$$\mu (A):=\sum _{j=0}^{\infty }\mu (A\cap Y_{j}).$$ 
In addition, if for a measurable subset $A\subset X$,  
the sequence $(m_{n}(A))_{n=1}^{\infty }$ is bounded, 
then we have the following formula. 
\begin{equation}
\label{eq:muequ}
\mu (A)=l_{B}((m_{n}(A))_{n=1}^{\infty })-\lim _{l\rightarrow \infty }
l_{B}((m_{n}(A\cap \bigcup _{j=l}^{\infty }Y_{j}))_{n=0}^{\infty}).
\end{equation}
Furthermore, 
if the transformation $T:X\rightarrow X$ is ergodic (equivalently with respect to 
the measure $m$ or $\mu $), 
then the $T$-invariant measure $\mu $ is unique up to a multiplicative 
constant. 
\ethm

In order to prove Theorem~\ref{t1h75}, 
we need several lemmas.

\begin{lem}
\label{l:dumeasure}
If $(Z,\mathcal{F})$ is a $\sigma $-algebra of sets, $Z=\bigcup _{j=0}^{\infty }Z_{j}$ 
is a disjoint union of measurable sets (elements of $\mathcal{F})$ ,
and for each $j\geq 0$,  
$\nu _{j}$ is a finite measure on $Z_{j}$, 
then the function 
$A\mapsto \nu (A):=\sum _{j=0}^{\infty }\nu _{j}(A\cap Z_{j})$, is a
$\sigma $-finite measure on $Z.$  
\end{lem}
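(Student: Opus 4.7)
The plan is to verify the three defining properties of a measure directly from the formula $\nu(A) = \sum_{j=0}^{\infty} \nu_j(A \cap Z_j)$, and then to exhibit an explicit countable partition witnessing $\sigma$-finiteness. The only non-routine ingredient is an application of Tonelli's theorem for double series of non-negative extended real numbers.

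First I would check that $\nu$ is well-defined as a function $\mathcal{F}\to[0,\infty]$: for $A\in\mathcal{F}$, each intersection $A\cap Z_j$ lies in $\mathcal{F}$ because $Z_j\in\mathcal{F}$, so $\nu_j(A\cap Z_j)\in[0,\nu_j(Z_j)]\subset[0,\infty)$ is defined, and the sum of non-negative terms is a well-defined element of $[0,\infty]$. In particular $\nu(\emptyset)=\sum_{j=0}^{\infty}\nu_j(\emptyset)=0$.

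Next I would establish countable additivity. Let $\{A_k\}_{k=1}^{\infty}\subset\mathcal{F}$ be pairwise disjoint and set $A=\bigcup_{k=1}^{\infty}A_k$. Since the sets $A_k\cap Z_j$ are pairwise disjoint in $k$ for each fixed $j$, countable additivity of $\nu_j$ gives
\[
\nu(A)=\sum_{j=0}^{\infty}\nu_j\Bigl(\bigcup_{k=1}^{\infty}(A_k\cap Z_j)\Bigr)=\sum_{j=0}^{\infty}\sum_{k=1}^{\infty}\nu_j(A_k\cap Z_j).
\]
By Tonelli's theorem for non-negative double series, the order of summation may be swapped, yielding
\[
\nu(A)=\sum_{k=1}^{\infty}\sum_{j=0}^{\infty}\nu_j(A_k\cap Z_j)=\sum_{k=1}^{\infty}\nu(A_k).
\]
This is the main (and only slightly non-trivial) step; everything else is bookkeeping.

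Finally, $\sigma$-finiteness is immediate from the hypotheses: the sets $Z_j$ are pairwise disjoint, lie in $\mathcal{F}$, their union is $Z$, and for each $j$ we have $\nu(Z_j)=\sum_{i=0}^{\infty}\nu_i(Z_j\cap Z_i)=\nu_j(Z_j)<\infty$ since $\nu_j$ is finite and the intersections vanish for $i\neq j$. Thus $\{Z_j\}_{j=0}^{\infty}$ is a countable measurable partition of $Z$ into sets of finite $\nu$-measure, completing the proof. I do not anticipate any serious obstacle; the statement is a routine gluing of finite measures supported on disjoint pieces.
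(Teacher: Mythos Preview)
Your proof is correct and follows essentially the same approach as the paper's: both establish countable additivity by using the countable additivity of each $\nu_j$ and then swapping the order of the resulting non-negative double sum. Your version is in fact slightly more complete, since you explicitly verify $\nu(\emptyset)=0$ and the $\sigma$-finiteness via $\nu(Z_j)=\nu_j(Z_j)<\infty$, points the paper leaves implicit.
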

\begin{proof}
Let $A\in \mathcal{F}$ and let $(A_{n})_{n=1}^{\infty }$ be a partition of $A$ into sets in 
$\mathcal{F}$. Then 
\begin{align*}
\nu (A) = & \sum _{j=0}^{\infty }\nu _{j}(A\cap Z_{j})=\sum
_{j=0}^{\infty }\nu _{j}(\bigcup _{n=1}^{\infty }(A_{n}\cap Z_{j}))\\  
& = \sum _{j=0}^{\infty }\sum _{n=1}^{\infty }\nu _{j}(A_{n}\cap Z_{j})
=\sum _{n=1}^{\infty }\sum _{j=0}^{\infty }\nu _{j}(A_{n}\cap Z_{j})=\sum _{n=1}^{\infty }\nu (A_{n}),
\end{align*}
where we could have changed the order of summation since all terms
involved were non-negative.  
Thus, we have completed the proof of our lemma.
\end{proof}
We now suppose that we have the assumption of Theorem~\ref{t1h75}. 
\begin{lem}
\label{l:muxjfin}
For every $j\geq 0$, 
the sequence $(m_{n}(X_{j}))_{n=1}^{\infty }$ is bounded and 
$\mu (Y_{j})\leq \mu (X_{j})<+\infty .$ 
\end{lem}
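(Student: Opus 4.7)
The plan is to prove boundedness of $(m_n(X_j))_{n=1}^\infty$ by combining property (4) (the uniform distortion-type comparison of $m(T^{-n}(A))$ and $m(T^{-n}(B))$ for $A,B\subset X_j$) with property (3) (the ``transitivity'' that some backward iterate of $X_0$ meets $X_j$), and then deduce $\mu(X_j)<+\infty$ by decomposing $X_j$ into finitely many pieces lying in distinct $Y_i$'s and applying the Banach limit.

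First I would fix $j\ge 0$ and invoke property (3) to choose $j_0=j_0(j)\ge 0$ such that
$$c_j:=m\(X_j\cap T^{-j_0}(X_0)\)>0.$$
Set $A=X_j$ and $B=X_j\cap T^{-j_0}(X_0)\subset X_j$. Applying property (4) to this pair $A,B\subset X_j$, for every $n\ge 0$ we have
$$m(T^{-n}(X_j))\cdot c_j\le K_j\, m(X_j)\, m\(T^{-n}(X_j\cap T^{-j_0}(X_0))\).$$
Since $T^{-n}(X_j\cap T^{-j_0}(X_0))\subset T^{-(n+j_0)}(X_0)$, this yields the key pointwise estimate
$$m(T^{-n}(X_j))\le D_j\, m(T^{-(n+j_0)}(X_0)),\qquad D_j:=K_jm(X_j)/c_j.$$
Summing over $k=0,1,\ldots,n$ gives
$$\sum_{k=0}^n m(T^{-k}(X_j))\le D_j \sum_{k=0}^{n+j_0} m(T^{-k}(X_0)),$$
and therefore
$$m_n(X_j)\le D_j\(1+\frac{\sum_{k=n+1}^{n+j_0}m(T^{-k}(X_0))}{\sum_{k=0}^n m(T^{-k}(X_0))}\)\le D_j\(1+\frac{j_0}{m(X_0)}\),$$
where the last bound uses that the numerator contains at most $j_0$ terms each $\le 1$ and the denominator is at least $m(X_0)>0$ (by property (5) the denominator also diverges, confirming the ratio stays bounded for large $n$). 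Thus $(m_n(X_j))_{n=1}^\infty$ is bounded, say by some constant $M_j$.

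Next I would deduce finiteness of $\mu(X_j)$. Since the sets $\{Y_i\}_{i=0}^\infty$ are pairwise disjoint and cover $\bigcup_{i=0}^\infty X_i$, and since $X_j\subset\bigcup_{i\le j}X_i=\bigcup_{i\le j}Y_i$, we may write the finite disjoint decomposition
$$X_j=\bigcup_{i=0}^j (X_j\cap Y_i).$$
For each $i\le j$, the set $X_j\cap Y_i$ is contained in $Y_i$ and in $X_j$, so by monotonicity of $m_n$ we have $m_n(X_j\cap Y_i)\le m_n(X_j)\le M_j$ for every $n$. Hence the Banach limit is well-defined and
$$\mu(X_j\cap Y_i)=l_B\((m_n(X_j\cap Y_i))_{n=1}^\infty\)\le M_j<+\infty.$$
Summing this over the finitely many $i\le j$ and using the definition $\mu(X_j)=\sum_{i=0}^\infty\mu(X_j\cap Y_i)=\sum_{i=0}^{j}\mu(X_j\cap Y_i)$ (all other terms vanish), we obtain $\mu(X_j)\le (j+1)M_j<+\infty$. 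The inequality $\mu(Y_j)\le\mu(X_j)$ is immediate from $Y_j\subset X_j$ and monotonicity of $\mu$, completing the proof.

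The main obstacle is only the first paragraph: correctly combining properties (3) and (4) to convert a bound involving backward iterates of $X_j$ into one involving backward iterates of $X_0$. Once the pointwise comparison $m(T^{-n}(X_j))\le D_j\, m(T^{-(n+j_0)}(X_0))$ is in hand, boundedness of $m_n(X_j)$ and hence finiteness of $\mu(X_j)$ follow from routine bookkeeping and the positivity of $m(X_0)$.
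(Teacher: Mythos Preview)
Your argument is correct and essentially identical to the paper's: both invoke property~(3) to find $q$ with $m(X_j\cap T^{-q}(X_0))>0$, use property~(4) to bound $m(T^{-n}(X_j))$ by a constant times $m(T^{-(n+q)}(X_0))$, and then handle $\mu(X_j)$ via the finite decomposition $X_j=\bigcup_{i\le j}(X_j\cap Y_i)$. The only cosmetic differences are that you bound the tail ratio by $j_0/m(X_0)$ rather than invoking the divergence in~(5), and that your final appeal to ``monotonicity of $\mu$'' is slightly premature (since $\mu$ is not yet shown to be a measure)---but your own decomposition already gives $\mu(Y_j)=\mu(X_j\cap Y_j)$ as one non-negative summand of $\mu(X_j)$, which is all that is needed.
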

\begin{proof}
In virtue of (3) of Definition~\ref{d:mmmap} there exists a $q\geq 0$ such that 
$m(X_{j}\cap T^{-q}(X_{0}))>0.$ 
By (4) of Definition~\ref{d:mmmap}, 
we have for all $n\geq 0$ that 
\begin{align*}
m_{n}(Y_{j})\leq m_{n}(X_{j})\leq & 
K_{j}\frac{m(X_{j})}{m(X_{j}\cap T^{-q}(X_{0}))}m_{n}(X_{j}\cap T^{-q}(X_{0}))\\ 
\leq & K_{j}\frac{m(X_{j})}{m(X_{j}\cap T^{-q}(X_{0}))}m_{n+q}(X_{0})
\frac{\sum _{k=0}^{n+q}m(T^{-k}(X_{0}))}{\sum _{k=0}^{n}m(T^{-k}(X_{0}))}\\ 
= & K_{j}\frac{m(X_{j})}{m(X_{j}\cap T^{-q}(X_{0}))}
\left( 1+\frac{\sum _{k=n+1}^{n+q}m(T^{-k}(X_{0}))}{\sum _{k=0}^{n}m(T^{-k}(X_{0}))}\right) \\ 
\leq & K_{j}\frac{m(X_{j})}{m(X_{j}\cap T^{-q}(X_{0}))}
\left( 1+\frac{q}{\sum _{k=0}^{n}m(T^{-k}(X_{0}))}\right) .
\end{align*}
It follows from (5) of Definition~\ref{d:mmmap} that 
$(m_{n}(X_{j}))_{n=1}^{\infty }\in l_{\infty }$ and 
$$\mu (Y_{j})\leq K_{j}m(X_{j})/m(X_{j}\cap T^{-q}(X_{0}))<\infty .$$ 
Since $X_{j}=\bigcup _{i=0}^{j}Y_{i}$, we are therefore done. 
\end{proof}

Now, for every $j\geq 0$, set 
$\mu _{j}:= \mu |_{Y_{j}}.$ 
\begin{lem}
\label{l:mumcomp}
For every $j\geq 0$ such that $\mu (Y_{j})>0$, 
and for every measurable set $A\subset Y_{j}$, we have 
$$K_{j}^{-1}\frac{\mu (Y_{j})}{m(Y_{j})}m(A)\leq \mu _{j}(A)\leq K_{j}\frac{\mu (Y_{j})}{m(Y_{j})}m(A).$$
\end{lem}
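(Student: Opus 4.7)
The plan is to deduce the bounds directly from property (4) of the Marco-Martens cover, using $Y_j \subset X_j$ to justify the use of that property and the previously established boundedness from Lemma~\ref{l:muxjfin} to legitimize the Banach limit passage.

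First I would observe that for $A \subset Y_j$, the definition of $\mu$ gives $\mu_j(A) = \mu(A \cap Y_j) = \mu(A) = l_{B}((m_n(A))_{n=1}^\infty)$ directly (the sum over $i$ collapses), and by Lemma~\ref{l:muxjfin} the sequence $(m_n(A))$ is bounded since $A \subset X_j$, so the Banach limit is well-defined. I would also note that $m(Y_j) > 0$ under the hypothesis $\mu(Y_j) > 0$: otherwise quasi-invariance $m \circ T^{-1} \prec m$ together with induction yields $m(T^{-k}(Y_j)) = 0$ for every $k \geq 0$, hence $m_n(Y_j) = 0$ for every $n$ and $\mu(Y_j) = 0$, a contradiction.

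Next I would apply item (4) of Definition~\ref{d:mmmap} to the pair $(A, Y_j) \subset X_j \times X_j$ to obtain
$$m(T^{-n}(A))\, m(Y_j) \leq K_j\, m(A)\, m(T^{-n}(Y_j)) \qquad \text{for all } n \geq 0.$$
Summing over $n = 0, 1, \ldots, N$ and dividing by $\sum_{k=0}^N m(T^{-k}(X_0))$ gives $m_N(A)\, m(Y_j) \leq K_j\, m(A)\, m_N(Y_j)$. Since both sequences are bounded, linearity and monotonicity of the Banach limit $l_B$ yield
$$\mu_j(A)\, m(Y_j) \leq K_j\, m(A)\, \mu(Y_j),$$
which is the upper bound. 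Swapping the roles (apply item (4) to the pair $(Y_j, A)$) and performing the same averaging/Banach-limit procedure produces $\mu(Y_j)\, m(A) \leq K_j\, m(Y_j)\, \mu_j(A)$, which is the lower bound after dividing by $m(Y_j) > 0$.

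There is no real obstacle here; the only subtle points are the verification that $m(Y_j) > 0$ (needed so that we can rearrange and so that the constants are finite) and the observation that $\mu|_{Y_j}$ agrees with $l_B \circ m_n$ on measurable subsets of $Y_j$, both of which follow immediately from the definitions and the quasi-invariance of $m$. The heart of the argument is the symmetric comparison built into axiom (4), which is preserved under the positive linear functional $l_B$.
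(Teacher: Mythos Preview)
Your proof is correct and is precisely the unpacking of the paper's one-line justification (``immediate consequence of (4) of Definition~\ref{d:mmmap} and the definition of the measure $\mu$''). The paper does not spell out the passage through the Banach limit or the reason $m(Y_j)>0$, but your argument is exactly what is meant.
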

\begin{proof}
This is an immediate consequence of (4) of Definition~\ref{d:mmmap} and the 
definition of the measure $\mu .$  
\end{proof}
\begin{lem}
\label{l:mujcam}
For any $j\geq 0$, $\mu _{j}$ is a (countably additive) measure on $Y_{j}.$ 
\end{lem}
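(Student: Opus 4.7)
The plan is to deduce countable additivity of $\mu_j$ from three ingredients already available: finite additivity (inherited from the linearity of the Banach limit $l_B$), the comparison with $m$ provided by Lemma~\ref{l:mumcomp}, and the countable additivity of the underlying probability measure $m$. Since the proof of Lemma~\ref{l:mumcomp} only relies on finite additivity of $\mu$ on subsets of $Y_j$, there is no circularity.

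First I would verify finite additivity. If $A,B\subset Y_j$ are disjoint and measurable, then $m_n(A\sqcup B)=m_n(A)+m_n(B)$ for every $n$ directly from the definition of $m_n$ and the countable additivity of $m$. Since $l_B$ is a positive linear functional on $l_\infty$, it follows that $\mu_j(A\sqcup B)=\mu_j(A)+\mu_j(B)$, that $\mu_j(A)\ge 0$ always, and by induction that $\mu_j$ is finitely additive and monotone on measurable subsets of $Y_j$.

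Next I would dispose of the degenerate cases. If $m(Y_j)=0$, then quasi-invariance yields $m(T^{-k}(Y_j))=0$ for all $k\ge 0$, hence $m_n(A)=0$ for every $n$ and every measurable $A\subset Y_j$, so $\mu_j\equiv 0$. If $m(Y_j)>0$ but $\mu(Y_j)=0$, then monotonicity of $\mu_j$ (finite additivity) forces $\mu_j\equiv 0$. In both cases countable additivity is trivial, so we may assume $m(Y_j),\mu(Y_j)>0$ and invoke Lemma~\ref{l:mumcomp}, noting that $\mu(Y_j)\le\mu(X_j)<+\infty$ by Lemma~\ref{l:muxjfin}.

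For the genuine countable additivity step, let $A=\bigsqcup_{k=1}^\infty A_k$ with each $A_k\subset Y_j$ measurable, and split $A=B_N\sqcup C_N$ where $B_N=\bigsqcup_{k=1}^N A_k$ and $C_N=\bigsqcup_{k=N+1}^\infty A_k$. Finite additivity gives
$$
\mu_j(A)=\mu_j(B_N)+\mu_j(C_N)=\sum_{k=1}^N\mu_j(A_k)+\mu_j(C_N).
$$
The main (and only non-routine) step is to show $\mu_j(C_N)\to 0$ as $N\to\infty$. Lemma~\ref{l:mumcomp} supplies the domination
$$
\mu_j(C_N)\le K_j\frac{\mu(Y_j)}{m(Y_j)}\,m(C_N),
$$
and since $m$ is a finite countably additive measure and $C_N\downarrow\es$, we have $m(C_N)\to 0$. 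Therefore $\mu_j(C_N)\to 0$ and $\mu_j(A)=\sum_{k=1}^\infty\mu_j(A_k)$, completing the proof. The hard part in this lemma is really conceptual, namely seeing that Banach-limit non-commutativity with infinite sums can be side-stepped because the tail of the decomposition is controlled absolutely by $m$ via the uniform comparison constant $K_j\mu(Y_j)/m(Y_j)$; once this is noticed, the verification is immediate.
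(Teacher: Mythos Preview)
Your proof is correct and follows essentially the same approach as the paper's: both control the tail contribution via property (4) of Definition~\ref{d:mmmap}, which gives a uniform comparison between $m_n$ (or $\mu_j$) and $m$ on subsets of $Y_j$. The only difference is packaging: the paper works directly in $l_\infty$, showing that the partial sums $\sum_{k=1}^{l}(m_n(A_k))_n$ converge in norm to $(m_n(A))_n$ (with the tail bounded by $\frac{K_j}{m(Y_j)}\|(m_n(Y_j))_n\|_\infty\sum_{k>l}m(A_k)$) and then invokes continuity of $l_B$, whereas you first pass to the Banach limit to get finite additivity and then bound $\mu_j(C_N)$ via the already-available Lemma~\ref{l:mumcomp}. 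Your route is slightly more streamlined since it reuses Lemma~\ref{l:mumcomp} rather than redoing the estimate inline, and your remark about non-circularity is correct (indeed the proof of Lemma~\ref{l:mumcomp} does not use any additivity of $\mu$ at all, only the pointwise inequality from (4) and positivity of $l_B$).
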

\begin{proof}
Let $j\geq 0.$ 
We may assume without loss of generality that $\mu _{j}(Y_{j})>0.$ 
Let $A\subset Y_{j}$ be a measurable set and let 
$(A_{k})_{k=1}^{\infty }$ be a countable partition of $A$ into measurable sets. 
For every $n\geq 1$ and for every $l\geq 1$, we have 
\begin{multline}
\label{eq:1rds103}
\left(\sum _{k=1}^{\infty }m_{n}(A_{k}) \right) _{n=1}^{\infty }
- \sum _{k=1}^{l} \left(m_{n}(A_{k})\right) _{n=1}^{\infty }
=\left(\sum _{k=1}^{\infty }m_{n}(A_{k})\right)_{n=1}^{\infty }
-\left(\sum _{k=1}^{l}m_{n}(A_{k})\right)_{n=1}^{\infty }\\ 
=\left(\sum _{k=l+1}^{\infty }m_{n}(A_{k})\right) _{n=1}^{\infty }.
 \ \ \ \ \ \ \ \ \ \ \ \ \ \ \ \ \ \ \ \ \ \ \ \  
\end{multline}
It therefore follows from (4) of Definition~\ref{d:mmmap} 
that 
\begin{align*}
\left\| \left( \sum _{k=1}^{\infty }m_{n}(A_{k})\right) _{n=1}^{\infty }-\sum _{k=1}^{l}\left( m_{n}(A_{k})\right) _{n=1}^{\infty }\right\| _{\infty }
= & \left\| \left( \sum _{k=l+1}^{\infty }m_{n}(A_{k})\right) _{n=1}^{\infty }\right\| \\ 
\leq & \left\| \frac{K_{j}}{m(Y_{j})} \left( m_{n}(Y_{j})\sum _{k=l+1}^{\infty }m(A_{k})\right) _{n=1}^{\infty }\right\| _{\infty }\\ 
= & \frac{K_{j}}{m(Y_{j})}\left\| \left( m_{n}(Y_{j})\sum _{k=l+1}^{\infty }m(A_{k})\right) _{n=1}^{\infty }\right\| _{\infty }.
\end{align*}
Since, by Lemma~\ref{l:muxjfin}, $(m_{n}(Y_{j}))_{n=1}^{\infty }\in l_{\infty }$, and 
since $\lim _{l\rightarrow \infty }\sum _{k=l+1}^{\infty }m(A_{k})=0$, 
we conclude that 
$\lim _{l\rightarrow \infty }\| (\sum _{k=1}^{\infty }m_{n}(A_{k}))_{n=1}^{\infty }
-\sum _{k=1}^{l}(m_{n}(A_{k}))_{n=1}^{\infty }\| _{\infty }=0.$ 
This means that in the Banach space $l_{\infty }$, we have 
$(\sum _{k=1}^{\infty }m_{n}(A_{k}))_{n=1}^{\infty }=\sum
_{k=1}^{\infty }(m_{n}(A_{k}))_{n=1}^{\infty }.$  
Hence, using continuity of the Banach limit $l_{B}:l_{\infty }\rightarrow\R$, 
we get, 
\begin{align*}
\mu (A) =& l_{B}((m_{n}(A))_{n=1}^{\infty })=l_{B}((m_{n}(\bigcup
_{k=1}^{\infty }A_{k}))_{n=1}^{\infty }) 
=l_{B}((\sum _{k=1}^{\infty }m_{n}(A_{k}))_{k=1}^{\infty })\\ 
= & \sum _{k=1}^{\infty }l_{B}((m_{n}(A_{k}))_{n=1}^{\infty })=\sum _{k=1}^{\infty }\mu (A_{k}).
\end{align*}
We are done. 
\end{proof}
Combining Lemmas ~\ref{l:dumeasure}, \ref{l:muxjfin}, \ref{l:mumcomp}, and  
 \ref{l:mujcam}, and (3) of Definition~\ref{d:mmmap}, we get the following.
\begin{lem}
\label{l:musfmeas}
$\mu $ is a $\sigma $-finite measure on $X$ equivalent to $m$. 
Moreover,  $\mu (Y_{j})\leq \mu (X_{j})<\infty $ and 
$0<\mu (X_{j})$ 
for all $j\geq 0.$ 
\end{lem}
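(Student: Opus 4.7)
The plan is to assemble four ingredients: countable additivity, finiteness, positivity, and mutual absolute continuity, each of which draws on one of the preceding lemmas.

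First, by Lemma~\ref{l:mujcam} each $\mu_j := \mu|_{Y_j}$ is a countably additive measure on $Y_j$. Combining this with Lemma~\ref{l:dumeasure} (applied to the disjoint decomposition $\{Y_j\}_{j\ge 0}$) immediately gives that the set function $A \mapsto \sum_{j=0}^{\infty} \mu(A \cap Y_j)$ is a genuine measure on $X$. Lemma~\ref{l:muxjfin} yields $\mu(Y_j) \le \mu(X_j) < \infty$, so the $Y_j$ provide a countable partition by finite-measure sets; $\sigma$-finiteness is then immediate provided that $\mu(X \setminus \bigcup_j Y_j) = 0$, which I defer to the absolute-continuity step below (it follows from $\mu \ll m$ and condition (2) of Definition~\ref{d:mmmap}).

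Next, I would deduce $\mu \ll m$ from Lemma~\ref{l:mumcomp}. If $m(A) = 0$ then $m(A \cap Y_j) = 0$ for every $j$. For $j$ with $\mu(Y_j) > 0$ the upper estimate in Lemma~\ref{l:mumcomp} forces $\mu_j(A \cap Y_j) = 0$; for the remaining $j$ one has $\mu_j(A \cap Y_j) \le \mu(Y_j) = 0$. Summation gives $\mu(A) = 0$.

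The main obstacle is positivity $\mu(X_j) > 0$, which is simultaneously needed for the reverse absolute continuity $m \ll \mu$ (to rule out the degenerate situation $m(Y_j) > 0$ with $\mu(Y_j) = 0$, under which Lemma~\ref{l:mumcomp} is silent). Here I would invoke condition (3) of Definition~\ref{d:mmmap} to pick $q \ge 0$ with $V := X_0 \cap T^{-q}(X_j)$ satisfying $m(V) > 0$. Since $V, X_0 \subset X_0$, condition~(4) applied inside $X_0$ yields
\[
m(T^{-n}(V)) \ge K_0^{-1}\,\frac{m(V)}{m(X_0)}\,m(T^{-n}(X_0)).
\]
Because $V \subset T^{-q}(X_j)$ we have $T^{-n}(V) \subset T^{-(n+q)}(X_j)$; summing, shifting the index by $q$, and using condition~(5) (to force the boundary correction in the ratio of partial sums to vanish) gives
\[
\liminf_{n \to \infty} m_n(X_j) \ge K_0^{-1}\,\frac{m(V)}{m(X_0)} > 0.
\]
Since any Banach limit dominates $\liminf$, this forces $\mu(X_j) \ge \liminf_n m_n(X_j) > 0$.

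Finally, for $m \ll \mu$: repeating the previous estimate but with condition~(4) applied to $A = Y_j$, $B = X_j$ inside $X_j$, one transfers the lower bound from $m_n(X_j)$ to $m_n(Y_j)$, so $\mu(Y_j) > 0$ whenever $m(Y_j) > 0$. The lower estimate in Lemma~\ref{l:mumcomp} then closes the loop: $\mu(A \cap Y_j) = 0$ forces $m(A \cap Y_j) = 0$ for every $j$ with $m(Y_j) > 0$, while the remaining $j$ contribute nothing; condition~(2) of Definition~\ref{d:mmmap} disposes of $X \setminus \bigcup_j Y_j$. This simultaneously confirms equivalence of $\mu$ and $m$ and completes the verification of $\sigma$-finiteness.
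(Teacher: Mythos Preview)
Your proof is correct and follows exactly the route the paper intends: the paper's own proof is a single sentence citing Lemmas~\ref{l:dumeasure}, \ref{l:muxjfin}, \ref{l:mumcomp}, \ref{l:mujcam} together with condition~(3) of Definition~\ref{d:mmmap}, and you have correctly unpacked how these ingredients combine (including the positivity step via conditions~(3)--(5), which the paper leaves entirely to the reader). The one identity you use tacitly, $\mu(X_j)=l_B\big((m_n(X_j))_{n}\big)$, is justified because $X_j\subset Y_0\cup\cdots\cup Y_j$ and the Banach limit is finitely additive.
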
 
\begin{lem}
\label{l:mueqpf}
The formula \ref{eq:muequ} holds. 
\end{lem}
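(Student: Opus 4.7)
The plan is to exploit finite additivity of each $m_n$ together with the linearity and continuity of the Banach limit $l_B$, splitting $A$ into a finite union of the cells $A\cap Y_j$ plus a tail, and then passing to the limit in the number of cells. Since $Y_0,Y_1,\ldots$ is a measurable partition of $X$ (by the very definition $Y_j=X_j\setminus\bigcup_{i<j}X_i$), we can write, for every fixed $l\ge 1$ and every $n\ge 1$,
\[
m_n(A)=\sum_{j=0}^{l-1}m_n(A\cap Y_j)+m_n\!\left(A\cap\bigcup_{j=l}^{\infty}Y_j\right),
\]
which is an identity of real numbers obtained by the countable additivity of each measure $m\circ T^{-k}$ followed by dividing through by $\sum_{k=0}^n m(T^{-k}(X_0))$.

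Next I would apply $l_B$ to both sides. Since all the sequences involved are dominated by $(m_n(A))_{n=1}^{\infty}\in\ell_\infty$ (by hypothesis), all the Banach limits are defined and finite, and by linearity of $l_B$ on finite sums I obtain
\[
l_B\bigl((m_n(A))_{n=1}^{\infty}\bigr)=\sum_{j=0}^{l-1}l_B\bigl((m_n(A\cap Y_j))_{n=1}^{\infty}\bigr)+l_B\!\left(\left(m_n\!\left(A\cap\bigcup_{j=l}^{\infty}Y_j\right)\right)_{n=0}^{\infty}\right).
\]
By the very definition of $\mu$ on subsets of a single $Y_j$, the first sum on the right is $\sum_{j=0}^{l-1}\mu(A\cap Y_j)$, while the definition of $\mu$ on a general set gives $\mu(A)=\sum_{j=0}^{\infty}\mu(A\cap Y_j)$.

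Finally I pass to the limit $l\to\infty$. Each partial sum $\sum_{j=0}^{l-1}l_B((m_n(A\cap Y_j))_{n=1}^{\infty})$ is bounded above by $l_B((m_n(A))_{n=1}^{\infty})<\infty$, so the series converges and its limit equals $\mu(A)$. Therefore the complementary term
\[
l_B\!\left(\left(m_n\!\left(A\cap\bigcup_{j=l}^{\infty}Y_j\right)\right)_{n=0}^{\infty}\right)
\]
must also converge as $l\to\infty$, and rearranging yields the claimed identity \eqref{eq:muequ}.

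The only delicate point is that the Banach limit is only finitely additive on $\ell_\infty$, not countably additive, so one genuinely needs to split at a finite level $l$ and then perform the numerical limit $l\to\infty$ outside the $l_B$; no interchange of $l_B$ with an infinite sum over $j$ is required. The boundedness hypothesis on $(m_n(A))_{n\ge 1}$ is precisely what guarantees that every sequence encountered in the above computation lies in $\ell_\infty$, so this is where that assumption is used.
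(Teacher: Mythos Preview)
Your proof is correct and follows essentially the same approach as the paper's own proof: split $A$ into finitely many cells $A\cap Y_j$ plus a tail, apply finite linearity of the Banach limit, identify the finite sum with a partial sum of $\mu(A)=\sum_j\mu(A\cap Y_j)$, and let $l\to\infty$. One tiny imprecision: the $Y_j$ partition $\bigcup_n X_n$ rather than all of $X$, but since $m(X\setminus\bigcup_n X_n)=0$ and $m$ is quasi-invariant, the identity $m_n(A)=\sum_{j<l}m_n(A\cap Y_j)+m_n(A\cap\bigcup_{j\ge l}Y_j)$ still holds, so nothing is affected.
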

\begin{proof}
Fix a measurable set $A\subset X.$ Then, for every $l\geq 1$ we have that 
\begin{align*} 
l_{B}((m_{n}(A))_{n=1}^{\infty })= 
& l_{B}\left(\sum _{j=0}^{l}(m_{n}(A\cap Y_{j}))_{n=1}^{\infty }\right)
+l_{B}\left( (m_{n}(\bigcup _{j=l+1}^{\infty }A\cap Y_{j}))_{n=1}^{\infty }\right) \\ 
& = \sum _{j=0}^{l}l_{B}((m_{n}(A\cap Y_{j}))_{n=1}^{\infty })
+l_{B}\left((m_{n}(A\cap \bigcup _{j=l+1}^{\infty }Y_{j}))_{n=1}^{\infty } \right).
\end{align*}
Hence, letting $l\rightarrow \infty $, we get 
\begin{align*}
l_{B}((m_{n}(A))_{n=1}^{\infty })= 
& \sum _{j=0}^{\infty }l_{B}((m_{n}(A\cap Y_{j}))_{n=1}^{\infty })
+\lim _{l\rightarrow \infty }l_{B}\left( (m_{n}(A\cap \bigcup _{j=l+1}^{\infty }Y_{j}))_{n=1}^{\infty }\right) \\ 
& = \mu (A)+\lim _{l\rightarrow \infty }l_{B}\left( (m_{n}(A\cap \bigcup _{j=l}^{\infty }Y_{j}))_{n=1}^{\infty }\right) . 
\end{align*}
We are done. 
\end{proof}
\begin{lem}
\label{l:mutinv}
The $\sigma$-finite measure $\mu $ is $T$-invariant. 
\end{lem}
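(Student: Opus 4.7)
The plan is to reduce $T$-invariance to sets contained in some $X_j$ and then exploit an algebraic telescoping identity together with the shift-invariance of the Banach limit. Since the sets $T^{-1}(A\cap Y_j)$, $j\ge 0$, are pairwise disjoint and since $\mu$ is countably additive (Lemma~\ref{l:dumeasure}),
\[
\mu(T^{-1}(A))\;=\;\sum_{j\ge 0}\mu\bigl(T^{-1}(A\cap Y_j)\bigr)\quad\text{and}\quad \mu(A)\;=\;\sum_{j\ge 0}\mu(A\cap Y_j),
\]
so it will suffice to establish $\mu(T^{-1}(B))=\mu(B)$ for every measurable $B\subset X_j$.

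For the key computation, writing $S_n:=\sum_{k=0}^n m(T^{-k}(X_0))$, I would use the telescoping identity
\[
m_n(T^{-1}(B))\;=\;m_{n+1}(B)\cdot\frac{S_{n+1}}{S_n}\;-\;\frac{m(B)}{S_n}.
\]
Hypothesis (5) gives $S_n\to\infty$, and $m(T^{-(n+1)}(X_0))\le 1$ forces $S_{n+1}/S_n\to 1$ and $m(B)/S_n\to 0$; the sequence $(m_{n+1}(B))_{n\ge 1}$ is bounded by Lemma~\ref{l:muxjfin} via property (4), so the continuity of $l_B$ on $l_\infty$ combined with its shift-invariance yields $l_B(m_n(T^{-1}(B)))=l_B(m_{n+1}(B))=l_B(m_n(B))$. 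This in turn shows $(m_n(T^{-1}(B)))$ is itself bounded, so formula (\ref{eq:muequ}) is applicable to $T^{-1}(B)$.

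To conclude $\mu(T^{-1}(B))=\mu(B)$ by means of (\ref{eq:muequ}), it remains to verify that the tail correction
\[
\lim_{l\to\infty}\,l_B\Bigl(\bigl(m_n\bigl(T^{-1}(B)\cap\textstyle\bigcup_{j\ge l}Y_j\bigr)\bigr)_{n=1}^\infty\Bigr)\;=\;0.
\]
This is where property (6) enters in an essential way: setting $E_l:=T(\bigcup_{j\ge l}Y_j)$, any point $x\in T^{-1}(B)\cap\bigcup_{j\ge l}Y_j$ satisfies $T(x)\in B\cap E_l$, so the tail is contained in $T^{-1}(B\cap E_l)$. The step carried out in the previous paragraph, now applied to $B\cap E_l$ in place of $B$, bounds the Banach limit of $m_n$ on the tail by $l_B(m_n(B\cap E_l))$, and property (4) together with Lemma~\ref{l:muxjfin} bounds this in turn by a constant multiple of $m(B\cap E_l)$. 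Since $m(E_l)\to 0$ by (6), the tail vanishes.

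The main obstacle will be precisely this tail estimate: without hypothesis (6) the quantities $l_B(m_n(T^{-1}(B)))$ and $\mu(T^{-1}(B))$ need not coincide, since iterating $T^{-1}$ can redistribute mass onto arbitrarily high blocks $Y_j$. This is exactly why (6) takes the place of the $\sigma$-compactness assumption in the original Marco-Martens argument, where weak convergence of measures automatically controls the escape of mass to infinity; in the present abstract measurable setting this must be imposed as a separate hypothesis, which is why Remark~\ref{r:mmmapf1} emphasizes that it is free of charge whenever $T$ is finite-to-one (the case that will arise in our application to the skew product $\tf$).
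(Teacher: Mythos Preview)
Your proposal is correct and follows essentially the same route as the paper: both arguments reduce to sets $B$ contained in a single block, use the inclusion $T^{-1}(B)\cap\bigcup_{j\ge l}Y_j\subset T^{-1}(B\cap E_l)$ together with property~(4) and hypothesis~(6) to kill the tail term in formula~(\ref{eq:muequ}), and then identify $l_B((m_n(T^{-1}(B))))$ with $l_B((m_n(B)))$. Your write-up is in fact more explicit than the paper's at the last step, spelling out the telescoping identity $m_n(T^{-1}(B))=m_{n+1}(B)\,S_{n+1}/S_n-m(B)/S_n$ and the appeal to shift-invariance of the Banach limit, which the paper compresses into a single equality.
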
 
\begin{proof}
Let $i\geq 0$ be such that $m(Y_{i})>0$. Fix a measurable set $A\subset Y_{i}.$ 
Fix $l\geq 1.$ We then have 
\begin{align*}
m_{n}(T^{-1}(A)\cap \bigcup _{j=l}^{\infty }Y_{j})
= & \frac{\sum _{k=0}^{n}m(T^{-k}(T^{-1}(A)\cap \bigcup _{j=l}^{\infty }Y_{j}))}
{\sum _{k=0}^{n}m(T^{-k}(Y_{0}))}\\ 
& \leq  \frac{\sum _{k=0}^{n}m(T^{-(k+1)}(A\cap T(\bigcup _{j=l}^{\infty }Y_{j})))}
{\sum _{k=0}^{n}m(T^{-k}(Y_{0}))}\\ 
& \leq m_{n+1}(A\cap T(\bigcup _{j=l}^{\infty }Y_{j}))\cdot 
\frac{\sum _{k=0}^{n+1}m(T^{-k}(Y_{0}))}{\sum _{k=0}^{n}m(T^{-k}(Y_{0}))}\\ 
& \leq K_{i}\frac{m_{n+1}(Y_{i})}{m(Y_{i})}\cdot 
m(A\cap T(\bigcup _{j=l}^{\infty }Y_{j}))\cdot 
\frac{\sum _{k=0}^{n+1}m(T^{-k}(Y_{0}))}{\sum _{k=0}^{n}m(T^{-k}(Y_{0}))}, 
\end{align*}
where the last inequality sign was written because of (4) of Definition~\ref{d:mmmap} and
 since $A\subset Y_{i}.$ Since, the limit when $n\rightarrow \infty $ at last quotient is $1$, 
 we get that 
$$ l_{B}\left((m_{n}(T^{-1}(A)\cap \bigcup _{j=l}^{\infty }Y_{j}))_{n=1}^{\infty }\right) 
\leq \frac{K_{i}\mu (Y_{i})}{m(Y_{i})} m(T(\bigcup _{j=l}^{\infty }Y_{j})).$$
Hence, in virtue of (6) of Definition~\ref{d:mmmap}, 
$$\lim _{l\rightarrow \infty }
l_{B}\left((m_{n}(T^{-1}(A)\cap \bigcup _{j=l}^{\infty }Y_{j}))_{n=1}^{\infty } \right) 
\leq \frac{K_{i}\mu (Y_{i})}{m(Y_{i})}\lim _{l\rightarrow \infty }m(T(\bigcup _{j=l}^{\infty }Y_{j}))=0.$$
Thus, it follows from Lemma~\ref{l:mueqpf}, and as $A\subset Y_{i}$, that 
$$\mu (T^{-1}(A))=l_{B}((m_{n}(T^{-1}(A)))_{n=1}^{\infty })=l_{B}((m_{n}(A))_{n=1}^{\infty })
=\mu (A).$$ 
For an arbitrary $A\subset X$, write $A=\bigcup _{j=0}^{\infty }A\cap Y_{j}$ and 
observe that 
$$\mu (T^{-1}(A))=\mu (\bigcup _{j=0}^{\infty }T^{-1}(A\cap Y_{j}))
=\sum _{j=0}^{\infty }\mu (T^{-1}(A\cap Y_{j}))=
\sum _{j=0}^{\infty }\mu (A\cap Y_{j})=\mu (A).$$
We are done. 
\end{proof}

We now give the proof of Theorem~\ref{t1h75}.

\ 

\noindent {\bf Proof of Theorem~\ref{t1h75}:} 
Combining Lemmas~ \ref{l:muxjfin}, \ref{l:musfmeas}, 
\ref{l:mueqpf}, and \ref{l:mutinv}, 
we obtain the statement of Theorem~\ref{t1h75}. 
We are done. 
\qed 

\

\fr Applying Theorem~\ref{t1h75} we shall prove Theorem~\ref{t4h65}.

\

\fr{\bf Proof of Theorem~\ref{t4h65}.} Since the topological support of $\mh$ is equal to
the Julia set $J(\tf)$ and since, by Lemma~\ref{l1h63}, $\PCV(\tf)$ is a nowhere dense 
subset of $J(\tf)$, we have $\mh(\PCV(\tf))<1$. Since the set $\PCV(\tf)$ is forward invariant
under $\tf$, it follows from ergodicity and conservativity of $\mh$ (see Lemma~\ref{l2h73})
that $\mh(\PCV(\tf))=0$. Therefore, in virtue of Lemma~\ref{l1h68}
\beq\label{1h81}
\mh(p_2^{-1}(p_2(\PCV(\tf))))=0.
\eeq
Now, for every $z\in J(G)\sms p_2(\PCV(\tf))$ take $r_z>0$ such that
$J(G)\cap B(z,2r_z)\sbt \oc 
\sms p_2(\PCV(\tf))$. Since $J(G)\sms p_2(\PCV(\tf))$ is a separable
metric space, Lindel\"of's Theorem 
yields the existence of a countable set $\{z_j\}_{j=0}^\infty\sbt
J(G)\sms p_2(\PCV(\tf))$ such that 
$$
\bu_{j=0}^\infty B(z_j,r_{z_{j}})\spt J(G)\sms p_2(\PCV(\tf)).
$$
Set 
$
A_j:=p_2^{-1}(B(z_j,r_j)).
$
Verifying the conditions of 
Definition~\ref{d:mmmap} 
(with $X=J(\tf), T=\tf, m=\mh$, $X_{j}=A_{j}$), 
$\tf $ is nonsingular because of Corollary~\ref{c3h61} and $h$-conformality of $\mh.$   
We immediately see that condition (1) is
satisfied, that (2) holds because of (\ref{1h81}), and that (3) holds
because of $h$-conformality 
of $\mh$ and topological exactness of the map $\tf:J(\tf)\to
J(\tf)$. Condition (5) follows  
directly from ergodicity and conservativity of the measure
$\mh$. Condition (6) follows since  
$\tilde{f}: J(\tilde{f})\rightarrow J(\tilde{f})$ is finite-to-one
(see Remark~\ref{r:mmmapf1}).   
Let us prove condition (4).
Fix $j\ge 1$ and two arbitrary Borel sets $A,B\sbt A_j$ with
$\mh(A),\mh(B)>0$. Since $B(z_j,2r_{z_j}) 
\cap p_2(\PCV(\tf))=\es$, for all $n\ge 0$ all continuous inverse branches
$$
\{\tf_*^{-n}:p_2^{-1}\(B(z_j,2r_{z_j})\)\to\Sg_u\times\oc\}_{*\in I_n}
$$
of $\tilde{f}^{n}$ 
are well-defined, where $I_{n}=\{ 1,\ldots ,u\} ^{n}$, 
and because of Koebe's Distortion Theorem and $h$-conformality of the measure 
$\mh$, we have
$$
\aligned
\mh\circ \tf^{-n}(A)
&=   \mh\lt(\bu_{*\in I_n}\tf_*^{-n}(A)\rt)
 =   \sum_{*\in I_n}\mh\(\tf_*^{-n}(A)\) \\
&\le \sum_{*\in I_n}K^h|(\tilde{f}_*^{-n})'(\tau,z_j)|^h\mh(A) \\
&=   K^{2h}{\mh(A)\over \mh(B)}\sum_{*\in I_n}K^{-h}|(\tilde{f}_*^{-n})'(\tau,z_j)|^h\mh(B) \\
&\le K^{2h}{\mh(A)\over \mh(B)}\sum_{*\in I_n}\mh\(\tf_*^{-n}(B)\) \\
&=   K^{2h}{\mh(A)\over \mh(B)}\mh\lt(\bu_{*\in I_n}\tf_*^{-n}(B)\rt) \\
&=   K^{2h}\mh\circ \tf^{-n}(B){\mh(A)\over \mh(B)},
\endaligned
$$
where $\tau$ is an arbitrary element of $\Sg_u$. Hence,
$$
{\mh\circ \tf^{-n}(A)\over\mh\circ \tf^{-n}(B)}
\le K^{2h}{\mh(A)\over \mh(B)},
$$
and consequently, condition (4) of Definition~\ref{d:mmmap} 
is satisfied. Therefore, Theorem~\ref{t1h75} produces 
a Borel $\sg$-finite $\tf$-invariant measure $\mu$ on $J(\tf)$, equivalent to $\mh$. 

Now, let us show that the measure $\mu$ is finite. Indeed, 
by Theorem~\ref{t1h4}, there exists a $\delta >0$ such that 
for all $g\in G^{\ast }$ and for all $x\in J(G)$, 
every connected component $W$ of $g^{-1}(B(x,\delta ))$ satisfies 
that $\mbox{diam}(W)<\gamma $ and that $W$ is simply connected.  
Cover $p_2(\PCV(\tf))$ with finitely many open balls $\{B(z,\d):z\in F\}$, 
where $F$ is some finite subset of $p_2(\PCV(\tf))$. 
for all $j\ge 1$. Since $J(G)\sms\bu_{z\in F}B(z,\d)$ is covered by finitely many balls $B(z_j,r_{z_j})$,
$j\ge 1$, it therefore suffices to show that $\mu(p_{2}^{-1}(B(z,\d)))<+\infty$ for all $z\in F$. So,
fix $z\in F$. Since $z\in p_2(\PCV(\tf))$, there thus exists $k\ge 1$ such that 
$B(z_k,r_{z_k})\sbt B(z,\d)$. 
By Lemma~\ref{l:muxjfin} and the formula (\ref{eq:muequ}) of Theorem~\ref{t1h75}, 
it therefore suffices to show that
\beq\label{2h85}
\limsup_{n\to\infty}{\mh\(\tf^{-n}(p_2^{-1}(B(z,\d)))\)\over \mh(\tf^{-n}(A_k))}
<+\infty.
\eeq
In order to do this let for every $\tau\in\{1,2,\ld,s\}^n$, the symbol $\Ga_\tau$ denote the 
collection of all connected components of $f_\tau^{-1}(B(z,\d))$. It follows from
Theorem~\ref{t1h61}, Lemma~\ref{lku2.19h9} and \cite[Corollary 1.9]{hiroki2} 
that for every $V\in \Ga_\tau$, we have
\beq\label{1h85}
\aligned
\mh([\tau]\times V)
&\le m_h(V)
\le C\diam^h(V)
\le C\Ga^{-h}\lt({\diam(B(z,\d))\over \diam(B(z_k,r_{z_k}))}\rt)^h\diam^h(V_k) \\
&=  C(\d r_{z_k}^{-1}\Ga^{-1})^h\diam^h(V_k),
\endaligned
\eeq
where $C>0$ is a constant independent of $n$ and $\tau $, 
$V_k$ is a connected component of $f_\tau^{-1}(B(z_k,r_{z_k}))$ contained in $V$, 
and $\Gamma $ is the constant in Lemma~\ref{lku2.19h9}. But,
from conformality of the measure $\mh$ and from the fact that $V_k=f_{\tau*}^{-1}(B(z_k,r_{z_k}))$,
where $f_{\tau*}^{-1}:B(z_k,2r_{z_k})\to\oc$ is an analytic inverse branch of $f_\tau$, we 
see that
$$
\aligned
\mh([\tau]\times  V_k)
&\ge K^{-h}|(f_{\tau*}^{-1})'(z_k)|^h\mh(A_k)
 \ge K^{-h}\lt(K^{-1}{\diam(V_k)\over 2r_{z_k}}\rt)^h\mh(A_k) \\
&=   (2K^2r_{z_k})^{-h}\diam^h(V_k)\mh(A_k).
\endaligned
$$
Combining this with (\ref{1h85}) we get that
$$
\mh([\tau]\times V)
\le C(2K^2\d\Ga^{-1})^h(\mh(A_k))^{-1}\mh([\tau]\times V_k)
$$
Therefore,
$$
\aligned
\mh\(\tf^{-n}(p_2^{-1}(B(z,\d)))\)
&=   \sum_{|\tau|=n}\sum_{V\in\Ga_\tau}\mh([\tau]\times V) \\
&\le C(2K^2\d\Ga^{-1})^h(\mh(A_k))^{-1}\sum_{|\tau|=n}\sum_{V\in\Ga_\tau}\mh([\tau]\times V_k) \\
&\le C(2K^2\d\Ga^{-1})^h(\mh(A_k))^{-1}\mh\(\tf^{-n}(A_k)\).
\endaligned
$$
Thus, the upper limit in (\ref{2h85}) is bounded above by $C(2K^2\d\Ga^{-1})^h(\mh(A_k))^{-1}
<+\infty$, and finiteness of the measure $\mu$ is proved. 

Dividing $\mu$ by $\mu(J(\tf))$, we may assume without loss of generality that $\mu$ is a 
probability measure. Since for every Borel set $F\sbt J(\tf)$ the sequence $(\mu(\tf^n(F)))_{n=1}^\infty$
is (weakly) increasing, the metric exactness of $\mu$ follows from weak metrical exactness
of $\mh$ (Lemma~\ref{l2h73}) and the fact that $\mu$ and $\mh$ are equivalent. Since, by
metrical exactness, $\mu$ is ergodic, it is a unique Borel probability measure absolutely
continuous with respect to $\tilde{m}_{h}$. The proof is complete. \endpf

\section{Examples}
\label{s:Ex}

In this section, we give some examples of semi-hyperbolic rational semigroups 
with nice open set condition. 

\begin{ex}[\cite{hiroki2, hiroki4}]
\label{oscshex1}
Let $f_{1}(z)=z^{2}+2$, $f_{2}(z)=z^{2}-2$, and $f=(f_{1},f_{2}).$  
Let $G=\langle f_{1},f_{2}\rangle.$  Moreover, 
let $U:=\{ z\in \C \mid |z|<2.\} .$ 
Then, $G$ is semi-hyperbolic but not hyperbolic (\cite[Example 5.8]{hiroki2}). 
Moreover, $G$ satisfies the nice open set condition with $U.$ Since $J(G)\subset f_{1}^{-1}(\ov{U})\cup f_{2}^{-1}(\ov{U})
\subsetneqq \ov{U}$, \cite[Theorem 1.25]{hiroki4} implies that 
$J(G)$ is porous and $\HD(J(G))<2.$ 
Moreover, by Theorem~\ref{Theorem A}, 
we have $h(f)=\HD(J(G))=\PD(J(G))=\BD(J(G)).$ 
Furthermore, 
$f_{1}^{-1}(\ov{U})\cap f_{2}^{-1}(\ov{U})\neq \emptyset .$ 
See figure~\ref{fig:z2+2z2-2-1}. 
\begin{figure}[htbp]
\caption{The Julia set of 
$\langle f_{1},f_{2}\rangle$, 
where $f_{1}(z)=z^{2}+2, f_{2}(z)=z^{2}-2$.}
\includegraphics[width=3cm,width=3cm]{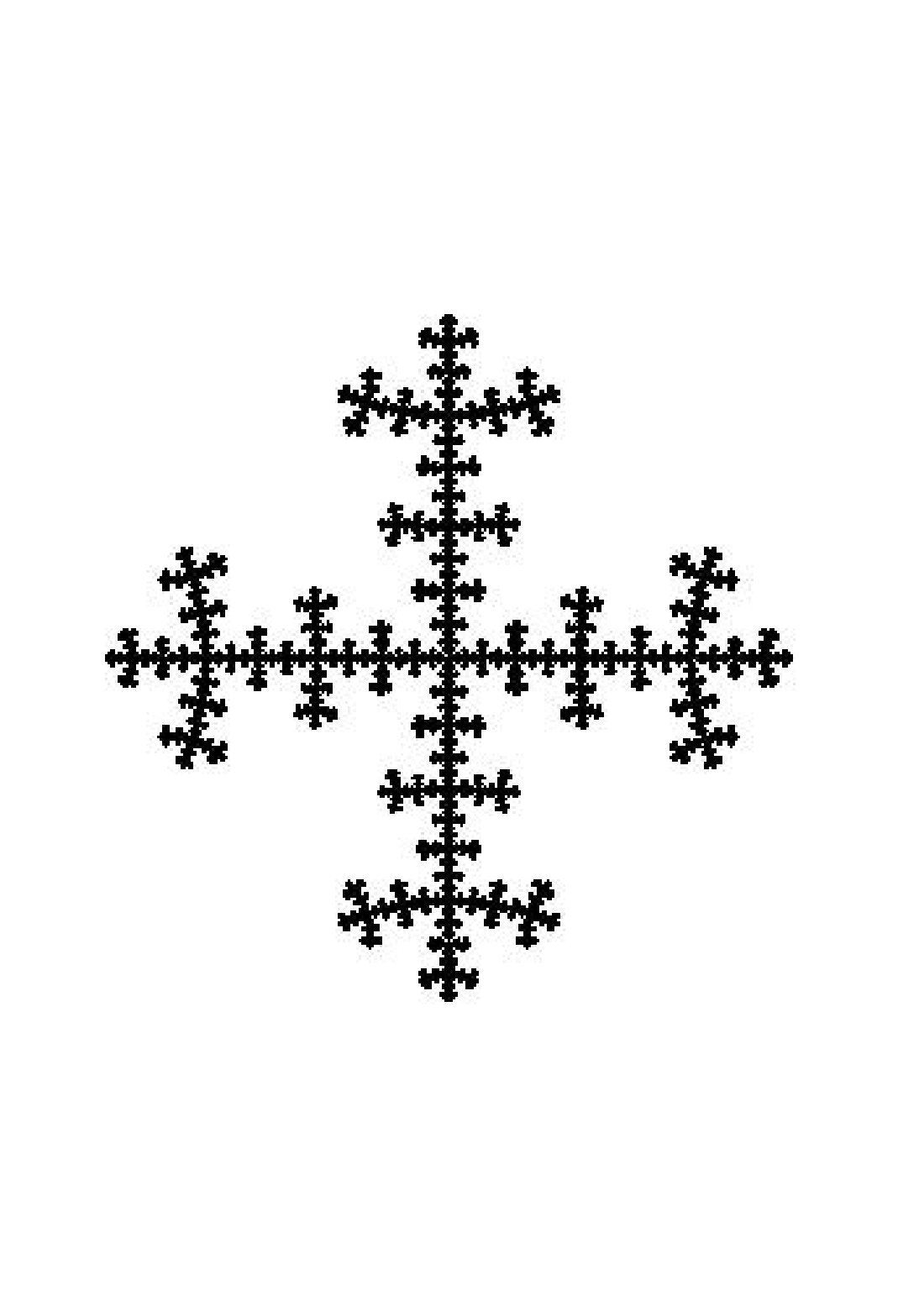}
\label{fig:z2+2z2-2-1}
\end{figure}

\end{ex}
\begin{prop} 
\label{semihyposcexprop}
(See \cite{SdpbpI, hiroki5}) 
Let $f_{1}$ be a semi-hyperbolic polynomial 
with $\deg (f_{1})\geq 2$ such that 
$J(f_{1})$ is connected. 
Let $K(f_{1})$ be the filled-in Julia set of $f_{1}$ and 
suppose that {\em int}$K(f_{1})$ is not empty. 
Let $b\in \mbox{{\em int}}K(f_{1})$ be a point. 
Let $d$ be a positive integer such that 
$d\geq 2.$ Suppose that $(\deg (f_{1}),d)\neq (2,2).$ 
Then, there exists a number $c>0$ such that 
for each $\l \in \{ \l\in \Bbb{C}: 0<|\l |<c\} $, 
setting $f_{\l }=(f_{\l ,1},f_{\l ,2})=
(f_{1},\l (z-b)^{d}+b )$ and $G_{\l }:= \langle f_{1},f_{\l, 2}\rangle $, 
 we have that 
$G_{\l }$ is semi-hyperbolic and   
$f_{\l }$ satisfies the nice open set condition with 
an open set $U_{\l }$, 
$J(G_{\l } )$ is porous,  
$\HD(J(G_{\l }))=h(f_{\l })<2$, and $P(G_{\l })
\setminus \{ \infty \} $ is bounded in $\Bbb{C}.$  
\end{prop}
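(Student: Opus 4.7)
\fr\emph{Proof plan.} The strategy is to exploit that as $\lambda\to 0$, the map $f_{\lambda,2}(z)=\lambda(z-b)^d+b$ contracts every bounded region into a small disk around $b\in\mathrm{int}\,K(f_{1})$, so $G_\lambda$ may be treated as a perturbation of $\langle f_1\rangle$. Concretely, fix $r_0>0$ with $\overline{B(b,2r_0)}\sbt\mathrm{int}\,K(f_1)$, and choose $R>0$ large enough that $K(f_1)\sbt\overline{B(0,R)}$ contains all finite critical values of $f_1$ and $f_1(\overline{B(0,R)})\sbt\overline{B(0,R)}$ (possible because $f_1$ is a polynomial with bounded Julia set). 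For $|\lambda|$ sufficiently small one has $f_{\lambda,2}(\overline{B(0,R)})\sbt B(b,r_0)\sbt\overline{B(0,R)}$, so $\overline{B(0,R)}$ is forward-invariant under both generators and contains all their finite critical values; hence $P(G_\lambda)\sms\{\infty\}\sbt\overline{B(0,R)}$ is bounded in $\C$.

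Next I would verify semi-hyperbolicity via Theorem~\ref{t2.9h3}. The only nontrivial check is condition (b): the sole finite critical point of $f_{\lambda,2}$ is $b$, and since $b\in\mathrm{int}\,K(f_1)\sbt F(f_1)$, a normal-family argument (using that $f_{\lambda,2}$ contracts into $B(b,r_0)$) shows $b\in F(G_\lambda)$ for small $|\lambda|$, so $b\notin J(G_\lambda)$ and (b) is vacuous at $b$. For a critical point $c$ of $f_1$ lying in $J(G_\lambda)\cap\C$, any $G_\lambda^{\ast}$-orbit of $f_1(c)$ either stays in the $f_1$-orbit of $f_1(c)$ (bounded away from $c$ by semi-hyperbolicity of $f_1$) or is pushed into $B(b,r_0)$ after some insertion of $f_{\lambda,2}$ (also bounded away from $c$ by the choice of $r_0$). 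Condition (a) follows from the analogous perturbative argument applied to the family of inverse branches.

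The heart of the proof --- and the main obstacle --- is the construction of the nice open set $U_\lambda$. The exploitable asymmetry is that $f_1^{-1}(\overline{B(0,R)})\sbt\overline{B(0,R)}$, whereas $f_{\lambda,2}^{-1}(\overline{B(0,R)})$ is approximately the enormous disk $\{|z-b|<(R/|\lambda|)^{1/d}\}$ centered at $b$. Following \cite{SdpbpI,hiroki5}, I would take $U_\lambda$ to be a bounded open subset of the basin of $\infty$ for $G_\lambda$, shaped so that $f_1^{-1}(U_\lambda)$ concentrates near a neighborhood of $J(f_1)$, while $f_{\lambda,2}^{-1}(U_\lambda)$ occupies the distant annular region whose inner radius is of order $|\lambda|^{-1/d}$; the two preimages then sit in well-separated parts of $U_\lambda$, yielding (osc1) and (osc2). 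The degree restriction $(\deg(f_1),d)\neq(2,2)$ is what guarantees that the combined branching leaves enough room for this disjoint nesting inside $U_\lambda$. Condition (osc3) is automatic once $U_\lambda$ is taken piecewise smooth, for instance a John domain (cf.\ Remark~\ref{r:osc3}).

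Finally, porosity of $J(G_\lambda)$ and the strict inequality $\HD(J(G_\lambda))<2$ follow from \cite[Theorem 1.25]{hiroki4} once semi-hyperbolicity and the nice open set condition are in hand, and the equalities $h(f_\lambda)=\HD(J(G_\lambda))=\PD(J(G_\lambda))=\BD(J(G_\lambda))$ are a direct application of Theorem~\ref{Theorem A}(e). The delicate point throughout is verifying the precise geometric compatibility of the preimages required by (osc2), which is where the degree constraint enters.
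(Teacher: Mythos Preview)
Your overall plan tracks the paper's argument, but the decisive step---constructing $U_\lambda$ and verifying (osc2)---is not actually carried out, and your informal description is off in ways that matter. The paper's explicit choice (after normalizing $b=0$) is $U_\lambda=\mathrm{int}\,K(f_{\lambda,2})\setminus K(f_1)$. With this $U_\lambda$, the set $f_1^{-1}(U_\lambda)$ does \emph{not} ``concentrate near $J(f_1)$'' as you write: since $U_\lambda$ reaches out to radius $|\lambda|^{-1/(d-1)}$, its $f_1$-preimage reaches out to radius of order $|\lambda|^{-1/((d-1)d_1)}$ (with $d_1=\deg f_1$). On the other hand $f_{\lambda,2}^{-1}(U_\lambda)$ lies \emph{outside} the circle of radius $(r/|\lambda|)^{1/d}$, where $\overline{B(0,r)}\subset\mathrm{int}\,K(f_1)$. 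Disjointness (osc2) therefore reduces to the comparison $|\lambda|^{-1/d}\gg |\lambda|^{-1/((d-1)d_1)}$ as $|\lambda|\to 0$, i.e.\ $\tfrac{1}{d}>\tfrac{1}{(d-1)d_1}$, which for $d,d_1\ge 2$ holds precisely when $(d_1,d)\neq(2,2)$. This is the exact role of the degree hypothesis, not a vague ``enough room'' consideration.

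A second gap concerns (osc3): you cannot simply arrange it by ``taking $U_\lambda$ piecewise smooth'', because the inner boundary of $U_\lambda$ is $J(f_1)$, typically fractal. The paper uses the Carleson--Jones--Yoccoz theorem \cite{CJY} that $\hat\C\setminus K(f_1)$ is a John domain whenever $f_1$ is semi-hyperbolic, which yields (osc3) via Remark~\ref{r:osc3}. Finally, the paper verifies semi-hyperbolicity of $G_\lambda$ directly from Definition~\ref{d1h3}, using $P(G_\lambda)\setminus\{\infty\}\subset K(f_1)$ together with semi-hyperbolicity of $f_1$; this is more efficient than your route through Theorem~\ref{t2.9h3}, whose condition (a) you merely assert without argument.
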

{\sl Proof.} 
We will follow the argument in \cite{SdpbpI, hiroki5}. 
Conjugating $f_{1}$ by a M\"{o}bius transformation, 
we may assume that $b=0$ and the coefficient 
of the highest degree term of $f_{1}$ is equal to $1.$ 

Let $r>0$ be a 
 number such that $\overline{B(0,r)}\subset \mbox{int}K(f_{1}).$ 
We set $d_{1}:=\deg (f_{1}).$ 
 Let $\alpha >0$ be a number. 
Since $d\geq 2$ and $(d,d_{1})\neq (2,2)$, 
it is easy to see that 
$(\frac{r}{\alpha })^{\frac{1}{d}}>
2\left(2(\frac{1}{\alpha })
^{\frac{1}{d-1}}\right)^{\frac{1}{d_{1}}}
$ if and only if 
\begin{equation}
\label{Contproppfeq1}
\log \alpha <
\frac{d(d-1)d_{1}}{d+d_{1}-d_{1}d}
( \log 2-\frac{1}{d_{1}}\log \frac{1}{2}-\frac{1}{d}\log r) .
\end{equation} 
We set 
\begin{equation}
\label{Contproppfeq2}
c_{0}:=\exp \left(\frac{d(d-1)d_{1}}{d+d_{1}-d_{1}d}
( \log 2-\frac{1}{d_{1}}\log \frac{1}{2}-\frac{1}{d}\log r) \right)
\in (0,\infty ).
\end{equation}
 
Let $0<c<c_{0}$ be a small number and let $\l \in \Bbb{C} $ 
be a number with $0<|\l |<c.$ 
Put $f_{\l ,2}(z)=\l z^{d}.$ 
Then, we obtain $K(f_{\l ,2})=\{ z\in \Bbb{C} \mid 
|z|\leq (\frac{1}{|\l |})^{\frac{1}{d-1}}\} $ and 
$$f_{\l,2}^{-1}(\{ z\in \Bbb{C} \mid  |z|=r\} )=
\{ z\in \Bbb{C} \mid |z|=(\frac{r}{|\l |})^{\frac{1}{d}}\} .$$
Let 
$D_{\l }:=\overline{B(0,2(\frac{1}{|\l |})^{\frac{1}{d-1}})}.$ 
Since $f_{1}(z)=z^{d_{1}}(1+o(1))\ (z\rightarrow \infty )$,  
it follows that if $c$ is small enough, then 
for any $\l \in \Bbb{C} $ with $0<|\l |<c$, 
$$f_{1}^{-1}(D_{\l })\subset 
\left\{ z\in \Bbb{C} \mid 
|z|\leq 2\left( 2(\frac{1}{|\l |})^{\frac{1}{d-1}}\right) 
^{\frac{1}{d_{1}}}\right\} .$$  
This implies that  
\begin{equation}
\label{Contproppfeq3}
f_{1}^{-1}(D_{\l })\subset f_{\l ,2}^{-1}(\{ z\in \Bbb{C} \mid |z|<r\} ).
\end{equation} 
Hence, 
setting $U_{\l }:=\mbox{int}K(f_{\l ,2})\setminus K(f_{1})$, 
$f_{1}^{-1}(U_{\l })\cup f_{\l ,2}^{-1}(U_{\l })\subset U_{\l }$ and 
$f_{1}^{-1}(U_{\l })\cap f_{\l, 2}^{-1}(U_{\l })=\emptyset $. 
Furthermore, since $f_{1}$ is semi-hyperbolic, 
$\oc \setminus K(f_{1})$ is a John domain (see \cite{CJY}). 
Hence, $U_{\l }$ satisfies (osc3).  
Therefore, $G_{\l }$ satisfies the nice open set condition 
with $U_{\l }.$ 
We have $J(G_{\l })\subset \overline{U_{\l }}
\subset K(f_{\l ,2})\setminus \mbox{int}K(f_{1}). $ 
In particular, int$K(f_{1})\cup (\oc \setminus K(f_{\l ,2}))\subset 
F(G_{\l }).$ 
Furthermore, (\ref{Contproppfeq3}) implies that 
$f_{\l ,2}(K(f_{1}))\subset \mbox{int}K(f_{1}).$ 
Thus, 
we have $P(G_{\l })\setminus \{ \infty \} 
= \cup _{g\in G_{\l }^{\ast }}
g(CV^{\ast }(f_{1})\cup CV^{\ast }(f_{\l, 2}))
\subset 
K(f_{1})$, 
where $CV^{\ast }(\cdot )$ denotes the set of 
all critical values in $\Bbb{C}.$ Hence, 
$P(G_{\l })\setminus \{ \infty \} $ is bounded in $\Bbb{C}.$ 
Since $f_{1}$ is semi-hyperbolic, 
there exist an $N\in \N $ and a $\delta _{1}>0$ such that 
for each $x\in J(f_{1})$ and for each $n\in \N $, 
$\deg (f_{1}^{n}:V\rightarrow B(x,\delta _{1}))\leq N$ for each 
connected component $V$ of $f_{1}^{-n}(B(x,\delta _{1}).$ 
Moreover, $f_{\l, 2}^{-1}(J(f_{1}))\cap K(h_{1})=\emptyset $ and 
so $f_{\l ,2}^{-1}(J(f_{1}))\subset \oc \setminus P(G_{\l }). $ 
From these arguments, 
it follows that there exists a $0<\delta _{2} (<\delta _{1})$ such that 
for each $x\in J(f_{1})$ and each $g\in G_{\l }$, 
$\deg (g:V\rightarrow B(x,\delta _{2}))\leq N$ for each connected component $V$ of 
$g^{-1}(B(x,\delta _{2})).$ 
Since $P(G_{\l })\setminus \{ \infty \} \subset K(f_{1})$ again, 
we obtain that there exists a $0<\delta _{3} (<\delta _{2} )$ such that 
for each $x\in J(G_{\l })$ and each $g\in G_{\l }$, 
$\deg (g:V\rightarrow B(x,\delta _{3}))\leq N$ for each connected component 
$V$ of $g^{-1}(B(x,\delta _{3})).$ Thus, $G_{\l }$ is semi-hyperbolic. 
Since $J(G_{\l })\subset f_{1}^{-1}(\ov{U_{\l }})\cup f_{\l ,2}^{-1}(\ov{U_{\l }})
\subsetneqq \ov{U_{\l }}$, \cite{hiroki4} implies that $J(G_{\l })$ is porous and 
$\HD (J(G_{\l }))<2.$ Moreover, by Theorem~\ref{Theorem A}, we have 
$h(f_{\l })=\HD(J(G_{\l })).$ 
We are done. 
\qed 
\begin{figure}
\caption{The Julia set of 
$\langle f_{1}^{2},f_{2}^{2}\rangle$, 
where $f_{1}(z)=z^{2}-1, f_{2}(z)=z^{2}/4$.}
\includegraphics[width=3cm,width=3cm]{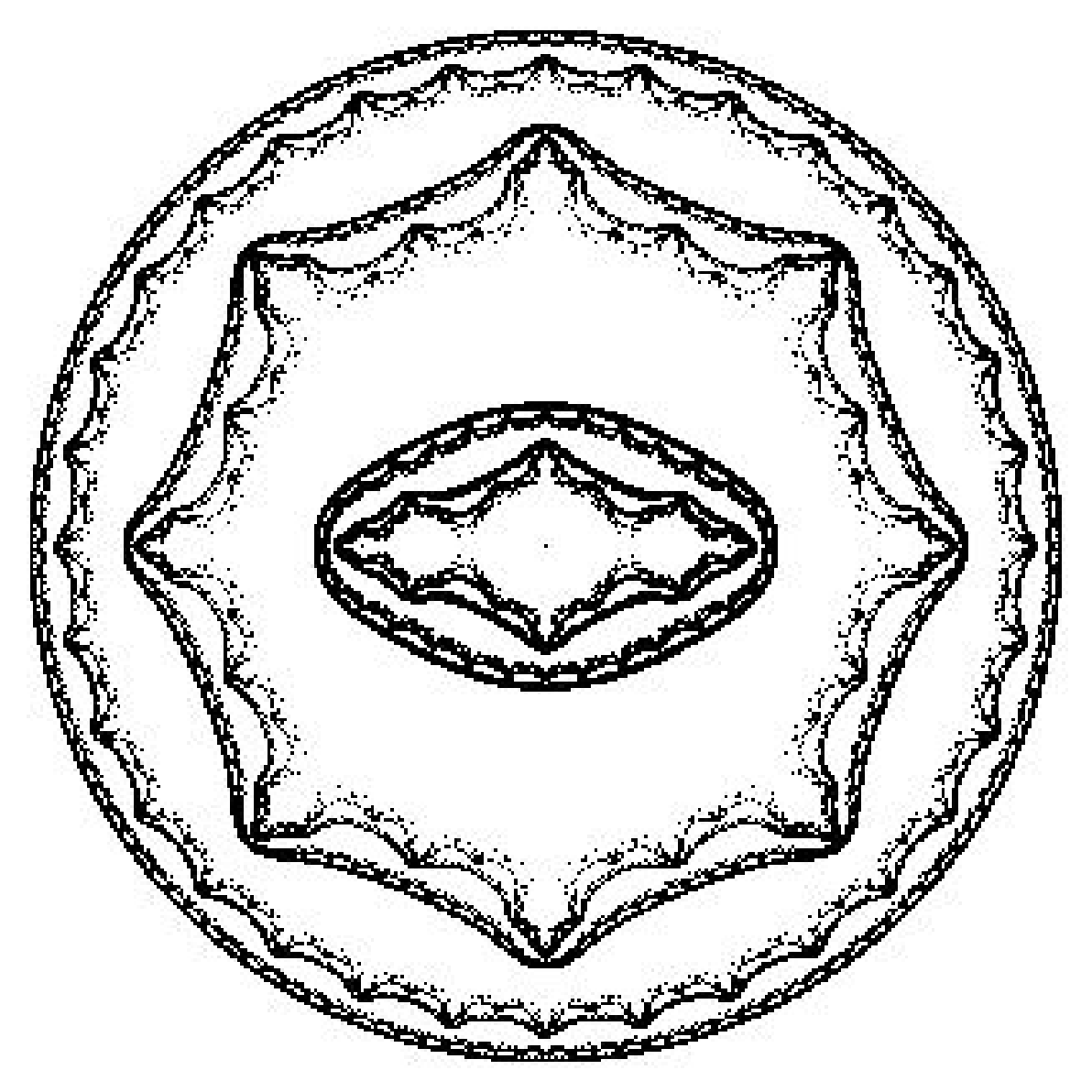}
\label{fig:z2-1z2/4}
\end{figure}

\end{document}